\def\rr{{\mathbb R}}
\def\rn{{{\rr}^n}}
\def\nn{{\mathbb N}}
\def\fz{\infty}
\def\az{\alpha}
\def\dist{{\mathop\mathrm{\,dist\,}}}
\def\loc{{\mathop\mathrm{\,loc\,}}}
\def\lz{\lambda}
\def\dz{\delta}
\def\ez{\epsilon}
\def\kz{\kappa}
\def\bz{\beta}
\def\gz{{\gamma}}
\def\vz{\varphi}
\def\wz{\widetilde}
\def\bint{{\ifinner\rlap{\bf\kern.25em--}
\int\else\rlap{\bf\kern.45em--}\int\fi}\ignorespaces}
\def\bbint{{\ifinner\rlap{\bf\kern.35em--}
\hspace{0.078cm}\int\else\rlap{\bf\kern.45em--}\int\fi}\ignorespaces}
\def\esup{\mathop\mathrm{\,esssup\,}}
\def\osc{ \mathop \mathrm{\, osc\,} }
\newtheorem{thm}{Theorem}[section]
\newtheorem{lem}[thm]{Lemma}%[section]     %@@!!@@!!
\newtheorem{cor}[thm]{Corollary}%[section]    %@@!!@@!!
\numberwithin{equation}{section}
\theoremstyle{remark}
\newtheorem{rem}[thm]{Remark}%[section]    %@@!!@@!!
\def\bint{{\ifinner\rlap{\bf\kern.35em--}
\int\else\rlap{\bf\kern.45em--}\int\fi}\ignorespaces}
\title[Sobolev regularity for  inhomogeneous $\infty$-Laplace equation]
{Some   sharp Sobolev regularity  for  inhomogeneous $\infty$-Laplace equation in plane}
\author{Herbert Koch,  Yi Ru-Ya Zhang and Yuan Zhou}
\address{H. Koch: Institute of Mathematics, Bonn University, Endenicher Allee 60, Bonn 53115, Germany}
\email{koch@math.uni-bonn.de}
\address{Y. Zhang: Hausdorff Center for Mathematics, Endenicher Allee 62, Bonn 53115, Germany}
\email{yizhang@math.uni-bonn.de}
\address{Y. Zhou: Department of Mathematics, Beihang University, Beijing 100191, P.R. China}
\email{yuanzhou@buaa.edu.cn}
\date{\today}
\begin{document}

 \allowdisplaybreaks
\arraycolsep=1pt
\maketitle
\begin{center}
\begin{minipage}{13cm}
{\bf Abstract.}
Suppose  $\Omega\Subset \mathbb R^2$ and  $f\in BV_\loc(\Omega)\cap C^0(\Omega)$ with $|f|>0$ in $\Omega$.
Let $u\in C^0(\Omega)$ be a viscosity solution to the  inhomogeneous $\infty$-Laplace equation
$$
-\Delta_{\infty} u :=-\frac12\sum_{i=1}^2(|Du|^2)_iu_i= -\sum_{i,j=1}^2u_iu_ju_{ij}   =f \quad {\rm in}\ \Omega.
$$
The following are proved in this paper.
\begin{enumerate}
\item[(i)] For  $ \alpha > 3/2$,  we have $|Du|^{\alpha}\in W^{1,2}_\loc(\Omega)$,
which is (asymptotic) sharp when $ \alpha  \to 3/2$.  Indeed, the function $w(x_1,x_2)=-x_1^ {4/3} $
is a viscosity solution to $-\Delta_\fz w=\frac{4^3}{3^4}$ in $\rr^2$. For any $p> 2$,
  $|Dw|^\alpha \notin W^{1,p}_\loc(\rr^2)$ whenever    $\alpha\in(3/2,3-3/p)$.

\item[(ii)] For  $ \alpha \in(0, 3/2]$ and $p\in[1, 3/(3-\alpha))$,  we have
  $|Du|^{\alpha}\in W^{1,p}_\loc(\Omega)$, which is sharp when $p\to 3/(3-\alpha)$. Indeed,
  $ |Dw|^\alpha \notin W^{1,3/(3-\alpha)}_\loc(\rr^2)$.

\item[(iii)] For  $ \ez > 0$,  we have   $|Du|^{-3+\ez}\in L^1_\loc(\Omega )$, which is sharp when $\epsilon\to0$.
Indeed,  $|Dw|^{-3}  \notin L^1_\loc(\rr^2)$.

\item[(iv)] For  $ \alpha > 0$,  we have $$-(|Du|^{\alpha})_iu_i= 2\alpha|Du|^{{ \alpha-2}}f \ \mbox{ almost everywhere in $\Omega$}.$$

\end{enumerate}
Some quantative bounds are also given. %The function $w(x_1,x_2)=-x_1^ {4/3} $
%as a viscosity solution to $-\Delta_\fz w=\frac{4^3}{3^4}$ in $\rr^2$, clarifies the above sharpness:
% $|Dw|^{-3}  \notin L^1_\loc(\rr^2)$;$ |Dw|^\alpha \notin W^{1,3/(3-\alpha)}_\loc(\rr^2)$ whenever    $\alpha\in(0,3/2]$;
% and for any $p> 2$,
%  $|Dw|^\alpha \notin W^{1,p}_\loc(\rr^2)$ whenever    $\alpha\in(3/2,3-3/p)$.

\end{minipage}
\end{center}
\tableofcontents

\section{Introduction}

Let $n\ge 2$ and $ \Omega $ be a bounded domain (open connected subset) of $\rn$.
 In 1960's, Aronsson \cite{a1,a2,a3,a4}  derived  the $\fz$-Laplace equation
  \begin{equation}\label{e1.d1}
-\Delta_\infty u:=-\frac12 (|Du|^2)_iu_i=-u_iu_ju_{ij}  =0
\quad {\rm{in}}\quad \Omega
\end{equation}   as the Euler-Lagrange equation when   absolutely minimizing     the $L^\infty$-functional
$$F_\fz(u,\Omega)=
  \esup_{  \Omega} \frac12|Du|^2.
$$
Viscosity solutions to \eqref{e1.d1}  as defined by Cradall et al  \cite{CIL} are called $\fz$-harmonic functions;
while by Aronsson \cite{a1,a2,a3,a4},
 an absolute minimizer  is  a local Lipschitz function which  minimizing $F_\fz(v,V)$ in any domain  $V\Subset \Omega $.
In this paper,
  $v_i$ denotes $\frac{\partial v} {\partial x_i}$ if   $v\in C^1(\Omega)$, and
 the distributional  derivation  in direction $x_i$  if   $v\in L^2_\loc(\Omega)$,  and
 $v_{ij}= \frac{\partial^2 v} {\partial x_i\partial x_j}$ if    $v\in C^2(\Omega)$.
Write  $Dv=(v_i)_{i=1}^n$, $D^2v=(v_{ij})_{i,j=1}^n$ and $D^2vDv=(v_{ij}v_j)_{i=1}^n$.
We always use the Einstein summation convention, that is, $v_iw_i=\sum_{i,j=1}^nv_iw_i$ for  vectors $(w_i)_{i=1}^n$
and $(v_i)_{i=1}^n$.

Jensen  \cite{j1993}  identified $\fz$-harmonic functions with absolute minimizers, and moreover,
  established  their existence and uniqueness under Dirichlet boundary.
Their regularity  then
becomes the main issue in this direction.  By \cite{j1993}, they   are always  local Lipschitz, and hence, by Rademacher's Theorem,  are differentiable almost everywhere.
 Crandall-Evans \cite{ce} proved their linear approximation property at each point, which means that
 for each sequence converging to $0$, one can find a subsequence admitting a tangential plane along it.
Moreover, for planar  $\fz$-harmonic functions  $u$,
 via a key observation from planar topology Savin \cite{s05} proved   their interior  $C^1$-regularity;
 later, the interior $C^{1,\,{ \alpha}}$-regularity with $0<\alpha<<1/3$ was established  by
 Evans-Savin \cite{es08}
and the boundary  $C^1$-regularity by Wang-Yu \cite{wy}.
Recently, we \cite{kzz} obtained  the Sobolev $W^{1,2}_\loc$-regularity of $|Du|^{ \alpha}$ for ${ \alpha}>0$,
which is sharp when $ { \alpha}\to0$; moreover, we proved that the distributional determinant $-\det D^2u$ is a nonnegative Radon measure.
For $n$-dimensional $\fz$-harmonic functions with $n\ge3$,
Evans-Smart \cite{es11a,es11b} obtained their  everywhere differentiability via
an approximation approach by exponential harmonic functions.%  by Evans \cite{e03,ey04}.

On the other hand,
 Lu-Wang \cite{lw} considered the   inhomogeneous $\fz$-Laplace equation
\begin{equation}\label{inhom infty equ}
-\Delta_{\infty} u := -\frac12(|Du|^2)_iu_i=-u_iu_ju_{ij}   =f \quad {\rm in}\ \Omega,
\end{equation}
where   $f\in C^0(\Omega)$. Viscosity solutions to  \eqref{inhom infty equ}  are defined as in \cite{CIL}.
Assuming  that $f$ is bounded and  $|f|>0$,
   for any $g\in C^0(\partial \Omega)$  Lu-Wang   proved
   the existence and  uniqueness of  viscosity solutions $u\in C^0(\overline\Omega)$ to  \eqref{inhom infty equ} so that $u=g$ on $\partial \Omega$.
   We summarize in Section 2 the existence,  uniqueness and also maximum principle  used in current paper. %for equation \eqref{inhom infty equ}
   But when $f$ changes sign, a counter-example was constructed in \cite{lw}  to show that the uniqueness may fail.
Under $f \ge0$ or $f\le0$, the uniqueness   is  still open.
Similar results for  inhomogeneous normalized $\fz$-Laplace equation were established
in  \cite{lw2,pssw,as12} via different approaches.

The regularity of viscosity solutions  to \eqref{inhom infty equ} is
far from understood.
If  $ f\in C^0(\Omega)$, viscosity solutions to \eqref{inhom infty equ} are known to be local  Lipschitz; see \cite{lw,l} and see also Lemma \ref{lipu} below for a quantative estimate. Lindgren \cite{l}  obtained their linear approximation property.
Assuming additionally $f\in  C^{0,1}(U)$,   everywhere differentiability   was established by
Lindgren \cite{l} (see also  \cite{lmz}) via the approach of Evans-Smart \cite{es11b}.

  The main purpose of this paper is to prove the following Sobolev regularity for inhomogeneous  $\fz$-Laplace equations \eqref{inhom infty equ} in any domain  $ \Omega\Subset\mathbb R ^2$.
We say that $f\in BV_\loc(\Omega)$ if  for any $U\Subset\Omega$, we have
$$
\|f\|_{BV(U)}:=\sup\left \{\int_U f\Phi^i_i\,dx:\ \Phi=(\Phi^1,\Phi^2)\in C_c^1(U;\mathbb R^2),\ \|\Phi\|_{L^\fz(U)}\le1\right\}<\fz.$$
%For a given ball $B=B(x,R)$ and $\lambda>0$, we write $\lambda B=B(x,\lambda r)$; $\overline V$ means the closure of a open set $V$;
%the notation $V\Subset \Omega$ means that $\overline V$ is compact and $\overline V\subset \Omega$.
\begin{thm} \label{mainthm}
Suppose $\Omega\Subset\mathbb R^2$   and  $f\in BV_\loc (\Omega)\cap C^0(\Omega)$ with  $|f|>0$ in $\Omega$.
Let  $u\in  C^0(\Omega)$ be any viscosity solution to \eqref{inhom infty equ}.
\begin{enumerate}
\item[(i)]  For   $\alpha>3/2$, we have  $|Du|^{\alpha}\in W^{1,2}_\loc(\Omega)$  and,  $\forall B:=B(x,R)\subset 2B\Subset\Omega$,
\begin{align}\label{ex01}
  \int_B |D|Du|^{\alpha}|^2\,dx
& \le C(\alpha)\frac1{R^2}\int_{2B}|Du|^{2\alpha}\,dx\nonumber\\
&\quad+ C(\alpha)\| f\| _{BV(2B)}\left [ \frac1R\| u\|_{C^0(2\overline B)}+(R\|f\|_{C^0(2\overline B)})^{1/3}\right ]^{ { 2\alpha-3 }}.
\end{align}
If  $ f\in W^{1,1}_\loc(\Omega)$  additionally, then  we have
\begin{align}\label{strongest}
  \int_\Omega|D|Du|^{\alpha}|^2\xi^2\,dx
  \le C(\alpha)&\int_\Omega |Du|^{2\alpha}(|D\xi|^2+|D^2\xi||\xi|)\,dx \nonumber\\
  &  \quad\quad+C(\alpha)\left |\int_\Omega f_i u_i|Du|^{2\alpha-4}\xi^2\,dx\right|\quad \forall \xi\in C_c^2(\Omega).
\end{align}

\item[(ii)] For $0<  \alpha\le 3/2$ and $1\le p<   3/(3- \alpha) $,
  we have
 $   |Du|^{\alpha}  \in W^{1,p}_\loc(\Omega) $.

 \item[(iii)]  For   $\alpha>3/2$, we have  $|Du|^{2\alpha-6}\in L^1_\loc(\Omega)$ and
 $$  |Du|^{   2\alpha-6}\le\frac1{\alpha} \frac1{f^2}|D |Du|^{\alpha}|^2  \quad{\rm a.\,e.\ in }\ \Omega.$$

\item[(iv)] We have  \begin{equation}\label{pw}-(|Du|^{\alpha})_iu_i=\alpha|Du|^{{ \alpha-2}}f\quad{\rm a.\,e.\ in}\ \Omega \quad\forall \alpha>0\end{equation}
and
\begin{equation}\label{chain}|Du|^\tau D|Du|^\alpha=\frac{\alpha}{\alpha+\tau}D|Du | ^{ \alpha+\tau }\quad{\rm a.\,e.\ in}\ \Omega\quad \forall  \alpha,\tau>0.
\end{equation}
\end{enumerate}
\end{thm}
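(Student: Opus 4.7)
My plan is to prove all four parts simultaneously by establishing a priori estimates for smooth approximations and then passing to the limit. Mollify $f$ to $f_\varepsilon \in C^\infty$ with $|f_\varepsilon| \geq c > 0$ on compact subsets; the existence theory summarized in Section~2 combined with the regularization technique of Lindgren \cite{l} yields smooth solutions $u_\varepsilon$ to $-\Delta_\infty u_\varepsilon = f_\varepsilon$ with $|Du_\varepsilon|>0$, and Lemma~\ref{lipu} provides a uniform local Lipschitz bound. All subsequent estimates are derived for these $u_\varepsilon$ and then passed to the limit $\varepsilon\to 0$.

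\textbf{The Bernstein--Caccioppoli estimate for (i).} The heart of the matter is \eqref{strongest}. For smooth $u=u_\varepsilon$ with $|Du|>0$ I would differentiate $-u_iu_ju_{ij}=f$ in $x_k$, multiply by $u_k|Du|^{2\alpha-4}\xi^2$, sum in $k$, and integrate by parts. The planar structure enters through the orthonormal frame $(\nu,\nu^\perp)=(Du/|Du|,\,(Du)^\perp/|Du|)$, in which $u_{\nu\nu}=-f/|Du|^2$ is pinned down directly by the equation; only the transverse Hessian entries $u_{\nu\nu^\perp},\,u_{\nu^\perp\nu^\perp}$ need to be handled algebraically. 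The crucial cancellation puts $|D|Du|^\alpha|^2\xi^2$ on the left-hand side with a coefficient that remains positive precisely when $\alpha>3/2$, which is the algebraic origin of the sharp threshold and is consistent with the explicit scaling of $w=-x_1^{4/3}$. The remaining boundary terms are $\int|Du|^{2\alpha}(|D\xi|^2+|D^2\xi||\xi|)$ and $\int f_iu_i|Du|^{2\alpha-4}\xi^2$; for $f\in BV_{\loc}$ the latter is bounded by $\|f\|_{BV(2B)}\cdot\|u_i|Du|^{2\alpha-4}\xi^2\|_\infty$ via the distributional duality between $BV$ and continuous functions, and the quantitative Lipschitz bound of Lemma~\ref{lipu} then turns this into \eqref{ex01} once $\xi$ is chosen to cut off $B$ inside $2B$.

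\textbf{Parts (ii), (iii), (iv).} Part (iv) is a pointwise consequence of the equation together with the Sobolev chain rule: a.e.\ one has $(|Du|^2)_iu_i=-2f$, so $(|Du|)_iu_i=-f/|Du|$ where $|Du|>0$, and multiplying by $\alpha|Du|^{\alpha-1}$ gives the stated identity; \eqref{chain} then follows from the standard chain rule once (i)--(ii) provide enough Sobolev regularity. Part (iii) is immediate from (iv) and Cauchy--Schwarz: $|D|Du|^\alpha|^2=\alpha^2|Du|^{2\alpha-2}|D|Du||^2\geq \alpha^2|Du|^{2\alpha-6}f^2$ a.e., so $|Du|^{2\alpha-6}$ is locally integrable since $|f|$ is locally bounded below; in particular $|Du|>0$ a.e. For (ii), fix any $\beta>3/2$ and factorize
\[
|D|Du|^\alpha|^p = \left(\frac{\alpha}{\beta}\right)^p |Du|^{p(\alpha-\beta)}|D|Du|^\beta|^p;
\]
H\"older with exponents $(2/p,\,2/(2-p))$, which is allowed since $p<3/(3-\alpha)\leq 2$ for $\alpha\leq 3/2$, gives
\[
\int|D|Du|^\alpha|^p \leq C\Bigl(\int|D|Du|^\beta|^2\Bigr)^{p/2}\Bigl(\int|Du|^{2p(\alpha-\beta)/(2-p)}\Bigr)^{(2-p)/2},
\]
in which the first factor is finite by (i) and the second is finite by (iii) provided $2p(\beta-\alpha)/(2-p)<3$, which (taking $\beta$ sufficiently close to $3/2$) holds iff $p<3/(3-\alpha)$.

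\textbf{Main obstacle.} The crux is the 2D Bernstein calculation in (i): one must extract just enough cancellation to keep the coefficient of $|D|Du|^\alpha|^2\xi^2$ positive for every $\alpha>3/2$, matching the example's sharpness, and any looser manipulation would yield a strictly larger threshold. A secondary subtlety is justifying the integration by parts against the $BV$ function $f$: since $|Du|$ may vanish (as it does on $\{x_1=0\}$ for the extremal $w$), the formal test function $u_i|Du|^{2\alpha-4}\xi^2$ must be handled via truncation, e.g.\ replacing $|Du|^{2\alpha-4}$ by $(|Du|^2+\eta)^{\alpha-2}$, deriving the corresponding bound, and sending $\eta\to 0$ with the aid of the $C^1$-regularity from \cite{l}, Lemma~\ref{lipu}, and the cutoff~$\xi$.
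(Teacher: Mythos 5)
Your overall framework (mollify $f$, regularize, derive Caccioppoli-type estimates, pass to the limit) matches the paper's strategy in spirit, and your deduction of (ii) from (i) and (iii) by H\"older is essentially the calculation in Remark~1.5(ii) of the paper, and your pointwise derivation of (iii) from (iv) is correct. However, there are several genuine gaps, two of which are fatal as written.

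\textbf{First}, the claim that ``the regularization technique of Lindgren yields smooth solutions $u_\varepsilon$ to $-\Delta_\infty u_\varepsilon = f_\varepsilon$ with $|Du_\varepsilon|>0$'' is false. Viscosity solutions to $-\Delta_\infty v = g$ with $g\in C^\infty$ are \emph{not} smooth: the extremal $w(x)=-x_1^{4/3}$ already shows $C^{1,1/3}$ is the best one can hope for, and $|Dw|$ vanishes on a line. The Bernstein/divergence-structure calculation requires genuinely smooth approximants, which forces the additional elliptic regularization $-\Delta_\infty u^\varepsilon - \varepsilon\Delta u^\varepsilon = f^\varepsilon$ (the paper's \eqref{infty ezf}). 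This extra $\varepsilon\Delta$ term produces the extra terms $\varepsilon(\Delta u^\varepsilon)^2$, etc.\ in the identity $(-\det D^2u^\varepsilon)|Du^\varepsilon|^2 = |D^2u^\varepsilon Du^\varepsilon|^2 + \varepsilon(\Delta u^\varepsilon)^2 + f^\varepsilon \Delta u^\varepsilon$, all of which must be tracked and shown to vanish. You cannot skip this.

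\textbf{Second and more seriously}, you write ``All subsequent estimates are derived for these $u_\varepsilon$ and then passed to the limit $\varepsilon\to 0$'' as if this were routine, but identifying the limit is the hardest part of the proof. Uniform $W^{1,2}_{\loc}$ bounds on $|Du^\varepsilon|^\alpha$ only give a weakly convergent subsequence, converging to some $h$; one must then prove $h = |Du|^\alpha$ a.e. Since for inhomogeneous $\infty$-Laplace equations $u$ is not known to be $C^1$ (this is precisely the Conjecture in Remark~1.2), there is no continuity of $Du$ to fall back on, and one cannot get $Du^\varepsilon\to Du$ a.e.\ for free. The paper resolves this with the ``integral flatness'' estimate (Lemma~\ref{flat}) and a delicate Lebesgue-point argument (Lemma~\ref{strong converge}), which is wholly absent from your proposal. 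Relatedly, your derivation of (iv) is circular: you assert that ``a.e.\ one has $(|Du|^2)_iu_i=-2f$'' as a ``pointwise consequence of the equation,'' but $u$ solves the equation only in the viscosity sense, not pointwise a.e.; establishing the pointwise a.e.\ PDE is itself the content of (iv), and it is proved via the convergence machinery you skipped. Finally, for $f$ merely in $BV_{\loc}\cap C^0$, the paper shows (Remark~3.5) that one cannot get uniform $W^{1,\infty}_{\loc}$ bounds on the $\varepsilon$-regularized solutions $u^\varepsilon$, yet a uniform Lipschitz bound is exactly what is needed to pair $Df^\varepsilon$ (a measure with only uniform mass bounds) against $u^\varepsilon_i|Du^\varepsilon|^{2\alpha-4}\xi^2$. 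The paper therefore introduces a second approximation layer in Section~5, solving $-\Delta_\infty \hat u^\delta = f^\delta$ in the viscosity sense and invoking the already-proved theorem for smooth $f$, precisely because $\hat u^\delta$ has a uniform Lipschitz bound by Lemma~\ref{lipu}. Your single-pass scheme cannot reach the $BV$ hypothesis.
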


Below, we give an example to clarify the sharpness in Theorem \ref{mainthm}. We also state
a Gehring type conjecture on the higher integrality of $|D|Du|^\alpha|$ when $\alpha>3/2$,
and moreover,   fully describe viscosity solutions to 1-dimensional
inhomogeneous $\fz$-Laplace equations.

\begin{rem} \begin{enumerate}
 \item[(i)]
Note that the function $w(x_1,x_2)=-x_1^{4/3} $  satisfies
 $$- \Delta_\fz w=\frac{4^3}{3^4}\ \mbox{in $\rr^2 $}$$     in  viscosity sense.
A direct calculation gives that
$$
 |Dw (x )| = C |x_1|^{1/3}  \ {\rm and}\
   |D|Dw|^\alpha  (x )| =C(\alpha) |x_1|^{-(3-\alpha)/3} \quad\forall x\in\rr^2 \setminus\{0\}.$$

The regularity of $w$ leads to the sharpness in Theorem \ref{mainthm}. Precisely,
    Theorem \ref{mainthm}  (iii) is sharp   in the sense that
     $|Dw|^{2\times ( 3/2)-6}=C |x_1|^{-1} \notin L^1_\loc(\rr^2)$.
   When $\alpha\in(0,3/2]$,
   Theorem \ref{mainthm} (ii) is   sharp   in the sense that
 $|D|Dw|^\alpha|^{3/(3-\alpha)}=C(\alpha)|x_1|^{-1}\notin L^1_\loc(\rr^2)$.
% When $\alpha>3/2$ and $\alpha\to3/2$,
 Theorem \ref{mainthm} (i) is (asymptotic) sharp   in the sense that, for any  $p> 2$,
  $|D|Dw|^\alpha|^p=C(\alpha)|x_1|^{- (3-\alpha)p/3}\notin L^1_\loc(\rr^2)$ whenever    $\alpha\in(3/2,3-3/p)$, that is, $(3-\alpha)p/3\ge 1$.

\item[(ii)] For each fixed $\alpha>3/2$, note that
 $ |Dw|^\alpha \in W^{1,p}_\loc(\rr^2)$ for any $p\in(2,3/(3-\alpha))$ if $\alpha<3$ or for any $p\in(2,\fz]$ if $\alpha\ge3$.
 Comparing with Theorem \ref{mainthm} (i), we pose the following Gehring type conjecture.

\noindent {\bf Conjecture:} Suppose $\Omega\Subset\mathbb R^2$   and  $f\in BV_\loc (\Omega)\cap C^0(\Omega)$ with  $|f|>0$ in $\Omega$.
For each $\alpha>3/2$  there exists some $\ez_\alpha>0$ such that $\ez_\alpha\to0$ and
$|Du|^{\alpha}\in W^{1,2+\ez_\alpha}_\loc(\Omega)$ for all viscosity solutions $u$ to \eqref{inhom infty equ}.

  If this conjecture is true, then
  one would conclude the  $C^1$- and $C^{1,\gz}$-regularity for some $\gz>0$
  of  viscosity solutions  to \eqref{inhom infty equ}, which remains open now.
  %\end{enumerate}
%\end{rem}
%
%The critical function $w$ given in Remark 1.2 is essentially in dimension 1; below we describe viscosity solutions to inhomogenous $\infty$-Laplace equation in dimension 1.
%\begin{rem}\label{mainthm2}
 \item[(iii)] The
 function $w$  given in (i) is essentially of dimension 1. Below we fully describe viscosity solutions to
 inhomogenous $\infty$-Laplace equation in dimension 1:
 \begin{equation}\label{1d}-u'u''u'=f\quad\mbox{ in  $I $.}
 \end{equation}
  where $I\Subset \rr$ is any open interval.

Without loss of generality, let $I=(0,1)$, and  $f\in   C^0(\overline I)$ with $|f|>0$ in $\overline I$.
If $u\in C^0(\overline I)$  is a viscosity solution to \eqref{1d}, then
 $$
u (t)=u(0)+\int_{0}^t\left[\int_{0}^s[-3f(r)]\,dr-c\right]^{1/3}\,ds\quad\forall t\in I,$$
 where $c\in \rr$ is uniquely determined by the value $u(1)$.

 From above formula one can see that $u\in C^{1,1/3} (I)\cap W^{2,p}_\loc(I)\cap C^2(I\setminus I_0)$ with $p\in[1,3/2)$, and $u$ is strictly convex if $f<0$ and strictly concave if $f>0$. Here the set $I_0:=\{t\in I, u'(t)=0\}$ contains at most  one point, and  if  $I_0$ contains some $t_0\in I$, then    \begin{equation*}  \lim_{s\to t_0} \frac{3[u (s)-u(t_0)]}{4(s-t_0)^{4/3}}= \lim_{s\to t_0} \frac{u'(s)}{(s-t_0)^{1/3}}= [-3f(t_0)]^{1/3}.
 \end{equation*}
 In particular, the conjecture in (ii) is true in dimension 1.

 Moreover, $|u'|^{-1}\in L^{p}_\loc(I)\cap   C^1(I\setminus I_0)$ for any $p\in(0,3)$;
 $|u'|^\alpha\in W^{1,p}_\loc(I)\cap C^1(I\setminus I_0)$
whenever $\alpha\in(0,3)$ and $p\in[1,3/(3-\alpha))$;
$|u'|^3\in W^{1,\fz}_\loc(I)\cap C^1(I\setminus I_0)$;  $|u'|^\alpha\in C^1(I)$   whenever
$\alpha>3$. We also have
 $(|u'|^\alpha)' =-\alpha |u' |^{\alpha-4}u'  f $  everywhere in $ I\setminus I_0$ whenever $\alpha\in(0,3]$,
  and   everywhere in $I$ whenever $\alpha\in(3,\fz)$.
  In particular, $-u'' |u' |^2=f  $  everywhere in $ I\setminus I_0$.

 % From this, one has the following.
%
%  (i)  $u\in C^{1,1/3} (I)$  is strictly convex if $f<0$ and strictly concave if $f>0$.
%
% if  $I_0$ contains some $t_0\in I$, then    \begin{equation*}  \lim_{s\to t_0} \frac{3[u (s)-u(t_0)]}{4(s-t_0)^{4/3}}= \lim_{s\to t_0} \frac{u'(s)}{(s-t_0)^{1/3}}= [-3f(t_0)]^{1/3}.
%\end{equation*}
% Moreover,
%    $u\in W^{2,p}_\loc(I)\cap C^2(I\setminus I_0)$ whenever  $p\in[1,3/2)$.
%
%(ii)
% $|u'|^\alpha\in W^{1,p}_\loc(I)\cap C^1(I\setminus I_0)$
%whenever $\alpha\in(0,3)$ and $p\in[1,3/(3-\alpha))$;
%$|u'|^3\in W^{1,\fz}_\loc(I)\cap C^1(I\setminus I_0)$;  $|u'|^\alpha\in C^1(I)$   whenever
%$\alpha>3$.
%
%(iii)    $|u'|^{-1}\in L^{p}_\loc(I)\cap   C^1(I\setminus I_0)$ for any $p\in(0,3)$.
%
%(iv)  $(|u'|^\alpha)' =-\alpha |u' |^{\alpha-4}u'  f $  everywhere in $ I\setminus I_0$ whenever $\alpha\in(0,3]$,
%  and   everywhere in $I$ whenever $\alpha\in(3,\fz)$.
%  In particular, $-u'' |u' |^2=f  $  everywhere in $ I\setminus I_0$.
 \end{enumerate}
\end{rem}

Next, we compare   Sobolev regularity  in the case $|f|>0$  with that in  the case $f\equiv0$.
\begin{rem}\label{cpi}
The Sobolev regularity for viscosity solutions to \eqref{inhom infty equ} given in
  Theorem \ref{mainthm} and the sharpness above are very different from that for planar $\infty$-harmonic functions  (that is, in the case $f\equiv0$) by \cite{kzz} as stated above.
When considering   $W^{1,2}_\loc$-regularity for $|Du|^\alpha$, the role of $\alpha=3/2$ in Theorem \ref{mainthm} plays the role of $\alpha=0$ for $\fz$-harmonic functions.   When  $0<\alpha\le 3/2$,  Theorem \ref{mainthm} (ii)\&(iii)  have their own interest, and we have to   treat them separately.  Moreover,
  consider  $\wz w^\ez(x)=x_1^{4/3}-\ez x_2^{4/3}$ with $\ez\in(-1,1)$, which satisfies
  $$-\Delta_\fz \wz w^\ez=(1-\ez^3)\frac{4^3}{3^4}>0\quad\mbox{ in $\rr^2$}$$
 in viscosity sense.  Note that $-\det D^2\wz w^\ez $ is nonnegative when $\ez>0$ and nonpositive when $\ez<0$.
This reveals that the distributional determinant for viscosity solutions to  \eqref{inhom infty equ} may change sign,
 and hence, behave much more complicated than $\fz$-harmonic functions.

\end{rem}

 We also list some relations between (i) to (iv) of Theorem \ref{mainthm}.

\begin{rem}\label{relthm}
\begin{enumerate}
\item[(i)]

 Theorem \ref{mainthm} (iii) follows from Theorem \ref{mainthm} (i)\&(iv); hence, to obtain Theorem \ref{mainthm}, it suffices to prove Theorem \ref{mainthm} (i),(ii)\&(iv).
    Indeed,
by \eqref{pw} with $\alpha=2$ and $|f|>0$ in $\Omega$, we know that $|Du|>0$ a.\,e. in $\Omega$. For any $\alpha>3/2$,
 by \eqref{pw} again, we have
$$  |Du|^{   2\alpha-6}=|Du|^{-2} \frac1{\alpha^2f^2}[(|Du|^{\alpha})_iu_i]^2\le\frac1{\alpha^2} \frac1{f^2}|D |Du|^{\alpha}|^2  \quad{\rm a.\,e.\ in }\ \Omega.$$
 By Theorem \ref{mainthm} (i), we conclude  $|Du|^{   2\alpha-6}\in L^1_\loc(\Omega)$, that is, Theorem \ref{mainthm} (iii).

 \item[(ii)]

  For $0<\alpha\le 3/2$ and $1\le p<3/(3-\alpha)$,   no quantative estimates for $|D  |   Du|  ^{  \alpha  } |^p$ is given in Theorem \ref{mainthm} (ii).
Via  Theorem \ref{mainthm} (i)\&(iv), there is a  pointwise estimate
 for $|D  |   Du|  ^{  \alpha  } |^p$ as follows:
 letting $ \bz\in(3/2,3/p-3/2+\az)$, by $3+(\az-\bz)p/(2-p) > 3/2$ and H\"older's inequality we have
  \begin{align*}
|D  |   Du|  ^{  \alpha  } |^p & = (\beta/2)^p |Du|^{(\az-\beta)p}|D|Du|^\beta|^p\\
&\le
C(\az,\bz,p) [|D|Du|^\bz|^2+|Du|^{2(\az-\bz)p/(2-p)}] \\
&\le
C(\az,\bz,p) [|D|Du|^\bz|^2+\frac1{f^2} |D|Du|^{(\az-\bz)p/(2-p)+3}|^2]\quad \mbox{ a.\,e. in $\Omega$. }
\end{align*}

\end{enumerate}
\end{rem}

Now we sketch the ideas  for the proof of   Theorem \ref{mainthm}.  Up to considering $-u$ and $-f$,  in the sequel we always assume $f\in BV_\loc(\Omega)\cap C^0(\Omega)$ and $f>0$ in $\Omega$.
Given arbitray $U\Subset\Omega$, write  $\ez_U=\frac14\min\{\dist(U,\partial \Omega),1\}$, and
let  $ f^\ez\in C^\fz(  U) $  with $\ez\in(0,\ez_U]$ be the standard smooth mollifications of $f$.

In Section 3, as motivated by Evans (see \cite{e03,ey04,es11a,es11b,kzz}) in the case  $ f\equiv 0$ and   by \cite{l,lmz}  in  the case $ f\in C^{0,1}(\Omega)$,
we consider the following approximation to equation \eqref{inhom infty equ}:
 For $\ez\in(0,\ez_U]$,  let $ u^\ez\in C^\fz(U)\cap C(\overline U)$ be a solution to the equation
$$-\Delta_\fz u^\ez-\ez\Delta u^\ez=f^\ez\ {\rm in }\ U;\ u^\ez|_{\partial U}=u|_{\partial U}.$$
Recall that a uniform $C^0(\overline U)$-estimate  and a uniform boundary regularity estimate  for $u^\ez$ were   established in \cite{l,lmz}; see Lemma \ref{maxandboundary}.
Assuming $f\in W^{1,q}_\loc(\Omega)$ with $q\in(1,\fz]$ in additional, and
 observing in  Lemma \ref{identity1} the crucial
 identity
\begin{equation}\label{identity}|D^2u^\ez  Du^\ez|^2= - \ez  (\Delta u^\ez)^2-f^\ez\Delta u^\ez -|Du^\epsilon|^2\det D^2u^\ez \quad {\rm \ in}\ U
\end{equation}
(see  also \cite{kzz} when $f\equiv0$),
we   establish the following  uniform Sobolev estimates in Section 3.%:  for $u^\ez$ and $|Du^\ez|^\alpha$
%or $(|Du^\ez|^2+\kz)^{\alpha/2}$, Precisely,
\begin{enumerate}
\item[$\bullet$] By \eqref{identity}, we show in Lemma \ref{Duez} that,  for any ball $ B\Subset U$, the
   $L^2(B)$-norms  of $|D|Du^\ez|^2| |u^\ez|+|u^\ez|^3$ are uniform bounded in $\ez>0$; see Section 6 for the proof.
Together with Sobolev's imbedding and $f\in W^{1,q}_\loc(\Omega)$, this
 implies that for any $p\in[1,\fz)$,  $u^\ez\in W^{1,p}_\loc (U)$ uniformly in $\ez>0$; see Lemma \ref{uniform Sobolev}.
When $q=\fz$, it was proved in \cite{l,lmz} that   $u^\ez\in W^{1,\fz}_\loc(U)$ uniformly in $\ez>0$,
 which is still unavailable when $q<\fz$, see Remark \ref{sobolevbv}.
\item[$\bullet$]
 %but is still unavailable  under  $f\in W^{1,q}(\Omega)$ with $q<\fz$; see Remark \ref{sobolevbv}.
By \eqref{identity}, we establish  some
Sobolev estimates for $|Du^\ez|^\alpha$
or $(|Du^\ez|^2+\kz)^{\alpha/2}$ which are uniform  in $\ez>0$.
Precisely,  when $\alpha\in \{2\}\cup[3,\fz)$  we  show that
$W^{1,2}(B)$-norms  of  $|Du^\ez|^{\alpha}$ are bounded
  in terms of $L^2 (2B)$-norms of themselves, integral of $f^\ez_i u^\ez_i  |Du^\ez|^{2\alpha-3}$
and some error terms; while when $\alpha\in(3/2,2)\cup(2,3)$,  for $\kz>0$,
similar  $W^{1,2}_\loc$-estimates for $(|Du^\ez|^2+\kappa)^{\alpha/2}$ are established;
see   Lemma \ref{DDuez} whose proof is given in Section 6.
Together with $f\in W^{1,q}_\loc(\Omega)$, we show in Lemma \ref{uniform DDuez} that
when $\alpha\in \{2\}\cup[3,\fz)$,
   $ |Du^\ez|^{\alpha}\in W^{1,2}_\loc(U)$ uniformly in $\ez>0$;
 when $\alpha\in(0,2)\cup(2,3)$, for each $\kappa>0$,
 $|Du^\ez|^2+\kappa)^{\alpha/2}\in  W^{1,2}_\loc(U)$  uniformly in $\ez>0$;
     when   $\alpha\in(3/2,2)\cup(2,3)$,    for any  $V\Subset U$;
 $ \limsup_{\ez\to0}\|D(|Du^\ez|^2+\kappa )^{\alpha/2}\|_{L^{ 2}(V)}$ is uniformly bounded in $\kz\in(0,1)$;
when $\alpha\in(0,3/2]$ and $p\in[1,3/(3-\alpha))$,   for   any $V\Subset U$,
 $ \limsup_{\ez\to0}\|D(|Du^\ez|^2+\kappa )^{\alpha/2}\|_{L^{ p}(V)}$ is uniformly bounded in $\kz\in(0,1)$.

 \item[$\bullet$] By  \eqref{identity}, we establish
 an {\it integral flatness} for $u^\ez$, see Lemma \ref{flat} whose proof is given in Section 6.
 This is crucial to clarify the pointwise limit of $|Du^\ez|^2 $ as $ \ez\to0$ in Section 4.
Here and below, by an integral flatness for $v$ we mean that
for any linear function $P$, the $L^2(B)$-norm  of $\langle Dv, Dv-DP\rangle|Dv|^3$ are controlled by
 $L^2(2B)$-norm  of $|v-P|^2$  times some extra terms (say  $L^2(2B)$-norm of $D|Du^\ez|^2$ and local integration of $f^\ez_i u^\ez_i |Du^\ez|^4$ in the case $u^\ez$).
\end{enumerate}

In Section 4, we prove Theorem \ref{mainthm} and an integral flatness  for $u$ when $f\in W^{1,q}_\loc(\Omega)$ with $q\in(1,\fz]$ additionally.
To this end, we  derive the following  crucial convergence properties %of $u^\ez$, $|Du^\ez|^\alpha$ and $(|Du^\ez|^2+\kz)^{\alpha/2}$
from uniform Sobolev estimates in Section 3.
\begin{enumerate}

\item[$\bullet$] We first derive  $u^\ez\to u$ in $C^0(U)$ as $\ez\to0$ in Lemma \ref{uniform conv} from
  $u^\ez\in W^{1,p}_\loc(U)$ uniformly in $\ez>0$  by Lemma \ref{uniform Sobolev},    the
uniform boundary estimates  in \cite{l,lmz} and the uniqueness  in  \cite{lw}.

 \item[$\bullet$]   We show in Lemma \ref{strong converge} that,
  as $\ez\to0$, $|Du^\ez|^2\to |Du|^2$ in  $L^{p}_\loc(U)$ and weakly in $W^{1,2}_\loc(U)$, and $u^\ez\to u$ in $W^{1,p}_\loc(U)$ for any $p\in[1,\fz)$.
  Since $|Du^\ez|^2 \in W^{1,2}_\loc(U)$  uniformly in $ \ez>0$ as given by
  Lemma \ref{uniform DDuez},  we  know that $|Du^\ez|^2$
    converges to   some function $h$ in $L^{p}_\loc(U)$ for all $p\ge1$ and weakly
  in $W^{1,2}_\loc(U)$ as $\ez\to0$ (up to some subsequence).
Via the integral flatness for $u^\ez$ given by Lemma \ref{flat}, and  some careful but tedious analysis around  Lebesgue points,  we   prove that $|Du|^\alpha =h$ almost everywhere, and hence
   $u^\ez \to u$ in $W^{1,p}_\loc(U)$ for all $p\ge1$.

 \item[$\bullet$]   Moreover, when  $\alpha\ge3$, since $|Du^\ez|^\alpha \in W^{1,2}_\loc(U)$  uniformly in $ \ez>0$ as given by
  Lemma \ref{uniform DDuez}, by $u^\ez\to u$ in $W^{1,p}_\loc(U)$ for all $p\in[1,\fz)$ as $\ez\to0$,
  we show in Lemma \ref{strong converge} that
  $|Du^\ez|^\alpha\to |Du|^\alpha$  in $L^{p}_\loc(U)$ for all $p\ge1$ and weakly
  in $W^{1,2}_\loc(U)$ as $\ez\to0$.
   Similarly, by  Lemma \ref{uniform DDuez}  we also show
   in Lemma \ref{strong converge} that when $\alpha\in(0,2)\cup(2,3)$ and $\kz>0$,
$(|Du^\ez|^2+\kappa)^{\alpha/2}\to  (|Du |^2+\kappa)^{\alpha/2}$ in $L^p_\loc(U)$ for all $p\ge1$ and weakly in $W^{1,2}_\loc(U)$ as $\ez\to0$;
 when $\alpha\in(3/2,2)\cup(2,3)$,
$(|Du |^2+\kappa)^{\alpha/2}\to   |Du | ^{\alpha }$ in $L^p_\loc(U)$ for all $p\ge1$ and weakly in $W^{1,2}_\loc(U)$ as $\ez\to0$;
  when $\alpha\in(0,3/2]$ and $p\in(1,3/(3-\alpha))$,
$(|Du |^2+\kappa)^{\alpha/2}\to   |Du | ^{\alpha }$ in $L^t_\loc(U)$ for all $t\ge1$ and weakly in $W^{1,p}_\loc(U)$ as $\ez\to0$.
%  \item[$\bullet$]
 \end{enumerate}
With the aid of Lemma \ref{strong converge},  we are able to conclude Theorem \ref{mainthm}  from
Sobolev estiamtes of $|Du^\ez|^\alpha $  or $(|Du^\ez|^2+\kz)^{\alpha/2}$ given in Lemma \ref{DDuez}.
 From  Lemma \ref{DDuez} again,  Sobolev convergence in Lemma \ref{strong converge}
  and the integral flatness for $u^\ez$ in Lemma \ref{flat}, we also deduce an integral flatness for $u$  in Lemma \ref{flatu}.

 In Section 5,  we prove Theorem \ref{mainthm} when $f\in BV_\loc(U)\cap C^0(U)$.
In this case,   the above approach fails % to prove Theorem \ref{mainthm},
since  the uniform $W^{1,\fz}_\loc(U)$-estimates of  $u^\ez$
  is unavailable as  indicated by Remark \ref{sobolevbv}.
Note that we do need the uniform $W^{1,\fz}_\loc(U)$-estimates of  $u^\ez$
 to obtain  uniform   $W^{1,2}_\loc(U)$-estimates of $ |Du^\ez|^\alpha $ and hence  to prove Theorem \ref{mainthm}; for example, since $f^\ez$ only have uniform $BV(U)$-estimates, we need
the uniform $W^{1,\fz}_\loc(U)$-estimates of  $u^\ez$  to get uniform estimates of the term
 $\int_U f^\ez_i u^\ez_i |Du^\ez|\xi^2\,dx$  in   Lemma \ref{DDuez},
  and hence, to obtain the uniform $W^{1,2}_\loc(U)$-estimates of  $|Du^\ez|^2$.  Therefore, new ideas are required.

Instead of the above approach, we consider an approximation by  $\fz$-Laplace equations with smooth inhomogeneous terms.
That is,
 for each $\dz\in(0,\ez_U]$,
 let $\hat u^\dz$ be the viscosity solution to
  the approximation equations
\begin{equation*}\label{appin}
-\Delta_{\infty} \hat u^\dz =f^\dz \ \text{in $U$}, \quad \hat u^\dz=u   \ \text{on $\partial U$}.
\end{equation*}
Since $f^\dz$ is smooth, as proved in Section 4, Theorem \ref{mainthm}  and also the flatness in Lemma \ref{flatu} hold for $\hat u^\dz$.
Moreover, by Lemma \ref{lipu}, we have $\hat u^\dz\in C^{0,1}(U)$ uniformly in $\dz>0$.

Recall that, as proven by \cite{lw},   $\hat u^\dz\to u $ in $C^0(\overline U)$ as $\dz\to0$.
Since \eqref{strongest} holds for $\hat u^\dz$,
by estimating   $\int_U\hat u^\dz _i  f^\dz_i |Du|^{2\alpha-3}\xi^2\,dx$
 via $C^{0,1}(U)$-norms of $\hat u^\dz$ and $BV(U)$-norms of $f^\dz$, for any $\alpha>3/2$
we conclude uniform  $W^{1,2}_\loc(U)$-estimates of $|D\hat u^\dz|^{ \alpha}$.
By this  and the integral flatness of $\hat u^\dz$ as given by Lemma \ref{flatu},
we are able to show that  $|D\hat u^\dz|^\alpha  \to |Du|^\alpha$ and
 $\hat u^\dz\to u $ in $W^{1,p}_\loc(U)$ for all $p\ge 1$ as $\dz\to0$, see Lemma \ref{strong stability}.
This allows us to conclude Theorem \ref{mainthm}  from \eqref{strongest}, uniform $C^{0,1}(U)$-estimates of $\hat u^\dz$ and
uniform $BV(U) $-estimates of $f^\dz$. Theorem \ref{mainthm} then follows.

Finally we make some convention.
Denote by $C$ an absolute constant (independent of main parameters) and
 by $C(a,b,\cdots)$ a constant depending the parameters $a, b,\cdots$.
Write $B(x,r)$ for a ball centered at $x$ and with radius $r>0$, $\overline {B(x,r)}$ as the closure of $B(x,r)$,
 and  $CB(x,r)=B(x,Cr)$ for $C>0$.
The notation $V\Subset U$ means that $\overline V$ is compact and $\overline V\subset U$.
We write $\dist (x,  F)=\inf_{y\in F} |x-y|    $ and $\dist (E,  F)=\inf_{x\in E}\dist(x,F)$.
Denote by $C^0(E)$   the collection of continuous functions on
a set $E\subset \mathbb R^2$.
For $k\ge 1$,   $C^k(U)$  consists of functions $u $  on an  open set $U\subset \mathbb R^2$ such that $Du\in C^{k-1}(U)$;
  $C^\fz(U):=\cap_{k\in\nn} C^k(U)$.   Write $C^k_c(U)$  denotes the class of functions  in $C^k (U)$ which compactly supported in $U$.
  For $ k\in \nn\cup\{0\}$ and $\gamma\in(0,1]$, $C^{k,\gz}(U)$ denotes the collection of H\"older continuous function of order $\gz$.
For $p\ge 1$, $L^p(U)$ denotes the $p$-th integrable Lebesgue space; $L^\fz(U)$ as the space of essentially bounded functions.
For $1\le p\le \fz$, $L^p_\loc(U)$ is the collection of functions $v$ such that $v\in  L^p(V)$ for all $V\Subset U$.
For $1\le p\le \fz$, $W^{1,p}(U)$ is the first order $p$-th Sobolev space, that is, the set of functions $v$ on $U$ whose distribbutional derivatives $Dv\in L^p(U)$;
similarly define $W^{1,p}_\loc(U)$.
 We also write  $W^{1,\fz}(U)$ as $C^{0,1}(U)$.  % similarly define $W^{1,p}_c(U)$.

\section{Some facts for inhomogeneous $\fz$-Laplace equations}

We recall several facts about the inhomogeneous $\fz$-Laplace equation.
Suppose that $f\in  C^0(\Omega)$, and let $ u$ be a  viscosity solution to
$-\Delta_{\infty} u  =f  \ \text{in $\Omega$}.$
Up to considering $-u$ and $-f$, we may assume that $f>0$.
Notice that $u-a$ for arbitrary $a\in\rr$ is also a viscosity solution.
See \cite{lw} for the following maximum principle (Lemma \ref{max}),  uniqueness (Lemma \ref{unique}), and  stability (Lemma \ref{stability}).
\begin{lem}\label{max}
 For any $U\Subset \Omega$, we have
$$\max_{\overline U} |u|\le C(\|u\|_{C^0(\partial U)}, \|f\|_{C^0(\overline U)}).$$
\end{lem}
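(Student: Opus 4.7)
After the normalization $-u\leftrightarrow u$, $-f\leftrightarrow f$ noted just above, I assume $f>0$ in $\Omega$ and write $M:=\|u\|_{C^0(\partial U)}$, $\Lambda:=\|f\|_{C^0(\overline U)}$. The plan is to obtain matching upper and lower bounds for $u$ on $\overline U$ by comparison with explicit Aronsson-type barriers, using the viscosity comparison principle for \eqref{inhom infty equ} from \cite{lw} (the content of the forthcoming Lemma \ref{unique}).

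The lower bound is essentially free: because $f>0$, the constant function $w\equiv -M$ is a classical (hence viscosity) subsolution, $-\Delta_\infty w=0\le f$ in $U$, and $w\le u$ on $\partial U$, so comparison yields $u\ge -M$ throughout $\overline U$. The upper bound is the substantive direction, since $-\Delta_\infty u=f>0$ permits $u$ to exceed its boundary maximum in the interior. For it, I would translate coordinates so that $\overline U\subset\{1\le x_1\le R\}$ for some $R$ depending only on $\diam(U)$, and introduce the one-dimensional barrier
\[
v(x):=M+A\bigl(R^{4/3}-x_1^{4/3}\bigr),\qquad A:=\bigl(\tfrac{81}{64}\Lambda\bigr)^{1/3}.
\]
Since $v$ depends only on $x_1\ge 1$ on $\overline U$, it is smooth on a neighborhood of $\overline U$, and the pointwise identity $-\Delta_\infty v=-v_1^2 v_{11}=\tfrac{64}{81}A^3$, together with the choice of $A$, gives $-\Delta_\infty v=\Lambda\ge f$ in $U$ classically. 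By construction $v\ge M\ge u$ on $\partial U$, so the comparison principle forces $u\le v\le M+AR^{4/3}$ throughout $\overline U$.

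Combining both inequalities,
\[
\max_{\overline U}|u|\;\le\; \|u\|_{C^0(\partial U)}+C\bigl(\diam(U)\bigr)\,\|f\|_{C^0(\overline U)}^{1/3},
\]
which is of the claimed form. The main (indeed, the only non-routine) obstacle is the comparison principle itself for the inhomogeneous $\infty$-Laplacian: comparison is delicate in this nonlinear, degenerate setting, but is available from \cite{lw} under the sign assumption $f>0$ that has been imposed here. It is worth noting that the barrier $v$ is structurally just the sharpness example $w(x_1,x_2)=-x_1^{4/3}$ highlighted in Remark immediately after Theorem \ref{mainthm}, so both the $x_1^{4/3}$ profile and the $\Lambda^{1/3}$ scaling in the constant are optimal and cannot be improved.
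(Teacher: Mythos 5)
The paper does not prove Lemma~\ref{max}: Section~2 opens with ``See \cite{lw} for the following maximum principle (Lemma~\ref{max}), uniqueness (Lemma~\ref{unique}), and stability (Lemma~\ref{stability})'', so there is no argument in the paper to compare against, and your barrier construction is a genuine self-contained proof. It is correct in substance, and the computation $-\Delta_\infty v=\tfrac{64}{81}A^3$, the translation to $\{1\le x_1\le R\}$, and the resulting bound $\max_{\overline U}|u|\le\|u\|_{C^0(\partial U)}+C\,(\diam U)^{4/3}\|f\|_{C^0(\overline U)}^{1/3}$ (the scale-covariant form) are all right.

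Two remarks that streamline the argument and remove an unnecessary dependency. You invoke the sub/supersolution comparison principle of \cite{lw}, and you conflate it with ``the forthcoming Lemma~\ref{unique}''; but that lemma is a uniqueness statement, strictly weaker than comparison of a subsolution against a supersolution. More importantly, no comparison theorem is needed here at all, because your barrier is $C^\infty$ on a neighborhood of $\overline U$. If $u-v$ had a positive maximum it would be attained at an interior point $x_0$ (since $u\le v$ on $\partial U$), and then $v+(u-v)(x_0)$ touches $u$ from above at $x_0$; the definition of viscosity subsolution gives $-\Delta_\infty v(x_0)\le f(x_0)$. Taking $A=\bigl(\tfrac{81}{64}(\Lambda+\delta)\bigr)^{1/3}$ so that $-\Delta_\infty v\equiv\Lambda+\delta>\Lambda\ge f$ on $\overline U$ yields a contradiction, and sending $\delta\to0$ recovers your bound; your non-strict choice $-\Delta_\infty v=\Lambda\ge f$ by itself is not quite enough for the touching argument, which is precisely the borderline case that forced you to reach for the full comparison principle. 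The lower bound is handled identically by the constant test function touching $u$ from below at an interior minimum, which forces $0\ge f(x_0)$. This makes the lemma elementary and independent of the delicate comparison machinery of \cite{lw}. Finally, that constant lower barrier uses the normalization $f>0$; if Lemma~\ref{max} is meant literally, without a sign hypothesis (none is listed in its statement), one simply runs the reflected barrier $-M-A(R^{4/3}-x_1^{4/3})$ from below by the same touching argument.
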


\begin{lem}\label{unique}
Let  $U\Subset \Omega$ and assume $|f|>0$ in $U$. If $v\in C(\overline U)$ is a viscosity solution to
\begin{equation*} -\Delta_{\infty} v  =f  \quad \text{in $U$};\ v=u\quad \text{on $\partial U$},
\end{equation*}
then $v=u$ in $\overline U$.
\end{lem}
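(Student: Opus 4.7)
The lemma is a uniqueness result, and I would prove it by establishing a comparison principle between viscosity sub- and supersolutions and applying it with the roles of $u$ and $v$ swapped. On each connected component of $U$, continuity together with $|f|>0$ forces $f$ to have constant sign; after replacing $(u,v,f)$ by $(-u,-v,-f)$ on components where needed we may assume $f>0$ throughout $U$. It then suffices to show: if $v_1\in C(\overline U)$ is a viscosity subsolution and $v_2\in C(\overline U)$ a viscosity supersolution of $-\Delta_\infty w=f$ in $U$, with $v_1\le v_2$ on $\partial U$, then $v_1\le v_2$ in $\overline U$. Applying this with $(v_1,v_2)=(u,v)$ and then $(v_1,v_2)=(v,u)$ yields $u=v$; Lemma~\ref{max} furnishes the a priori bounds needed to keep the doubling argument below within $\overline U$.

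\textbf{Doubling of variables.} Suppose for contradiction $m:=\max_{\overline U}(v_1-v_2)>0$; by the boundary inequality this maximum is attained at an interior point. For $\alpha\ge 1$ introduce
\[
\Phi_\alpha(x,y)=v_1(x)-v_2(y)-\tfrac{\alpha}{4}|x-y|^4,
\]
maximize over $\overline U\times\overline U$, and call the maximizer $(x_\alpha,y_\alpha)$. Standard viscosity arguments show that for $\alpha$ large both points lie in $U$, that $x_\alpha,y_\alpha\to x_0\in U$ and $\alpha|x_\alpha-y_\alpha|^4\to 0$; the Crandall--Ishii lemma then produces $2\times 2$ symmetric matrices $X_\alpha\le Y_\alpha$ together with a common gradient $p_\alpha=\alpha|x_\alpha-y_\alpha|^2(x_\alpha-y_\alpha)$ at which $v_1$ satisfies its subsolution inequality at $x_\alpha$ and $v_2$ its supersolution inequality at $y_\alpha$. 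Inserting these into the two inequalities yields only
\[
f(x_\alpha)-f(y_\alpha)\ge\langle(Y_\alpha-X_\alpha)p_\alpha,p_\alpha\rangle\ge 0,
\]
which is consistent with continuity of $f$ at $x_0$ rather than contradictory; hence further input is needed.

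\textbf{Perturbation by a $4/3$-barrier.} To upgrade this to a genuine contradiction I would perturb $v_2$ into a strict supersolution using the explicit radial profile $\psi(x)=C_0|x-z|^{4/3}$ for a fixed $z\in\overline U$ and $C_0>0$ chosen so that $-\Delta_\infty(-\psi)\equiv c_0>0$ holds as a classical identity away from $z$. A direct viscosity-calculus computation shows that $v_2^{\epsilon}:=v_2-\epsilon\psi$ is a supersolution of $-\Delta_\infty w\ge f+c'_0\epsilon$ for some $c'_0=c'_0(\epsilon)>0$, and for $\epsilon$ small the gap $\max_{\overline U}(v_1-v_2^{\epsilon})$ remains strictly positive. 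Rerunning the doubling of variables with $v_2^{\epsilon}$ in place of $v_2$ then produces $f(x_\alpha)-f(y_\alpha)\ge c'_0\epsilon$ for all large $\alpha$, which contradicts the continuity of $f$ at $x_0$ as $\alpha\to\infty$.

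\textbf{Main obstacle.} The principal difficulty is the strong degeneracy of $-\Delta_\infty$, which sees only the Hessian component along the gradient: when $p_\alpha=0$, or points in a direction in which $X_\alpha$ and $Y_\alpha$ agree, the viscosity inequalities are empty. The sign condition $f>0$ is used essentially, through the $4/3$-barrier, to force a definite positive gap in the supersolution inequality that cannot be absorbed by this degeneracy; the delicate technical step is verifying rigorously that subtracting the smooth but nonlinear barrier $\epsilon\psi$ shifts $-\Delta_\infty v_2$ by a strictly positive amount in the viscosity sense, uniformly along the doubling sequence. The choice of the quartic penalty $|x-y|^4$ (rather than $|x-y|^2$), which makes $p_\alpha$ collinear with $x_\alpha-y_\alpha$ and prevents it from being artificially squeezed to $0$, is what keeps the bilinear forms in the Crandall--Ishii inequality tractable and is the crux of the argument.
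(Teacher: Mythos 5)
The paper does not itself prove Lemma~\ref{unique}; it imports this uniqueness result from Lu--Wang \cite{lw}. Assessed on its own terms, your proposal has a genuine gap in the perturbation step, and the issue you yourself flag as ``delicate'' in your final paragraph is in fact fatal. The claim that $v_2^\epsilon:=v_2-\epsilon\psi$ is a viscosity supersolution of $-\Delta_\infty w= f+c_0'\epsilon$ fails because $\Delta_\infty$ is a cubic, non-additive operator. If $\phi$ is $C^2$ and touches $v_2^\epsilon$ from below at $y_0$, then $\phi+\epsilon\psi$ touches $v_2$ from below there, and the supersolution property gives only $-\Delta_\infty(\phi+\epsilon\psi)(y_0)\ge f(y_0)$. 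Expanding the cubic form at $y_0$,
$$
\Delta_\infty(\phi+\epsilon\psi)=\Delta_\infty\phi+\epsilon\left(2\langle D^2\phi\,D\psi,D\phi\rangle+\langle D^2\psi\,D\phi,D\phi\rangle\right)+O(\epsilon^2),
$$
and the $O(\epsilon)$ cross terms depend on $D\phi$ and $D^2\phi$, quantities over which you have no control and which have no definite sign; they dominate the favorable term $\epsilon^3\Delta_\infty\psi=\epsilon^3 c_0$ by two orders in $\epsilon$. Thus there is no $c_0'(\epsilon)>0$ for which $v_2^\epsilon$ is a supersolution of $-\Delta_\infty w=f+c_0'\epsilon$, and the contradiction you aim for in the doubling argument never materializes.

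The hypothesis $f>0$ must indeed be exploited, but through the degree-$3$ homogeneity $\Delta_\infty(cu)=c^3\Delta_\infty u$, not through an additive barrier. Add a constant $a$ (which leaves $\Delta_\infty$ unchanged) so that $v_2+a\ge 0$ on $\partial U$; for $c>1$ set $v_2^c:=c(v_2+a)$. Then $v_2^c$ is a viscosity supersolution of $-\Delta_\infty w=c^3 f>f$, and on $\partial U$ one has $v_1+a\le v_2+a\le v_2^c$. Running your doubling of variables with the quartic penalty against the pair $(v_1+a,\,v_2^c)$ now closes: when $p_\alpha=0$ (that is, $x_\alpha=y_\alpha$), the quartic penalty has vanishing gradient at the maximizer, so the test function at $y_\alpha$ has $\Delta_\infty=0$, and the supersolution inequality reads $0\ge c^3 f(y_\alpha)>0$, an immediate contradiction; when $p_\alpha\ne 0$, the matrices $X_\alpha\le Y_\alpha$ from Crandall--Ishii give $c^3 f(y_\alpha)\le -\langle Y_\alpha p_\alpha,p_\alpha\rangle\le-\langle X_\alpha p_\alpha,p_\alpha\rangle\le f(x_\alpha)$, and letting $\alpha\to\infty$ yields $c^3 f(x_0)\le f(x_0)$, impossible since $c>1$ and $f(x_0)>0$. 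Letting $c\downarrow 1$ then gives the comparison principle. This multiplicative scaling, which the nonlinearity of $\Delta_\infty$ rewards rather than punishes, is the missing ingredient; an additive $4/3$-barrier cannot substitute for it.
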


\begin{lem}\label{stability}
Let  $U\Subset \Omega$ and assume $|f|>0$ in $\overline U$.
For   $\dz\in(0,1]$, let $f^\dz\in C^0(\overline U)$ such that $f^\dz\to f$ in $C^0(\overline U)$, and let $\hat u^\dz\in  C^0(\overline U)$  be
a viscosity solution to
$$\Delta_\fz \hat u^\dz=f^\dz \ {\rm in}\ U; \quad \hat u^\dz=u \ {\rm on}\ \partial U.$$
Then $\hat u^\dz\to u$ in $C^0(\overline U)$.
\end{lem}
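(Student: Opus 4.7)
The plan is to combine uniform compactness of the family $\{\hat u^\delta\}$ with the uniqueness result in Lemma \ref{unique}. First I would establish uniform bounds: since $f^\delta \to f$ in $C^0(\overline U)$, the norms $\|f^\delta\|_{C^0(\overline U)}$ are bounded uniformly in $\delta$, and moreover $|f^\delta| \ge \tfrac12 \min_{\overline U}|f| > 0$ for all sufficiently small $\delta$, so Lemma \ref{max} supplies a uniform $C^0(\overline U)$-bound on $\hat u^\delta$ (it depends only on $\|u\|_{C^0(\partial U)}$ and on the uniform bound for $\|f^\delta\|_{C^0(\overline U)}$).

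Next I would obtain equicontinuity on $\overline U$. For interior equicontinuity I would invoke Lemma \ref{lipu}, which furnishes a $\delta$-independent local Lipschitz estimate for $\hat u^\delta$ on every $V \Subset U$, expressed in terms of $\|\hat u^\delta\|_{C^0(\overline U)}$ and $\|f^\delta\|_{C^0(\overline U)}$, both of which are already controlled. For equicontinuity up to $\partial U$, I would construct, for each $x_0 \in \partial U$ and each $\eta > 0$, $C^2$ barriers $w^\pm$ on a neighborhood of $x_0$ satisfying $-\Delta_\infty w^+ \ge \sup_\delta \|f^\delta\|_{C^0(\overline U)} + 1$ and $-\Delta_\infty w^- \le -\sup_\delta \|f^\delta\|_{C^0(\overline U)} - 1$, with $w^\pm(x_0) = u(x_0) \pm \eta$, and such that $w^- \le u \le w^+$ on $\partial U$ near $x_0$ (using only the modulus of continuity of $u|_{\partial U}$). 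The comparison principle for the inhomogeneous $\infty$-Laplacian that underlies Lemma \ref{unique} then sandwiches $w^- \le \hat u^\delta \le w^+$ near $x_0$, yielding a $\delta$-independent modulus of continuity at $\partial U$.

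With a uniform bound and uniform equicontinuity in hand, Arzel\`a--Ascoli lets me extract a subsequence $\hat u^{\delta_k} \to v$ uniformly on $\overline U$, and $v = u$ on $\partial U$ since $\hat u^{\delta_k} = u$ on $\partial U$. Standard stability of viscosity solutions under locally uniform convergence of both solutions and right-hand sides then shows that $v$ is a viscosity solution to $-\Delta_\infty v = f$ in $U$. Because $|f| > 0$ in $\overline U$, Lemma \ref{unique} forces $v = u$. Since every subsequence has a further subsequence converging to the same limit $u$, the whole family converges: $\hat u^\delta \to u$ in $C^0(\overline U)$.

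The main obstacle is the uniform boundary modulus of continuity. The interior Lipschitz bound and the viscosity-stability step are routine once the bounds are available, but obtaining a $\delta$-independent modulus at $\partial U$ requires the barriers $w^\pm$ to be chosen depending only on the modulus of continuity of $u|_{\partial U}$ and on the uniform upper bound for $\|f^\delta\|_{C^0(\overline U)}$. The hypothesis $|f| > 0$, which transfers to $|f^\delta|$ bounded below, is precisely what makes such explicit barriers (e.g.\ affine functions plus small radial corrections) feasible for the inhomogeneous equation and permits application of the comparison principle.
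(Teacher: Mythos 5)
The paper itself does not prove Lemma \ref{stability}: it is quoted from Lu--Wang \cite{lw} together with Lemmas \ref{max} and \ref{unique}. Your compactness-plus-uniqueness scheme --- uniform $C^0$ bound from Lemma \ref{max}, uniform interior Lipschitz from Lemma \ref{lipu}, barriers for a uniform boundary modulus, Arzel\`a--Ascoli, closedness of viscosity solutions under locally uniform convergence as in \cite{CIL}, and then Lemma \ref{unique} to identify every subsequential limit with $u$ --- is the standard argument and is in essence the proof given in \cite{lw}.

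The one place that needs firming up is the boundary barrier. You define $w^\pm$ only on a neighbourhood of $x_0$, but the comparison principle is applied on all of $U$, so either the inner boundary of that neighbourhood has to be controlled using the uniform $C^0$ bound on $\hat u^\delta$, or --- more cleanly --- the barriers should be global on $U$. Since $u$ solves the equation on a neighbourhood of $\overline U$, it is Lipschitz on $\overline U$ with some constant $L_u$, and one may take $w^\pm(x)=u(x_0)\pm L|x-x_0|\mp b|x-x_0|^{4/3}$: a radial computation gives $-\Delta_\infty w^+=\bigl(L-\tfrac43 b r^{1/3}\bigr)^2\tfrac49 b r^{-2/3}$ (and the negative of this for $w^-$), so choosing $b$ and then $L\gg\max\{b(\diam U)^{1/3},L_u\}$ makes $-\Delta_\infty w^+\ge \sup_\delta\|f^\delta\|_{C^0(\overline U)}$, $-\Delta_\infty w^-\le -\sup_\delta\|f^\delta\|_{C^0(\overline U)}$ on $U\setminus\{x_0\}$, while $w^-\le u=\hat u^\delta\le w^+$ on all of $\partial U$. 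The Lu--Wang comparison (available because $f>0$ on $\overline U$ forces $f^\delta>0$ for small $\delta$) then sandwiches $\hat u^\delta$ on $\overline U$ and delivers the uniform boundary modulus. With that adjustment your argument is complete.
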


Moreover,
it is known that   $u\in C^{0,1}(\Omega)$, see \cite{lw,l}.
The following quantative estimates  essentially follow from   \cite{l}.
\begin{lem}\label{lipu} For any ball $B\subset 2 B\Subset \Omega$ with radius $R$, we have
$$ \|u\|_{C^{0,1}(B)}\le   C\frac1R\|u\|_{C^0(2 \overline B )}+C (R\|f\|_{C^0(2 \overline B )})^{1/3}.$$
\end{lem}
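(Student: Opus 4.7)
The plan is to reduce to a scale-invariant statement on the unit ball and then prove the Lipschitz bound by comparison with explicit radial barriers. First, I would exploit the natural scaling of $-\Delta_\infty u = f$: if $v(y) := R^{-1} u(x_0 + Ry)$ on $B(0,2)$, then $Dv(y)=Du(x_0+Ry)$, $D^2v(y)=R\,D^2u(x_0+Ry)$, and $-\Delta_\infty v = g$ in the viscosity sense, where $g(y):=R f(x_0+Ry)$. Since $\|v\|_{C^{0,1}(B(0,1))}=\|u\|_{C^{0,1}(B)}$, $\|v\|_{C^0(B(0,2))}=R^{-1}\|u\|_{C^0(2\overline B)}$, and $\|g\|_{C^0(B(0,2))}=R\|f\|_{C^0(2\overline B)}$, it suffices to prove
$$
\|v\|_{C^{0,1}(B(0,1))}\le C\|v\|_{C^0(B(0,2))}+C\|g\|_{C^0(B(0,2))}^{1/3}
$$
for any viscosity solution $v$ on $B(0,2)$ to $-\Delta_\infty v=g$ with $g\in C^0$.

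Next I would construct a two-parameter family of radial barriers. For a base point $z\in B(0,1)$ and constants $L, M\ge 0$, set $h(r) := Lr - Mr^{4/3}$ and $\phi_\pm(y):= v(z) \pm h(|y-z|)$. A direct computation in polar coordinates gives $-\Delta_\infty\phi_\pm = \mp (h'(r))^2 h''(r)$, and with $h''(r) = -(4M/9)r^{-2/3}$ and $h'(r)=L-(4M/3)r^{1/3}$ this equals $\pm (h')^2 (4M/9)r^{-2/3}$. Hence, for sufficiently small radii and $M \gtrsim \|g\|_\infty^{1/3}$, the function $\phi_+$ is a strict viscosity supersolution and $\phi_-$ a strict subsolution of $-\Delta_\infty\cdot =g$ on a suitable punctured ball around $z$. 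I would then tune $L$ to dominate the oscillation of $v$ and $M$ to absorb the inhomogeneity; specifically, on $\partial B(z,r')$ with $r'$ the distance from $z$ to $\partial B(0,2)$, one needs
$$
Lr'- M(r')^{4/3}\ge \osc_{B(0,2)} v,
$$
which forces $L\le C\|v\|_{C^0(B(0,2))}+ C\|g\|_\infty^{1/3}$ after optimizing over admissible $(L,M)$.

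Applying the comparison principle for the inhomogeneous $\infty$-Laplace equation with strictly ordered right-hand sides (a standard consequence of the Crandall--Ishii--Lions theory as in \cite{lw,l}) to $\phi_+$ versus $v$ and to $v$ versus $\phi_-$ on $B(z,r')\setminus\{z\}$ (noting the singularity of the barriers at $z$ is harmless since $\phi_\pm(z)=v(z)$), I would obtain
$$
|v(y)-v(z)|\le L|y-z|+M|y-z|^{4/3}\qquad\forall y\in B(z,r'),
$$
with $L+M\le C\|v\|_{C^0(B(0,2))}+C\|g\|_\infty^{1/3}$. Taking $z,y\in B(0,1)$ and noting $|y-z|\le 2$ gives the claimed Lipschitz bound for $v$, which rescales back to the statement of Lemma~\ref{lipu}.

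The main technical obstacle is the careful calibration of $L$ and $M$ so that both the boundary ordering and the strict differential inequality hold simultaneously on the whole annulus, since the $4/3$-correction vanishes at $z$ while the cone term dominates. One must verify that the viscosity comparison still goes through despite the Lipschitz corner of the barrier at $z$; this is handled by the standard trick of perturbing $\phi_\pm$ by $\mp \eta|y-z|^2$ with $\eta\to 0^+$ and passing to the limit, together with the fact that $z$ itself is removable as a test point because $v$ is finite there.
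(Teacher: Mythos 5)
Your barrier argument is correct in substance and gives the stated estimate, but it is genuinely different from what the paper does. The paper reduces to the unit ball and then plays the dimension-lifting trick of Evans--Smart/Lindgren: it sets $\wz u(x,x_3)=u(x)/(4^{1/3}\|f\|^{1/3})+5x_3$ on a slab, observes that $\wz u$ solves the three-dimensional equation $\wz\Delta_\fz\wz u=\wz f$ with $|\wz f|<1/2$ and $|D\wz u|\ge 5$, and then quotes Lindgren's monotonicity of the normalized slopes $r\mapsto \pm L^\pm_r(\wz u,\wz x)+r$ to get $|D\wz u(\wz x)|\le \max\{L^+_{1/2},-L^-_{1/2}\}+1/2$, which rescales to the claim. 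Your route instead builds the radial super/subsolutions $\phi_\pm(y)=v(z)\pm(L|y-z|-M|y-z|^{4/3})$ and compares. That is more self-contained (it does not need Lindgren's slope monotonicity for the inhomogeneous equation) and makes the exponents $4/3$ and $1/3$ visibly come from the explicit Aronsson-type solution, whereas the paper's argument is shorter because the hard work is outsourced to a cited lemma.

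A few remarks on details worth tightening. First, the calibration is not quite as you phrase it. The supersolution condition $-\Delta_\fz\phi_+=(h')^2\frac{4M}{9}r^{-2/3}>\|g\|_\fz$ on $0<r<r'\le 2$ requires keeping $h'$ bounded below, which needs $M\ls L$, and then $M\gs \|g\|_\fz/L^2$. The constraint $M\ls L$ combined with $M\gs\|g\|_\fz/L^2$ forces $L\gs\|g\|_\fz^{1/3}$; together with the boundary inequality $Lr'-M(r')^{4/3}\ge\osc v$ (which, once $M\ls L$, gives $L\gs\osc v$ since $r'\ge 1$), one chooses $L\sim \|v\|_{C^0}+\|g\|_\fz^{1/3}$ and $M\sim\|g\|_\fz/L^2$; the statement ``$M\gs\|g\|_\fz^{1/3}$'' holds only in the regime $\|g\|_\fz^{1/3}\gs\|v\|_{C^0}$. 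Also $|v(y)-v(z)|\le L|y-z|-M|y-z|^{4/3}\le L|y-z|$, not $L|y-z|+M|y-z|^{4/3}$ (the sign of $M$ must be negative in $h$ for $\phi_+$ to be a supersolution). Second, the comparison step needs less machinery than you suggest: since $\phi_+(z)=v(z)$ and $\phi_+\ge v$ on $\partial B(z,r')$, a positive interior maximum of $v-\phi_+$ can only occur at $y_0\ne z$, where $\phi_+$ is $C^2$ and touches $v$ from above, contradicting the strict pointwise inequality $-\Delta_\fz\phi_+(y_0)>\|g\|_\fz\ge g(y_0)$; no quadratic perturbation or full Crandall--Ishii--Lions doubling is required. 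Third, the case $\|g\|_\fz=0$ degenerates $M$ to zero and one loses strictness; this is harmless (pass to the limit $M\downarrow 0$, or invoke comparison with cones for $\fz$-harmonic functions), but it should be noted.
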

\begin{proof}
Up to some translation and scaling, we may assume that $B=B(0,1/2)$, and it suffices to prove that
$$\|u\|_{C^{0,1}(B)}\le   C \|u\|_{C^0(2 \overline B )}+C  \|f\|_{C^0(2 \overline B )} ^{1/3}.$$
Consider the function
$$ \wz u  (x,x_{3})=  \frac{  u(x)}{4^{1/3} \|f\|^{1/3}_{C^0(2\overline B)}}+ 5x_{3} $$ on $2B\times \rr$.
Note that
$\wz \Delta_\fz \wz u =\wz f $ in $2B\times \rr$, where $\wz \Delta_\fz$ is the $3$-dimensional $\fz$-Laplacian
and $ |\wz f (x,x_3)|= |f(x)|/4  \|f\|_{C^0(2\overline B)}<1/2 $.

Note that for each $\wz x\in \wz B=B((0,0),1/2)$ and each $r<1-|\wz x|$, we have  $ \pm L^\pm_r(\wz u, \wz x )\ge 5$,
where $$L^+_r(\wz u, \wz x ) :=\sup_{\partial B(\wz x,r)} \frac{\wz u(\wz y)-\wz u(\wz x)}r\quad {\rm and}\quad L^-_r(\wz u, \wz x ): =\inf_{\partial B(\wz x,r)} \frac{\wz u(\wz y)-\wz u(\wz x)}r.$$
As proved in \cite[Corollary 1]{l},
for $\wz x\in \wz B $ the function $r\in(0,1/2)\to \pm L^{\pm}_r(\wz u,\wz x)+r $  is increasing.
Thus for $\wz x\in \wz B $,
$$|
D\wz u(\wz x)|\le \max\{L^+_{1/2} (\wz u,\wz x),-L^-_{1/2} (\wz u,\wz x)\}+\frac12.$$
This yields
\begin{align*}\| \wz u\|_{C^{0,1}(\wz B)}
  &\le     \|\wz u \|_{C^0(2\overline {\wz B})}+\frac12\le  4^{-1/3}\|f\|^{-1/3}_{C^0(2\overline B )}  \| u \|_{C^0(2 \overline B)}+ 11,
  \end{align*}
  which further implies
  $$\| u\|_{C^{0,1}(B)}\le 4^{1/3}\|f\|^{1/3}_{C^0(2\overline B )} \| \wz u \|_{C^{0,1}(\wz B) }\le   C \|u\|_{C^0(2\overline B )}+C  \|f\|_{C^0(2\overline B )} ^{1/3} $$
  as desired.
 \end{proof}

\section{Uniform  estimates for approximation equations when $ f\in (\cup_{q>1}W^{1,q}_\loc(\Omega))\cap C^0(\Omega)$}

Suppose $\Omega \Subset\mathbb R^2$, and $ f\in W^{1,q}_\loc(\Omega)\cap C^0(\Omega)$ for some $q>1$ with $f>0$ in $\Omega$.
Let $U \Subset\Omega$   and $\ez_U:=\min\{\frac14\dist(U,\partial\Omega),1\}$.
For each  $ 0<\ez<\ez_U$, write
$$
  f^\ez(x)=\int_\Omega f(x-z)\frac1{\ez^2}\vz(\frac z{\ez})\,dz\quad  \forall x\in U, $$%\ {\rm with }\ \dist(x,\partial \Omega)<\ez_U,$$
where $\vz\in C_c^\fz(B(0,1))$, $0\le \vz\le 1$ and $\int_{\mathbb R^2}\vz(z)\,dz=1$.   The following simple facts are used quite  often:
for all $\ez\in[0,\ez_U]$ and $\wz U=\{x\in\Omega, \dist(x,\partial \Omega)>2\ez_U \}$,
$$ \|f^\ez\|_{C^0(\overline U)}\le  \|f\|_{C^0(\overline {\wz U})}\quad{\rm and }\quad\|  f^\ez\|_{W^{1,q}(U)}\le  \|f\|_{W^{1,q}(\wz U)};$$
moreover, for all $B=B(x,R)\Subset U$ and $\ez<\min\{R,\ez_U\}$,
$$ \|f^\ez\|_{C^0(\overline B)}\le  \|f\|_{C^0(2\overline B)}\quad{\rm and }\quad  \| f^\ez\|_{W^{1,q}(B)}\le  \|f\|_{W^{1,q}(2B)}.$$

For each  $  \ez \in(0,\ez_U]$,  let $ u^\epsilon\in C^\infty(U)\cap C(\overline U)$ be a   solution to
\begin{equation}\label{infty ezf}
-\Delta_{\infty} u^\epsilon -\ez \Delta u^ \epsilon=f^\ez \quad \text{in $U$};\quad u^\ez=u   \quad \text{on $\partial  U$};
 \end{equation}
 see for example \cite{lmz} for the existence of such $u^\ez$.
The following  uniform estimates  and  boundary uniform estimates of $u^\ez$ follows from \cite{l,lmz}.

\begin{lem}\label{maxandboundary}
We have
$$
\sup_{\ez\in(0,\ez_U]}\max_{\overline{ U}} |u^{\ez}| \le C (\|u\|_{C(\partial U)},\|f\|_{C^0(\overline {\wz U})});
$$
and there exists $\gamma\in(0,1)$ such that
$$
\sup_{\ez\in(0,\ez_U]} \big|u^\epsilon(x)-u(x_0)\big|\le C(\|f\|_{C^0(\overline {\wz U})})|x-x_0|^\gamma, \ \forall\ x\in U, \ x_0\in\partial U.
$$
\end{lem}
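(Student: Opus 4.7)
Both estimates follow from the classical comparison principle for the proper degenerate elliptic equation \eqref{infty ezf} satisfied by $u^\ez\in C^\infty(U)\cap C(\overline U)$: any smooth supersolution $\Phi$ of $-\Delta_\infty\Phi-\ez\Delta\Phi=f^\ez$ with $\Phi\ge u^\ez$ on $\partial U$ dominates $u^\ez$ in $\overline U$, and symmetrically for subsolutions. The task is thus to produce explicit barriers whose controlling constants are independent of $\ez\in(0,\ez_U]$, depending only on the data listed in the statement.

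For the uniform $L^\infty$ bound, the constant $-\|u\|_{C^0(\partial U)}$ is trivially a subsolution since $f^\ez>0$, yielding the lower bound. For the upper bound, translate so that $U\subset\{1<x_1<\diam U+1\}$ and use the Aronsson-type function $\phi(x):=x_1^{4/3}$, which satisfies $\Delta_\infty\phi=64/81$ and $\Delta\phi=(4/9)x_1^{-2/3}$. With $c:=(81\|f\|_{C^0(\overline{\wz U})}/64)^{1/3}$, the function
$$\Phi(x):=\|u\|_{C^0(\partial U)}+c\bigl[(\diam U+1)^{4/3}-x_1^{4/3}\bigr]$$
satisfies $-\Delta_\infty\Phi-\ez\Delta\Phi\ge c^3\tfrac{64}{81}=\|f\|_{C^0(\overline{\wz U})}\ge\|f^\ez\|_{C^0(\overline U)}$ in $U$ and $\Phi\ge\|u\|_{C^0(\partial U)}\ge u$ on $\partial U$; comparison gives $u^\ez\le\Phi$ and hence the quantitative upper bound.

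For the boundary H\"older estimate, fix $x_0\in\partial U$ and apply Lemma \ref{lipu} on $B(x_0,\ez_U/2)$ to obtain $|u(y)-u(x_0)|\le L|y-x_0|$ on $\partial U$, with $L$ controlled by $\|u\|_{C^0(\overline{\wz U})}$ and $\|f\|_{C^0(\overline{\wz U})}$. Fix $\gamma\in(0,1)$ (for concreteness $\gamma=1/2$) and consider the radial barriers $\Phi^{\pm}(x):=u(x_0)\pm A|x-x_0|^\gamma$. The identities $\Delta_\infty|x-x_0|^\gamma=\gamma^3(\gamma-1)|x-x_0|^{3\gamma-4}$ and $\Delta|x-x_0|^\gamma=\gamma^2|x-x_0|^{\gamma-2}$ give
$$-\Delta_\infty\Phi^{\pm}-\ez\Delta\Phi^{\pm}=\pm A^3\gamma^3(1-\gamma)|x-x_0|^{3\gamma-4}\mp\ez A\gamma^2|x-x_0|^{\gamma-2}.$$
For $\Phi^+$ to be a supersolution, the positive $A^3$-term must dominate both $f^\ez$ and the negative $\ez$-correction; this is ensured by $A\ge C\bigl(\|f\|_{C^0(\overline{\wz U})}^{1/3}(\diam U)^{(4-3\gamma)/3}+\sqrt{\ez_U}(\diam U)^{1-\gamma}\bigr)$. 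For $\Phi^-$ to be a subsolution (given $f^\ez>0$) it suffices that the displayed right-hand side be nonpositive, i.e.\ $\ez\le A^2\gamma(1-\gamma)|x-x_0|^{2\gamma-2}$, which is guaranteed by $A\ge\sqrt{\ez_U/(\gamma(1-\gamma))}\,(\diam U)^{1-\gamma}$. The boundary dominations $\Phi^-\le u\le\Phi^+$ on $\partial U$ reduce via the Lipschitz bound to $A\ge L(\diam U)^{1-\gamma}$. Comparison then yields $|u^\ez(x)-u(x_0)|\le A|x-x_0|^\gamma$ in $U$ with $A$ independent of $\ez$, which is the claimed estimate.

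The main subtlety is the existence of a genuine subsolution $\Phi^-$: because $f^\ez>0$ drives $u^\ez$ upward, constructing a lower barrier matching the boundary value $u(x_0)$ is a priori delicate. The radial ansatz succeeds precisely because, for $\gamma\in(0,1)$, the $\infty$-Laplacian contribution in $\Phi^-$ strictly dominates the perturbing $\ez\Delta$ term on all of $U$, with the comparison exploiting only the fixed bound $\ez\le\ez_U$; this is what makes the final constant genuinely independent of $\ez$.
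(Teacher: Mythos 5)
The paper offers no proof of this lemma — it simply cites \cite{l,lmz} — so you have supplied a self-contained argument where the paper appeals to references. Your barrier/comparison approach for the strictly elliptic regularized operator $-\Delta_\infty - \ez\Delta$ is the standard and correct route, and the computations check out: the translated Aronsson profile $x_1^{4/3}$ yields an $\ez$-independent upper $L^\infty$ barrier (with $-\ez\Delta\Phi>0$ helping rather than hurting), and the radial cones $u(x_0)\pm A|x-x_0|^\gamma$ give matched boundary barriers whose constants are controlled only via $\ez\le\ez_U$. Using Lemma \ref{lipu} to get a boundary Lipschitz modulus for $u$ near $x_0\in\partial U$ is legitimate because $U\Subset\Omega$ so that $B(x_0,\ez_U)\Subset\Omega$. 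Two small remarks. First, the constant $A$ you obtain inevitably depends on the boundary oscillation of $u$ (through $L$ from Lemma \ref{lipu}), whereas the lemma's display shows only $\|f\|_{C^0(\overline{\wz U})}$; this is an imprecision in the paper's statement, not a flaw in your argument — the essential content is uniformity in $\ez$, which you achieve. Second, the classical comparison step at a putative interior extremum formally requires strict super/subsolutions; since your barrier parameters can be enlarged slightly to produce the needed slack this is routine, but you should mention it explicitly.
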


The following identity is crucial to establish  uniform Sobolev estimates of $u^\ez$ and $|Du^\ez|^\alpha$.
\begin{lem}\label{identity1} For each $\epsilon\in(0,\ez_U)$ we have
\begin{equation}\label{key  identityII}(-\det D^2u^\ez) |Du^\ez|^2=  |D^2u^\ez Du^\ez|^2 + \ez(\Delta u^\ez)^2+f^\ez\Delta u^\ez \quad {\rm in}\ U.\end{equation}
\end{lem}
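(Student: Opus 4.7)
The proof should be essentially algebraic, exploiting the fact that $u^\ez \in C^\infty(U)$ is a classical solution, so every computation below is pointwise. My plan is to decouple the lemma into a purely algebraic identity valid for any smooth function on a planar domain, and then feed in the PDE \eqref{infty ezf}.

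The key algebraic input is the Cayley--Hamilton theorem in dimension $2$: for any symmetric $2\times 2$ matrix $H$ one has $H^2 = (\mathrm{tr}\, H) H - (\det H) I$. Applying this to $H := D^2 u^\ez$ and contracting with $p := Du^\ez$ from both sides gives
\begin{equation*}
p^T H^2 p = (\mathrm{tr}\, H)(p^T H p) - (\det H)|p|^2.
\end{equation*}
Since $H$ is symmetric, $p^T H^2 p = |Hp|^2 = |D^2 u^\ez Du^\ez|^2$, while $\mathrm{tr}\, H = \Delta u^\ez$ and $p^T H p = u^\ez_i u^\ez_j u^\ez_{ij} = \Delta_\infty u^\ez$ (using the sign convention from \eqref{e1.d1}). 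Thus, for any $C^2$ function on a planar domain,
\begin{equation*}
|D^2 u^\ez Du^\ez|^2 \;=\; (\Delta u^\ez)\,(\Delta_\infty u^\ez) \;-\; (\det D^2 u^\ez)\,|Du^\ez|^2,
\end{equation*}
which I would rearrange as $(-\det D^2 u^\ez)|Du^\ez|^2 = |D^2 u^\ez Du^\ez|^2 - (\Delta u^\ez)(\Delta_\infty u^\ez)$.

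The final step is to substitute \eqref{infty ezf}. Since $u^\ez$ is a classical solution, the equation reads $\Delta_\infty u^\ez = -f^\ez - \ez \Delta u^\ez$ pointwise in $U$. Plugging into the right-hand side of the rearranged identity yields
\begin{equation*}
-(\Delta u^\ez)(\Delta_\infty u^\ez) \;=\; (\Delta u^\ez)\bigl(f^\ez + \ez \Delta u^\ez\bigr) \;=\; f^\ez \Delta u^\ez + \ez (\Delta u^\ez)^2,
\end{equation*}
and \eqref{key identityII} follows at once.

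There is no genuine obstacle: the content of the lemma is the 2D Cayley--Hamilton identity combined with the regularized equation. The only point worth flagging is the bookkeeping around the sign convention of $\Delta_\infty$ (the paper uses $\Delta_\infty u = u_i u_j u_{ij}$, so the minus sign in \eqref{infty ezf} is inherited correctly when one substitutes $\Delta_\infty u^\ez$ from the equation). I would not need to invoke any higher regularity beyond $u^\ez \in C^\infty(U)$, which is already granted.
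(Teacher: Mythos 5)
Your proof is correct, and the overall strategy matches the paper: derive the algebraic identity $(-\det D^2 v)|Dv|^2 = |D^2 v\, Dv|^2 - \Delta v\, \Delta_\infty v$ for smooth functions on a planar domain, then substitute $\Delta_\infty u^\ez = -\ez\Delta u^\ez - f^\ez$ from the regularized PDE. The difference lies in how you establish the algebraic identity. The paper verifies it by brute-force coordinate expansion of $|D^2 v\, Dv|^2 = (v_1 v_{11} + v_2 v_{12})^2 + (v_1 v_{21} + v_2 v_{22})^2$, regrouping terms to recognize $\Delta v$, $\Delta_\infty v$ and $\det D^2 v$. You instead invoke the $2\times 2$ Cayley--Hamilton theorem $H^2 = (\mathrm{tr}\,H)H - (\det H)I$ and contract with $p = Du^\ez$, using symmetry of $H = D^2 u^\ez$ to read $p^T H^2 p$ as $|Hp|^2$. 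Your route is cleaner and more conceptual: it makes transparent why the identity (and hence the whole circle of Sobolev estimates that rests on it) is confined to dimension $n=2$, namely because the quadratic Cayley--Hamilton relation is special to $2\times 2$ matrices. The paper's coordinate computation has the modest advantage of being self-contained and verifiable line-by-line without naming an auxiliary theorem, but the two derivations carry identical content and either could be dropped into the paper without altering anything downstream.
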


\begin{proof}
The equality \eqref{key identityII} follows  from $-\Delta_\infty u^\ez= \ez \Delta u^\ez+f^\ez$
and the following equality
$$
(-\det D^2v) |Dv|^2= |D^2v Dv|^2  -\Delta v\Delta_\infty v  \quad \forall v\in C^\fz(U) .
$$
This equality was observed in \cite{kzz};
 the details is given  for  reader's convenience as below:
\begin{align*}
 |D^2v Dv|^2
&= (v_1v_{11}+v_2v_{12})^2+ (v_1v_ {21}+v_2v_{22})^2\\
&=v_{11} [(v_1)^2v_{11}+ 2v_1v_2 v_{12}]+ v_{22} [(v_2)^2v_{22}+ 2v_1v_2 v_{12}]+  [v_{12})^2((v_1)^2+(v_2)^2]\\
&=(v_{11}+v_{22} ) \Delta_\infty v-  v_{11}v_{22}[(v_2)^2+(v_1)^2]+(v_{12})^2[(v_1)^2+(v_2)^2]\\
&=\Delta v\Delta_\infty v +(-\det D^2v) |Dv|^2.
\end{align*}
 This completes the proof of Lemma \ref{identity1}.
\end{proof}

Associated to such $u^\epsilon$, we introduce a functional $\mathbb I_\epsilon$ on  $ C_c (U)$ defined by
\begin{align*}
\mathbb I_\epsilon(\phi)
&= \int_{U} -\det D^2u^\ez \phi \, dx\quad\forall \phi\in C_c(U).% \label{identityII}
\end{align*}
By  Lemma \ref{identity1} we   write
\begin{align}
\mathbb I_\epsilon(\psi |Du^\ez|^2)
&= \int_{U}  |D^2u^\ez Du^\epsilon |^2 \psi \, dx +\ez\int_{U}   (\Delta u^\ez)^2  \psi \, dx+  \int_{U}    f^\ez \Delta u^\ez   \psi \, dx \quad \forall  \psi\in  C_c (U). \label{identityIII}
\end{align}
In particular,   we have $$\int_{U} -\det D^2u^\ez |Du^\ez|^2\psi \, dx\ge \int_{U}    f^\ez \Delta u^\ez   \psi \, dx\quad \forall 0\le
 \psi\in  C_c (U).$$

On the other hand,
 for any  $v\in C^\fz(U)$  the determinant $\det D^2v$ is actually of  divergence form, that is,
$$
-\det D^2v
=-\frac12{\rm div}(\Delta v Dv-D^2vDv)
\quad{\rm in}\ U.
$$
We  further write
\begin{align}\label{functional}\mathbb I_\epsilon (\phi) &= \frac12\int_{U} [\Delta u^\ez u^\ez_i\phi_i  - u^\ez_{ij} u^\ez_j \phi_i]\,dx
\quad \forall  \phi\in W^{1,\,2}_c (U) \end{align}

Letting  $\phi=|Du^\ez|^2   (u^\ez\xi)^2$ in \eqref{functional}  we obtain the following estimates. The proof is  postponed to Section 6.

\begin{lem}\label{Duez}
For  any  $\xi\in C_c^2(U)$, we have
\begin{align*}
 &\int_{U}  |D^2u^\ez Du^\epsilon |^2   ( u^\ez)^2\xi^6\, dx
 +\int_{U}  |Du^\ez| ^6\xi^6 \,dx\\
 &\quad\le  C
\int_{U} |u^\ez |^6(| D\xi|^2+|D^2\xi||\xi|)^3\,dx+
C
\int_{U} \xi^6 (f |u^\ez| )^{3/2} \,dx +C \left|\int_{U}          f^\ez_i u^\ez_i   (u^\ez)^2   \xi^6    \,dx\right|\\
&\quad\quad+C  \ez^3\int_{U}   \xi^6\,dx +C   \ez^{3/2} \int_{U} |D\xi|^{  \alpha +1} ( u^\ez)^3 \xi^3 \,dx.
 \end{align*}
\end{lem}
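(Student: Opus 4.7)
The plan is to combine the two representations of the functional $\mathbb I_\ez$: the identity \eqref{identityIII} applied with $\psi=(u^\ez)^2\xi^6$, which gives
$$\mathbb I_\ez(|Du^\ez|^2 (u^\ez)^2\xi^6)=\int_U |D^2u^\ez Du^\ez|^2 (u^\ez)^2\xi^6\,dx+\ez\int_U(\Delta u^\ez)^2(u^\ez)^2\xi^6\,dx+\int_U f^\ez\Delta u^\ez(u^\ez)^2\xi^6\,dx,$$
and the divergence-form expression \eqref{functional} applied with the same test $\phi=|Du^\ez|^2(u^\ez)^2\xi^6$. Equating the two yields a pointwise identity that we then manipulate to isolate the two good quantities on the left of the claimed inequality.

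The main computation expands
$$\phi_i=2(D^2u^\ez Du^\ez)_i(u^\ez)^2\xi^6+2|Du^\ez|^2u^\ez u^\ez_i\xi^6+6|Du^\ez|^2(u^\ez)^2\xi^5\xi_i$$
and uses the contractions $u^\ez_i(D^2u^\ez Du^\ez)_i=\Delta_\fz u^\ez$ together with the PDE $\Delta_\fz u^\ez=-\ez\Delta u^\ez-f^\ez$. After bookkeeping, the right-hand side of \eqref{functional} produces exactly $\int_U|D^2u^\ez Du^\ez|^2(u^\ez)^2\xi^6\,dx$ (cancelling the same quantity coming from \eqref{identityIII} up to a factor of two), the decisive term $\int_U u^\ez|Du^\ez|^4\xi^6\Delta u^\ez\,dx$, and several cross terms involving $\xi^5 D\xi$, $\ez\Delta u^\ez$, $f^\ez$ and $(D^2u^\ez Du^\ez)\cdot D\xi$.

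The key step is integration by parts on the $\Delta u^\ez$ term:
$$\int_U u^\ez|Du^\ez|^4\xi^6\Delta u^\ez\,dx=-\int_U|Du^\ez|^6\xi^6\,dx-4\int_U u^\ez|Du^\ez|^2\xi^6\Delta_\fz u^\ez\,dx-6\int_U u^\ez|Du^\ez|^4\xi^5(D\xi\cdot Du^\ez)\,dx,$$
which produces the desired $\int_U|Du^\ez|^6\xi^6\,dx$ on the left, while the remaining $\Delta_\fz u^\ez$ is again replaced via the PDE. All leftover $\Delta u^\ez$ terms are either estimated by $\ez(\Delta u^\ez)^2$ via Cauchy--Schwarz (giving the $\ez^3\int\xi^6$ and $\ez^{3/2}\int|D\xi|^{\cdot}(u^\ez)^3\xi^3$ contributions), or they carry a coefficient containing $f^\ez$ so that a further integration by parts transfers the derivative onto $f^\ez$, producing exactly the term $\int_U f^\ez_i u^\ez_i(u^\ez)^2\xi^6\,dx$ in the statement. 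Terms of the form $f^\ez\Delta u^\ez(u^\ez)^2\xi^6$ are also IBP'd in the same way so that only $f^\ez_iu^\ez_i$ (and no second derivatives of $f^\ez$) survive.

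The final step is a repeated application of Young's inequality to the cross terms: products like $|D^2u^\ez Du^\ez||u^\ez|\xi^3\cdot |Du^\ez|^2|u^\ez||D\xi|\xi^2$ are split into $\eta|D^2u^\ez Du^\ez|^2(u^\ez)^2\xi^6+C_\eta|Du^\ez|^4(u^\ez)^2|D\xi|^2\xi^4$, and further $|Du^\ez|^4(u^\ez)^2|D\xi|^2\xi^4$ is split into $\eta|Du^\ez|^6\xi^6+C_\eta|u^\ez|^6|D\xi|^6$; the $f^\ez\Delta u^\ez(u^\ez)^2\xi^6$ and analogous terms are balanced against $\ez(\Delta u^\ez)^2(u^\ez)^2\xi^6$ to yield the $(f|u^\ez|)^{3/2}\xi^6$ and $\ez^3$ contributions. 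Choosing $\eta$ small enough absorbs all good quadratic terms on the left. The main obstacle is purely bookkeeping: many error terms arise from IBP and one must keep precise track of the $\ez$-powers and of the exact interplay between $\Delta u^\ez$, $\Delta_\fz u^\ez$, and $f^\ez$ so that no second derivative of $f^\ez$ appears and the final constants depend only on absolute constants.
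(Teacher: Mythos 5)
Your proposal is correct and takes essentially the same approach as the paper: equate the representations \eqref{functional} and \eqref{identityIII} of $\mathbb I_\ez$ for the test function $\phi=|Du^\ez|^2(u^\ez)^2\xi^6$, replace $\Delta_\fz u^\ez$ via the PDE, integrate the $\Delta u^\ez\, |Du^\ez|^4 u^\ez\xi^6$ term by parts to produce $\int_U|Du^\ez|^6\xi^6\,dx$, integrate the $f^\ez\Delta u^\ez$ term by parts to produce $\int_U f^\ez_i u^\ez_i(u^\ez)^2\xi^6\,dx$, and close with repeated Young inequalities. The only organizational difference is that the paper first packages the generic integration by parts into Lemma \ref{identity2} and then specializes to $\alpha=2$, $\kappa=0$, $\xi\mapsto u^\ez\xi^3$, whereas you carry out the same computations directly with $\phi=|Du^\ez|^2(u^\ez)^2\xi^6$.
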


From Lemma \ref{Duez}  and the Sobolev imbedding we conclude the following uniform local Sobolev estimates of $u^\ez$.
\begin{lem}\label{uniform Sobolev} For  each $1\le p <\fz$, we have $ u^\ez\in W^{1,p}_\loc(U)$ uniformly in $\ez>0$, and moreover,
\begin{equation}
\label{uniformDuez}
\sup_{\ez\in(0,\ez_U]}\|Du^\ez\|_{L^p(B)}\le C(p, q,\osc_{2B}u,\|f\|_{C^0(2\overline B)},B,  \|Df\|  _{L^q(2B)})\quad\forall B\subset 2B\Subset U.
\end{equation}
\end{lem}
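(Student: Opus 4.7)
The plan is to apply Lemma \ref{Duez} to a suitable shift $u^\ez - c$ of $u^\ez$, which produces uniform $W^{1,2}_\loc$-bounds on $|Du^\ez|^2$ together with uniform $L^6_\loc$-bounds on $Du^\ez$, and then upgrade to arbitrary $L^p$ via the two-dimensional Sobolev embedding. By Lemma \ref{maxandboundary}, $\|u^\ez\|_{C^0(\overline U)} \le M$ uniformly in $\ez \in (0,\ez_U]$, with $M$ controlled by the parameters appearing in \eqref{uniformDuez}. Since equation \eqref{infty ezf} is invariant under the shift $u^\ez \mapsto u^\ez - c$, I replace $u^\ez$ by $u^\ez - c$ with $c$ chosen (e.g.\ $c = -M-1$) so that $u^\ez - c \ge 1$ on $2\overline B$ and $\|u^\ez - c\|_{C^0(2\overline B)} \le 2M+1$. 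Fix a cutoff $\xi \in C_c^\infty(U)$ with $0 \le \xi \le 1$, $\xi \equiv 1$ on $B$, $\operatorname{supp}\xi \Subset \tfrac{3}{2}B$, and $|D\xi| \lesssim R^{-1}$, $|D^2\xi| \lesssim R^{-2}$. Applying Lemma \ref{Duez} to $u^\ez - c$ and using $(u^\ez - c)^2 \ge 1$ on $\operatorname{supp}\xi$ yields
\begin{equation*}
\int_B \bigl|D|Du^\ez|^2\bigr|^2\,dx + \int_B |Du^\ez|^6\,dx \le C_0 + C_0\,\mathcal J,
\end{equation*}
where $C_0$ depends only on the parameters listed in \eqref{uniformDuez} and $\mathcal J := \bigl|\int_U f^\ez_i\,u^\ez_i\,(u^\ez - c)^2\,\xi^6\,dx\bigr|$.

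To control $\mathcal J$, H\"older's inequality with exponents $(q, q')$, $q' = q/(q-1)$, gives
\begin{equation*}
\mathcal J \le \|Df\|_{L^q(2B)}\,\|u^\ez - c\|_\infty^2\,\|Du^\ez\,\xi^6\|_{L^{q'}((3/2)B)}.
\end{equation*}
When $q \ge 6/5$ (so $q' \le 6$), a further H\"older estimate bounds the last factor by $C(q,B)\,\|Du^\ez\,\xi\|_{L^6}$, and Young's inequality with exponents $(6, 6/5)$ absorbs a small multiple of $\|Du^\ez\,\xi\|_{L^6}^6$ into the left-hand side. When $1 < q < 6/5$ (so $q' > 6$), I would perform an iterative Sobolev bootstrap on slightly enlarged balls: each step applies the same machinery with a larger cutoff and invokes Sobolev embedding to progressively raise the integrability of $Du^\ez$ until the absorption mechanism closes.

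Finally, from the uniform bound $\|Du^\ez\|_{L^6(B)} + \bigl\|\,|D|Du^\ez|^2|\,\bigr\|_{L^2(B)} \le C$, the product rule gives
\begin{equation*}
\bigl|D\bigl(|Du^\ez|^2\,\xi^3\bigr)\bigr| \le \xi^3\,\bigl|D|Du^\ez|^2\bigr| + 3\,\xi^2\,|D\xi|\,|Du^\ez|^2,
\end{equation*}
both of whose terms are uniformly in $L^2$, so $|Du^\ez|^2\,\xi^3$ is uniformly bounded in $W^{1,2}_0(U)$. The two-dimensional Sobolev embedding $W^{1,2}_0 \hookrightarrow L^r$ for every $r < \infty$ then yields $\|Du^\ez\|_{L^p(B)} \le C$ uniformly in $\ez$ for every $p < \infty$, which is exactly \eqref{uniformDuez}. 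The main obstacle is controlling the $f$-term $\mathcal J$ when $q$ is close to $1$, where $q' > 6$ and the direct H\"older--Young absorption fails; this necessitates the Sobolev-driven bootstrap sketched above, whose starting point rests crucially on the shift $u^\ez \mapsto u^\ez - c$ that makes $(u^\ez - c)^2 \ge 1$ on $\operatorname{supp}\xi$.
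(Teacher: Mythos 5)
There is a genuine gap for $q$ close to $1$. Your direct absorption of $\mathcal J$ against the $L^6$-term on the left only works when $q' = q/(q-1) \le 6$, i.e.\ $q \ge 6/5$. For $1 < q < 6/5$ you only sketch an ``iterative Sobolev bootstrap,'' but that bootstrap does not have a base case: the very first step already requires a uniform $L^{q'}$-bound on $Du^\ez$ with $q' > 6$, which is precisely what you are trying to prove, so the iteration cannot get started. The paper avoids this by reversing the order of operations. It first uses the Sobolev embedding
$$
\Bigl[\int_{2B}\bigl(|Du^\ez|^2\xi^3\bigr)^p\,dx\Bigr]^{2/p}
\le C(p,B)\int_{2B}\bigl|D\bigl(|Du^\ez|^2\xi^3\bigr)\bigr|^2\,dx
\le C + C\int_{2B}|Du^\ez|\,|Df^\ez|\,\xi^6\,dx,
$$
so that the quantity to be absorbed is $L^{2p}$ of $Du^\ez$ rather than $L^6$. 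Then H\"older with exponents $\bigl(2p,\tfrac{2p}{2p-1}\bigr)$ gives
$$
\int_{2B}|Du^\ez|\,|Df^\ez|\,\xi^6\,dx
\le \Bigl[\int_{2B}\bigl(|Du^\ez|\xi^3\bigr)^{2p}\,dx\Bigr]^{1/2p}\,\|Df^\ez\|_{L^{2p/(2p-1)}(2B)},
$$
and the first factor is the $1/4$-th power of the left-hand side, so Young with exponents $(4,4/3)$ absorbs it. The only requirement is $2p/(2p-1)\le q$, which holds for all $p\ge q/(2(q-1))$; since $p$ may be taken arbitrarily large at the outset (smaller $p$ follow by H\"older), this closes the argument for every $q>1$ in a single shot, no iteration needed. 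Your steps up to and including the invocation of Lemma \ref{Duez} and the shift $u^\ez\mapsto u^\ez - c$ coincide with the paper's; the missing idea is precisely this reordering, which lets the absorption exponent be $2p$ instead of $6$.
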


\begin{proof}
By the H\"older inequality, it suffices to consider all $p$ sufficiently large such that $2p/(2p-1)\le q$.
Up to considering $u^\ez -a$ and $u-a$ for $a\in \rr$, we may assume that $1\le u^\ez \le M$ for all $\ez\in(0,\ez_U]$.
By Lemma \ref{Duez},
for any $0\le \xi\in C_c^2(U)$ we have
\begin{align*}
 &\int_{U}  |D^2u^\ez Du^\epsilon |^2   \xi^6 \, dx+
 \int_{U} |Du^\ez|^{6} \xi^6 \,dx\\
 &\quad\le  C M^6
\int_{U}  (|D \xi|^6+|D^2\xi|^3\xi^3)\,dx + CM^2\int_{U}
    |D u^\ez|    |Df^\ez|   \xi^6   \, dx + C\ez^2 \int_U|D\xi|^2\,dx.
 \end{align*}
Therefore for any ball $B\Subset 2B\Subset  U$, let $\xi$ be a cut-off function supported in $2B$ such that $\xi=1$ on $B$,
$|D\xi|\le \frac C R$ and $|D^2\xi|\le \frac C{R^2}$, where $R$ is the Radius of $B$.
We obtain
$$\int_{U}  |D^2u^\ez Du^\epsilon |^2\xi^6    \, dx+
 \int_{U} |Du^\ez|^{6} \xi^6 \,dx \le  C (\|f\|_{L^\fz(U)}, M,B )+C(M) \int_{U}
    |D u^\ez|    |Df^\ez| \xi^6     \, dx  .$$
By Sobolev's imbedding, we obtain
\begin{align*}
&\left[\int_{2B}(|Du^\epsilon |^ 2\xi^3 )^p \,dx\right]^{2/p}\\
&\quad\le C(p,B)  \int_{2B}  |D (| Du^\epsilon |^2\xi^3)|^2    \, dx   \\
&\quad\le C(p,B) \int_{2B}  |D  | Du^\epsilon |^2|^2 \xi^6   \, dx   +C(p,B)  \int_{2B}  |D \xi|^2 | Du^\epsilon |^4\xi^4    \, dx   \\
 &\quad\le C(p,B) \int_{2B}     |D  ^2u^\ez Du^\epsilon |^2|^2 \xi^6   \, dx   +C(p,B)  \int_{2B}   |Du^\epsilon |^6\xi^6    \, dx  +C(p,B)
\\
&\quad\le   C (p,\|f\|_{C^0(\overline U)}, M,B )+C(p,M) \int_{2B}
    |D u^\ez|    |Df^\ez| \xi^6     \, dx.
 \end{align*}
Since \begin{align*} \int_{2B}
    &|D u^\ez|    |Df^\ez| \xi^6     \, dx  \\
    &\le  \left [\int_{2B}    |D u^\ez| ^{2p}   \xi^{3p}     \, dx\right]^{1/2p} \left[ \int_{2B}    |Df^\ez| ^{(2p-1)/2p}      \, dx\right ]^{2p/(2p-1)}\\
&\le \frac12 \left[\int_{2B}
    |D u^\ez| ^{2p}   \xi^{3p}     \, dx\right]^{2/ p} + C(p,M) \left[ \int_{2B}
  |Df^\ez| ^{2p/(2p-1)}      \, dx \right]^{2(2p-1)/3p},
 \end{align*}
by $2p/(2p-1)<q$ we arrive at
\begin{align*}
\left[\int_{2B}(|Du^\epsilon |^ 2\xi^3)^p \,dx\right]^{2/p}
&\le   C (p,f, M,B )+ C(p,M) \left[ \int_{2B}
  |Df^\ez| ^{2p/(2p-1)}      \, dx \right]^{2(2p-1)/3p}\\
  &\le   C (p,f, M,B )+ C(p,q,M,B)\left[ \int_{2B}
  |Df^\ez| ^q      \, dx \right]^{4/3q}\\
    &\le   C (p, q, \|f\|_{C^0(\overline U)}, M,B,\|D f\|_{L^q(2B )} ).
 \end{align*}
This finishes the proof of Lemma \ref {uniform Sobolev}.
\end{proof}

\begin{rem}\label{sobolevbv}
\begin{enumerate}
\item[(i)] Under $f\in W^{1,\fz}_\loc(\Omega)$, it was proved by \cite{l,lmz} via the maximal principle that $u^\ez\in W^{1,\fz}_\loc(U)$ uniformly in $\ez>0$; see \cite{es11a} for the case $f\equiv0$.  This implies Lemma \ref{uniform Sobolev}.
But when $f\notin W^{1,\fz}_\loc(\Omega)$, the approach
  in \cite{es11a,l,lmz} via maximal principle fails. % since uniform $W^{1,\fz}_\loc(U)$-norms of $f$ is  required there.

\item[(ii)] Under $f\in W^{1,q}_\loc(\Omega)\cap C^0(\Omega)$ with $q\in(1,\fz)$, Lemma 3.4 only gives the uniform $W^{1,p}_\loc(U)$-estimates of  $u^\ez $ for each $1\le p<\fz$ but not $p=\fz$.
When $p=\fz$, the approach in Lemma 3.4 fails since the Sobolev imbedding $W^{1,2}_\loc\to L^\fz_\loc(U)$  fails.

\item[(iii)] Under $f\in  W^{1,1}_\loc(\Omega)\cap C^0(\Omega)$ or $f\in BV_\loc(\Omega)\cap C^0(\Omega)$,
  for any given $p\in[1,\fz]$, the uniform $W^{1,p}_\loc(U)$-estimates of  $u^\ez $   is still unavailable.
The approach in Lemma 3.4 fails. Indeed, since
  $f^\ez$ only have uniform $W^{1,1} (U)$- or $BV(U)$-estimates,  the uniform  $W^{1,\fz}_\loc(U)$-estimates of  $u^\ez $
  is required   to get uniform estimates for the term
  $\int_{U}          f^\ez_i u^\ez_i   (u^\ez)^2   \xi^6    \,dx$     in Lemma \ref{Duez}.
  But the failure of the Sobolev imbedding $W^{1,2}_\loc\to L^\fz_\loc(U)$ does not allows to control
 $W^{1,\fz}_\loc(U)$-norms of  $u^\ez $ uniformly.

   \item[(iv)] Considering Lemma \ref{lipu}, we expect
     that under only $f\in    C^0(\Omega)$ one would have $u^\ez\in W^{1,\fz}_\loc(U)$ uniformly  in $\ez>0$.
     To prove this, new ideas are definitely  required.
     \end{enumerate}
\end{rem}

Letting $\phi=|Du^\ez|^2 (|Du^\ez|^2+\kappa)^{\alpha/2} \xi^2$  in  \eqref{functional}  we have the  following
  Sobolev estimates for $|Du^\ez|^{\alpha}$  or $(|Du^\ez|^2+\kappa)^{\alpha/2}$ when  $ \alpha>3/2$. The proof is postponed to Section 6.

\begin{lem}\label{DDuez}
Let $\xi\in C_c^2(U)$ and $\kappa>0$.
\begin{enumerate}
\item[(i)] If $  \alpha =2$, then
\begin{align*}
& \int_U  |D^2u^\ez Du^\epsilon  |^2 \xi^2  \,dx  +
 {\ez}   \int_U  (\Delta u^\ez)^2   \xi^2   \,dx \\
 &\quad\le C    \int_{U}|Du^{\ez}| ^4 (|D\xi|^2+|D^2\xi||\xi|) \,dx + C\left|\int_{U}    u^\ez_i   f^\ez_i \xi^2  \, dx\right| +C  \ez^2 \int_{U}     |D\xi|^2       \, dx.
\end{align*}

\item[(ii)] If $  \alpha \ge3$, then
\begin{align*}
& \int_{U}  |D^2u^\ez Du^\epsilon  |^2|Du^{\ez}| ^{2\alpha-4}  \xi^2 \,dx  +
 {\ez}   \int_U  (\Delta u^\ez)^2 |Du^{\ez}| ^{2\alpha-4}      \xi^2\,dx\\
&\quad\le    C(\alpha) \int_{U}|Du^{\ez}| ^{ 2\alpha }(|D\xi|^2+|D^2\xi||\xi|) \,dx
 +  C(\alpha) \left |\int_{U}        u^\ez_i f^\ez_i   |Du^{\ez}| ^{2\alpha-4} \xi^2\, dx\right|\\
  &\quad\quad+   C(\alpha) \ez\left [\int_{U}     (\Delta  u^\epsilon)^2       \xi^2  \, dx\right]^{1/2}   \left[\int_{U}     (f^\ez )^2 |Du^\epsilon |^{4  \alpha -12}      \xi^2  \, dx\right]^{1/2}.
\end{align*}
\item[(iii)] If $2< { \alpha }<3$, then
\begin{align*}
& \int_{U}  |D^2u^\ez Du^\epsilon |^2  (|Du^\ez|^2+\kappa)^{   { \alpha-2}}  \xi^2 \, dx
+   \ez\int_{U}  | \Delta u^\epsilon |^2  (|Du^\ez|^2+\kappa)^{{ \alpha-2}} \xi^2 \, dx\\
&\quad\quad\quad+ \int_{U}
   ( f^\ez)^2  (|Du^\ez|^2+\kappa)^{   \az-3}  \xi^2  \, dx\\
&\quad\le C ({ \alpha})   \int_{U}(|Du^{\ez}|^ 2+\kappa)^{ \alpha}(|D\xi|^2+|D^2\xi||\xi|)  \,dx
 +  C ({ \alpha}) \left|\int_{U}         u^\ez_i f^\ez_i  (|Du^{\ez}| ^2+\kappa) ^{ { \alpha-2}} \xi^2\, dx\right|\\
  &\quad\quad+   C(\alpha)\ez \kappa^{ \alpha-3}\left[\int_{U}     (\Delta  u^\epsilon)^2       \xi^2  \, dx\right]^{1/2}  \left [\int_{U}     (f^\ez )^2      \xi^2  \, dx\right]^{1/2}\\
&\quad\quad +C(\alpha)\kappa^{{ \alpha-3/2}} \int_{U}(
   |D  f^\ez|    \xi^2 +f^\ez|D\xi||\xi|)\, dx.
\end{align*}
\item[(iv)] If
  $3/2< \alpha<2$, then
\begin{align*}
&  \int_{U}  |D^2u^\ez Du^\epsilon |^2  (|Du^\ez|^2+\kappa)^{   { \alpha-2}}  \xi^2 \, dx
+   \ez\int_{U}  | \Delta u^\epsilon |^2  (|Du^\ez|^2+\kappa)^{{ \alpha-2}} \xi^2 \, dx\\
&\quad\le  C(\alpha) \int_{U}(|Du^{\ez}|^ 2+\kappa)^{ \alpha}(|D\xi|^2+|D^2\xi||\xi|)  \,dx
 +  C(\alpha) \left|\int_{U}        u^\ez_i f^\ez_i  (|Du^{\ez}| ^2+\kappa) ^{  \alpha-2 } \xi^2\, dx\right|\\
  &\quad\quad+   C(\alpha)  \ez \left[\int_{U}     (\Delta  u^\epsilon)^2
     \xi^2  \, dx\right]^{1/2}   \left[\kappa^{ 2\alpha-6}\int_{U}     (f^\ez )^2   \xi^2  \, dx + \kappa^{ 2\alpha-5 }  \int_{U}
    |D^2u^\ez Du^\ez|^2 \,dx\right]^{1/2}   \\
&\quad\quad +C(\alpha)  \kappa^{{ \alpha-3/2}} \int_{U}(
   |D  f^\ez|    \xi^2 +f^\ez|D\xi||\xi|)\, dx.
\end{align*}
\end{enumerate}
\end{lem}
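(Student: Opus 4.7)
The plan is to evaluate the functional $\mathbb I_\ez$ on the weighted test function $\phi = |Du^\ez|^2\,\psi$ with $\psi = (|Du^\ez|^2+\kz)^{\alpha-2}\xi^2$ (setting $\kz = 0$ in the unregularized cases (i) and (ii)) in two complementary ways. On the one hand, \eqref{identityIII} yields
\begin{equation*}
\mathbb I_\ez(\phi) = \int_U \bigl[|D^2u^\ez Du^\ez|^2 + \ez(\Delta u^\ez)^2 + f^\ez\Delta u^\ez\bigr](|Du^\ez|^2+\kz)^{\alpha-2}\xi^2\,dx,
\end{equation*}
which already displays precisely the positive square quantities that should sit on the left in the statement. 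On the other hand, the divergence representation \eqref{functional} gives $\mathbb I_\ez(\phi) = \tfrac12\int_U[\Delta u^\ez u^\ez_i - u^\ez_{ij}u^\ez_j]\phi_i\,dx$. After equating the two and rearranging, the positive square integrals migrate to the left while the right-hand side acquires an $f^\ez$-error integral and boundary-type pieces supported on $\{|D\xi|>0\}$.

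The core calculation is to expand $\phi_i = g_i\psi + g\psi_i$ with $g = |Du^\ez|^2$ and then substitute the two PDE-driven algebraic identities $u^\ez_i g_i = 2\Delta_\infty u^\ez = -2\ez\Delta u^\ez - 2f^\ez$ and $u^\ez_{ij}u^\ez_j g_i = 2|D^2u^\ez Du^\ez|^2$. Using the weight identity $(g+\kz)^{\alpha-2} + (\alpha-2)g(g+\kz)^{\alpha-3} = (\alpha-1)(g+\kz)^{\alpha-2} - (\alpha-2)\kz(g+\kz)^{\alpha-3}$, the divergence side collapses to a linear combination of weighted $|D^2u^\ez Du^\ez|^2$-, $\ez(\Delta u^\ez)^2$- and $f^\ez\Delta u^\ez$-integrals plus boundary terms of the schematic form $\int_U g(g+\kz)^{\alpha-2}\xi[\Delta u^\ez u^\ez_i - u^\ez_{ij}u^\ez_j]\xi_i\,dx$. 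To get rid of the remaining $\int f^\ez\Delta u^\ez\,\eta\,dx$ contributions I would integrate by parts once more, $\int_U f^\ez\Delta u^\ez\,\eta\,dx = -\int_U f^\ez_i u^\ez_i\eta\,dx - \int_U f^\ez u^\ez_i\eta_i\,dx$, with $\eta$ taken successively equal to $(g+\kz)^{\alpha-2}\xi^2$ and $\kz(g+\kz)^{\alpha-3}\xi^2$; expanding $\eta_i$ reintroduces $u^\ez_i g_i$ and, thanks to the PDE, produces simultaneously the desired error integral $\int_U u^\ez_i f^\ez_i(g+\kz)^{\alpha-2}\xi^2\,dx$, the favourable quadratic $\int_U(f^\ez)^2(g+\kz)^{\alpha-3}\xi^2\,dx$ that appears on the left in part (iii), and residual $\ez\Delta u^\ez f^\ez$ pieces that are handled by Cauchy--Schwarz against $\ez(\Delta u^\ez)^2$ (accounting for the $\ez$-error terms in (ii)--(iv)). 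Finally the boundary terms are tamed by Young's inequality so that a small fraction of $|D^2u^\ez Du^\ez|^2(g+\kz)^{\alpha-2}\xi^2$ and of $\ez(\Delta u^\ez)^2(g+\kz)^{\alpha-2}\xi^2$ is absorbed on the left, the residual being bounded by $\int_U(g+\kz)^\alpha(|D\xi|^2 + |D^2\xi||\xi|)\,dx$.

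The main obstacle will be sign and exponent tracking across the four regimes. The leading coefficient in front of $|D^2u^\ez Du^\ez|^2(g+\kz)^{\alpha-2}\xi^2$ on the left simplifies to $\alpha > 0$ throughout the range of interest, but the correction weighted by $\kz(g+\kz)^{\alpha-3}$ flips sign at $\alpha = 2$ and the factor $(g+\kz)^{\alpha-3}$ is singular as $g\to 0$ whenever $\alpha < 3$; this is precisely why parts (iii)--(iv) demand $\kz > 0$ and generate the $\kz^{\alpha-3/2}$ and $\kz^{2\alpha-5}$ errors recorded in the statement. In part (iii) ($2<\alpha<3$) the $(f^\ez)^2(g+\kz)^{\alpha-3}$-integral has the good sign and stays on the left, while the companion $|D^2u^\ez Du^\ez|^2(g+\kz)^{\alpha-3}$ piece must be removed by one further integration by parts against the PDE, producing the $\kz^{\alpha-3/2}\int_U(|Df^\ez|\xi^2 + f^\ez|D\xi||\xi|)\,dx$ term. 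Part (iv) ($3/2<\alpha<2$) has the opposite sign, so the same piece has to be absorbed on the left at the cost of the $\kz^{2\alpha-5}\int_U|D^2u^\ez Du^\ez|^2\,dx$ error; the threshold $\alpha > 3/2$ enters exactly here, ensuring positivity of the resulting combined coefficient. Cases (i) ($\alpha = 2$) and (ii) ($\alpha \ge 3$) take $\kz = 0$ and all of these complications collapse to the clean bounds.
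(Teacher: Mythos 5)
Your proposal follows essentially the same route as the paper: test the two representations of $\mathbb I_\ez$ — identity \eqref{identityIII} and the divergence form \eqref{functional} — against $\phi=|Du^\ez|^2(|Du^\ez|^2+\kappa)^{\alpha-2}\xi^2$, integrate by parts once more on the resulting $f^\ez\Delta u^\ez$-integral, substitute the PDE, and absorb the weighted $|D^2u^\ez Du^\ez|^2$ remainders by Young's inequality, with $\alpha>3/2$ entering exactly as the positivity threshold for the combined coefficient in case (iv). One small bookkeeping inaccuracy: in case (iii) the $\kappa^{\alpha-3/2}\int(|Df^\ez|\xi^2+f^\ez|D\xi||\xi|)\,dx$ error actually arises from the $\kappa$-weighted pieces $\kappa(\alpha-2)\int u^\ez_i f^\ez_i(|Du^\ez|^2+\kappa)^{\alpha-3}\xi^2\,dx$ and its $\xi_i$-companion in the expansion of $\eta_i$, not from removing a residual $|D^2u^\ez Du^\ez|^2(|Du^\ez|^2+\kappa)^{\alpha-3}$ term (for $2<\alpha<3$ the sign of $\alpha-2$ is favorable and no such term needs to be cancelled); the $|D^2u^\ez Du^\ez|^2$-absorption you describe is the mechanism of case (iv), where the paper replaces $f^\ez$ by $-\Delta_\fz u^\ez-\ez\Delta u^\ez$ and bounds $(\Delta_\fz u^\ez)^2\le|D^2u^\ez Du^\ez|^2|Du^\ez|^2$.
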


As a consequence of Lemma \ref{DDuez},  we  have the following uniform  Sobolev estimates of $|Du^\ez|^\az$ or $(|Du^\ez|^2+\kappa)^{\alpha/2}$ for all $\alpha>0$.
\begin{lem}\label{uniform DDuez}

(i) If   $  \alpha =2$  or $ \alpha\ge 3$,   we have
$ D|Du^\ez| ^{  \alpha  } \in L^2_\loc(U) $ uniformly in $\ez>0$.

(ii) If $  \alpha \in (0,2)\cup(2,3)$ and   $\kappa>0$,  we have
$D(|Du^\ez|^2+\kappa )^{\alpha/2} \in L^{ 2}_\loc(U)$ uniformly in $\ez>0$.

(iii) If  $  \alpha \in (3/2,2)\cup(2,3)$, then  for all $V\Subset U$ we have
 $$\sup_{\kz\in(0,1)}\limsup_{\ez\to0}\|D(|Du^\ez|^2+\kappa )^{\alpha/2}\|_ {L^{ 2}(V)}<\fz.$$

(iv) If $\alpha\in(0,3/2]$ and $p\in[1,3/(3-\alpha))$, then for  all $V\Subset U$  we have
 $$\sup_{\kz\in(0,1)}\limsup_{\ez\to0}\|D(|Du^\ez|^2+\kappa )^{\alpha/2}\|_{L^{ p}(V)}<\fz.$$

%

%(ii) If  $  \alpha \in (3/2,2)\cup(2,3)$, then for   each $\kappa\in(0,1)$  we have
%$D(|Du^\ez|^2+\kappa )^{\alpha/2} \in L^{ 2}_\loc(U)$ uniformly in $\ez$ (but not necessarily in $\kappa$).
%
%(iii) If $\alpha\in(0,3/2)$ and $p\in(1,3/(3-\alpha))$, then for   each $\kappa\in(0,1)$  we have
%$D(|Du^\ez|^2+\kappa )^{\alpha/2}\in L^{ p}_\loc(U)$ uniformly in $\ez$  (but not  necessarily  in $\kappa$).
\end{lem}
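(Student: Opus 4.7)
All four parts follow from Lemma \ref{DDuez}, the uniform integrability of $u^\ez$ from Lemma \ref{uniform Sobolev}, and (for Part~(iv)) a non-degeneracy estimate extracted from the approximation equation \eqref{infty ezf}. Throughout, I will write $g^\ez:=|Du^\ez|^2+\kz$ and fix $\xi\in C_c^2(U)$ with $\xi\equiv 1$ on the given $V\Subset U$. The chain-rule identity $|D(g^\ez)^{\alpha/2}|^2 = \alpha^2(g^\ez)^{\alpha-2}|D^2u^\ez Du^\ez|^2$ converts the desired $W^{1,p}_\loc$-bounds into bounds on the weighted Hessian integrals appearing on the left-hand side of Lemma \ref{DDuez}.

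For Parts~(i)-(iii) I would apply the appropriate case of Lemma \ref{DDuez} and control the right-hand side term by term: the integrals $\int(g^\ez)^{\alpha}(|D\xi|^2+|D^2\xi||\xi|)\,dx$ and $|\int u^\ez_i f^\ez_i(g^\ez)^{\alpha-2}\xi^2\,dx|$ are uniformly bounded in $\ez$ via Lemma \ref{uniform Sobolev} and H\"older (using $Du^\ez\in L^{q'}_\loc$ and $Df^\ez\in L^q_\loc$ uniformly), while any factor of the form $\ez(\int(\Delta u^\ez)^2\xi^2\,dx)^{1/2}$ is $O(\ez^{1/2})$ because Part~(i) gives $\ez\int(\Delta u^\ez)^2\xi^2\,dx\le C$. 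For~(ii) with $\alpha\in(0,2)$ and $\kz>0$ fixed, the trivial pointwise bound $(g^\ez)^{(\alpha-2)/2}\le\kz^{(\alpha-2)/2}$ reduces matters to Part~(i). For~(iii), the $\ez$-dependent terms containing $\kz^{\alpha-3}$, $\kz^{2\alpha-5}$ or $\kz^{2\alpha-6}$ vanish in $\limsup_{\ez\to 0}$ for each fixed $\kz$, and the residual prefactor $\kz^{\alpha-3/2}$ is bounded on $\kz\in(0,1)$ because $\alpha>3/2$.

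The decisive step in Part~(iv) is an upgrade of $(g^\ez)^{\beta-3}$-integrability from the range $\beta\in(2,3)$ built directly into Lemma \ref{DDuez}(iii) to all $\beta>3/2$. From \eqref{infty ezf} we have $Du^\ez\cdot D^2u^\ez Du^\ez=-f^\ez-\ez\Delta u^\ez$, so Cauchy-Schwarz yields
\[
(f^\ez)^2 \le 2(Du^\ez\cdot D^2u^\ez Du^\ez)^2+2\ez^2(\Delta u^\ez)^2\le 2g^\ez\,|D^2u^\ez Du^\ez|^2+2\ez^2(\Delta u^\ez)^2.
\]
Multiplying by $(g^\ez)^{\beta-3}\xi^2$ and integrating, the first term becomes $2\int(g^\ez)^{\beta-2}|D^2u^\ez Du^\ez|^2\xi^2\,dx$, uniformly bounded in $\limsup_{\ez\to 0}$ and $\sup_{\kz\in(0,1)}$ for every $\beta>3/2$ by Lemma \ref{DDuez}(iii)/(iv) (as handled in Part~(iii)); the second contributes $\le 2\ez\,\kz^{\beta-3}C\to 0$ as $\ez\to 0$ for each fixed $\kz$ (using Part~(i)). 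Since $f^\ez\ge\tfrac12\inf_V f>0$ for small $\ez$, this gives
\[
\sup_{\kz\in(0,1)}\limsup_{\ez\to 0}\int_V(g^\ez)^{\beta-3}\,dx<\infty\quad\text{for every }\beta>3/2.
\]
Finally, applying $D(g^\ez)^{\alpha/2}=(\alpha/\beta)(g^\ez)^{(\alpha-\beta)/2}D(g^\ez)^{\beta/2}$ and H\"older with exponents $2/p,\,2/(2-p)$ (both finite since $p<3/(3-\alpha)\le 2$ for $\alpha\le 3/2$):
\[
\int_V|D(g^\ez)^{\alpha/2}|^p\,dx\le C(\alpha,\beta)\Big(\int|D(g^\ez)^{\beta/2}|^2\xi^2\,dx\Big)^{p/2}\Big(\int_V(g^\ez)^{(\alpha-\beta)p/(2-p)}\,dx\Big)^{(2-p)/2}.
\]
Choosing any $\beta\in(3/2,\,\alpha+3/p-3/2)$ — a nonempty interval precisely when $p<3/(3-\alpha)$ — gives $(\alpha-\beta)p/(2-p)>-3/2$, so the displayed negative-power bound controls the second factor (with $\beta':=(\alpha-\beta)p/(2-p)+3>3/2$) and Part~(iii) controls the first.

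The hard part is the upgrade step in Part~(iv): without the equation-driven inequality $(f^\ez)^2\lesssim g^\ez|D^2u^\ez Du^\ez|^2+\ez^2(\Delta u^\ez)^2$ and the positivity $f>0$, the naive H\"older only reaches $p\in[1,2/(3-\alpha))$, falling short of the sharp endpoint $p=3/(3-\alpha)$ realized by $w=-x_1^{4/3}$. It is precisely this combination that extends the range of $\beta$ allowed for the $(g^\ez)^{\beta-3}$-integrability and thus closes the gap to the sharp exponent.
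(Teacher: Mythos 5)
Your proof is correct and takes essentially the same route as the paper: parts (i)--(iii) follow from Lemma \ref{DDuez} with suitable cut-offs, and part (iv) hinges on the same key observation that the equation gives $(f^\ez)^2(|Du^\ez|^2+\kz)^{\gamma-3}\le C\,|D(|Du^\ez|^2+\kz)^{\gamma/2}|^2 + C\ez^2\kz^{\gamma-3}(\Delta u^\ez)^2$, so that $f$ being bounded away from zero upgrades the negative-power integrability of $|Du^\ez|$ to the full range $\gamma>3/2$. The only cosmetic difference is that you split $|D(g^\ez)^{\alpha/2}|^p$ by integral H\"older where the paper splits pointwise by Young's inequality before integrating; the exponent bookkeeping ($\beta\in(3/2,\alpha+3/p-3/2)$, auxiliary exponent in $(3/2,3)$) is the same.
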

\begin{proof}
(i)
Fix arbitrary    ball $B\Subset 2 B\Subset U$ with radius $R$.  For $\alpha>3/2$ and $\kappa\in[0,1]$, by Lemma \ref{uniform Sobolev}  we  have
\begin{align}
M_{ \alpha}(B):=&\sup_{\kappa\in[0,1]}\sup_{\ez\in(0,\ez_U]}\left [\bint_{2B} (|Du^\ez|^2+\kappa)^{ 2\alpha }\,dx+\int_{2B}|Df^\ez|(|Du^\ez|^{2}+\kappa)^{{ \alpha-3/2}}\,dx\right ]\nonumber\\
\le& C(\alpha)\sup_{\ez\in(0,\ez_U]}  \{1+\|
Du^\ez\| ^{ 2\alpha }_{L^{ 2\alpha }(2B)}+ \|Df^\ez\|^q_{L^q(B)}[1+ \|
Du^\ez\| ^{2\alpha-3}_{L^{(2\alpha-3)q/(q-1)}(2B)}] \}\nonumber\\
<&\fz.
\end{align}
By taking suitable cut-off functions $\xi$ in Lemma \ref{DDuez} (i), we have
\begin{align*}
&   \int_B  |D | Du^\epsilon  |^2 |^2   \,dx +\ez\int_B (\Delta u^\ez)^2 \,dx
  \le C  M_1(B) +C  =:\wz M_1(B)  <\fz
\end{align*}

  For $ \alpha\ge 3$, by Lemma \ref{DDuez} (ii) with a suitable  cut-off functions $\xi$   we have
\begin{align*}
&
 \int_{B}  |D | Du^\epsilon|^{\alpha}  |^2     \,dx  +
 {\ez}   \int_B  (\Delta u^\ez)^2 |Du^{\ez}| ^{2\alpha-4}       \,dx \\
&\quad\le C(\alpha) M_{ \alpha}(B) +   C(\alpha)\ez^{1/2} [\wz M_1(B) ]^{1/2}
\|f^\ez\|_{C(2\overline B)} \|Du^\ez\|^{2{ \alpha-1}+4}_{ L^{4{ \alpha-1}-8}(2B)},
\end{align*}
which, by Lemma \ref{uniform Sobolev} and $\|f^\ez\|_{C(2\overline B)} \le 2\|f\|_{C^0(\overline U)}$, is bounded uniformly in $\ez$.

(ii) For $\alpha\in(0,2)$ and $\kz>0$, note that
$$ |D (|Du^\ez|^2+\kz)^{\alpha/2}|= \frac{\alpha }2(|Du^\ez| ^2+\kappa)^{ \alpha/2-1 } |D|Du^\ez|^2|\le \frac{\alpha }2\kappa ^{ \alpha/2-1 } |D|Du^\ez|^2|,
$$
and hence
  $$\|D (|Du^\ez|^2+\kz)^{\alpha/2}\|_{L^2(V)}\le   \kappa^{ \alpha/2-1 }\|D|Du^\ez|^2\|_{L^2(V)}\quad\forall V\Subset U.$$
This together with
   $D|Du^\ez|^2\in L^2_\loc(U)$ uniformly in $\ez>0$  implies that
   $D (|Du^\ez|^2+\kz)^{\alpha/2}\in  L^2_\loc(U)$ uniformly in $\ez>0$.

  For $\alpha\in(2,3)$ and $\kz>0$, note that
\begin{align*}
|D (|Du^\ez|^2+\kz)^{\alpha/2}|&= \frac{\alpha }4(|Du^\ez| ^2+\kappa)^{ \alpha/4-1 } |D(|Du^\ez|^2+\kz)^2|\\
&\le \frac{\alpha }4 \kappa ^{ \alpha/4-1 } [|D |Du^\ez|^4|+  2\kz|D|Du^\ez|^2|].
\end{align*}
Hence,
 $$\|D (|Du^\ez|^2+\kz)^{\alpha/2}\|_{L^2(V)}\le   C(\alpha)[\kappa^{ \alpha/4-1 }\|D|Du^\ez|^4\|_{L^2(V)}+ \kappa^{ \alpha/4+1 }\|D|Du^\ez|^2\|_{L^2(V)}].$$
From this   and
   $D|Du^\ez|^2,\,D|Du^\ez|^4\in L^2_\loc(U)$ uniformly in $\ez>0$, it follows that
   $D (|Du^\ez|^2+\kz)^{\alpha/2}\in  L^2_\loc(U)$ uniformly in $\ez>0$, as desired.

(iii) Let $B\Subset 4B\subset U$. For $2<  \alpha <3$ and $\kappa\in(0,1)$,   by Lemma \ref{DDuez} (ii) with a suitable  cut-off functions $\xi$, we have
\begin{align}\label {ex1-}
&
\int_{B}  |D   (|Du^\ez|^2+\kappa)^{   \alpha}  |^2    \, dx
+   \ez\int_B  | \Delta u^\epsilon |^2  (|Du^\ez|^2+\kappa)^{{ \alpha-2}}  \, dx \nonumber
\\
&\quad\le C(\alpha)M_{ \alpha}(B) +   C(\alpha)\ez^{1/2} \kappa^{ \alpha-3} [\wz M_1( B)]^{1/2}
\|f^\ez\|_{C^0(2\overline B)} \nonumber\\
&\quad\quad +C(\alpha)\kappa^{{ \alpha-3/2}} [ \|D  f^\ez\|_{L^1(2B)}+R\|f^\ez\|_{C^0(2\overline B)}]^{1/2}.
\end{align}
  For $3/2< \alpha <2$ and $\kappa\in(0,1)$, by Lemma \ref{DDuez} (iii) with a suitable  cut-off functions $\xi$, we have
\begin{align}\label{ex2-}
&
\int_{B}  |D   (|Du^\ez|^2+\kappa)^{   \alpha}  |^2 \, dx
+   \ez\int_B  | \Delta u^\epsilon |^2  (|Du^\ez|^2+\kappa)^{{ \alpha-2}}  \, dx\nonumber\\
&\quad\le C(\alpha)M_{ \alpha}(B) +    C(\alpha)\ez^{1/2} (\wz M_1( B) )^{1/2}    [\kappa^{ \alpha-3}
  \|f^\ez\|_{C^0(2\overline B)} + \kappa^{ \alpha-5/2} [\wz M_1( B) ]^{1/2}] \nonumber \\
&\quad\quad +C(\alpha)\kappa^{{ \alpha-3/2}} [ \|D  f^\ez\|_{L^1(2B)}+R\|f^\ez\|_{C^0(2\overline B)}]^{1/2}.
\end{align}

 Since  $\|f^\ez\|_{C^0(2\overline B)} \le \|f\|_{C^0(\overline{\wz  U})}$ and  $$\|D f^\ez\|_{L^1(2B)} \le C(B)\|Df^\ez\|_{L^q(2B)}\le 2C(B)\|Df\|_{L^q(\wz U)},$$
 letting $\ez\to0$ in \eqref{ex1-} and \eqref{ex2-}, by $\alpha>3/2$ we have
\begin{align*}
&
\limsup_{\ez\to0}\int_{B}  |D   (|Du^\ez|^2+\kappa)^{   \alpha}  |^2    \, dx
+   \ez\int_B  | \Delta u^\epsilon |^2  (|Du^\ez|^2+\kappa)^{{ \alpha-2}}  \, dx \nonumber
\\
&\quad\le C(\alpha)M_{ \alpha}(B)
  +C(\alpha)\kappa^{{ \alpha-3/2}} \sup_{\ez\in(0,1)}[ \|D  f^\ez\|_{L^1(2B)}+R\|f^\ez\|_{C^0(2\overline B)}]^{1/2}\nonumber\\
  &\quad\le C(\alpha)M_{ \alpha}(B)
  +C(\alpha)   [ \|Df\|_{L^q(\wz U)}+R\|f \|_{C^0(\overline {\wz U})}]^{1/2}
\end{align*}
as desired.

(iv) For $\alpha\in(0,3/2)$ and $p\in(1,3/(3-\alpha))$, observing $3/2<3/p-3/2+\alpha $  we let $\beta \in (3/2,3/p-3/2+\alpha)$.
For   $\kappa\in(0,1)$,  write
\begin{align*} |D(|Du^\ez|^2+\kappa)^{\alpha/2} |^p &= C(\alpha,\beta)
(|Du^\ez|^2+\kappa)^{p(\alpha-\beta)/2} |D(|Du^\ez|^2+\kappa)^{\beta/2}|^p\nonumber\\
&
\le C(\alpha,\beta,p)[|D|Du^\ez|^\beta|^2+ (|Du^\ez|^2+\kappa)^{ p(\alpha-\beta)/(2-p)}]. \nonumber
\end{align*}
Noting    $\beta<3/p-3/2+\alpha$, that is, $\beta-\alpha<3/p-3/2=3(2-p)/2p $ implies that
$ p(\alpha-\beta)/(2-p) >-3/2$. Write
$ p(\alpha-\beta)/(2-p) =\gamma-3$, we know that $\gamma>3/2$.
If $\beta\ne 2$ sufficiently close to $3/p-3/2+\alpha$, we actually have $\gamma<2$.
Observe that \begin{align*}
(f^\ez)^2(|Du^\ez|^2+\kappa)^{ \gamma-3} &=(|Du^\ez|^2+\kappa)^{\gamma-3} (\Delta_\fz u^\ez+\ez\Delta u^\ez)^2  \\
&\le 2
(|Du^\ez|^2+\kappa)^{ \gamma-2}  | D(|Du^\ez|^2+\kappa)|^2   +2\ez^2(|Du^\ez|^2+\kappa)^{\alpha-3} (\Delta u^\ez )^2 \\
&\le C
 | D (|Du^\ez|^2+\kappa)^{ \gamma/2 } |^2 +C\ez^2  \kappa ^{\alpha-3} (\Delta u^\ez )^2.
\end{align*}

We have
\begin{align*}
&\liminf _{\ez\to0}\int_{B}   |D   (|Du^\ez |^2+\kappa)^{   \alpha}  |^p   \, dx \nonumber\\
&\le C(\alpha,\beta,p) \liminf _{\ez\to0}\int_{B}   |D(|Du^\ez|^2+\kappa)^{\beta/2}|^2\,dx \\
&\quad +  C(\alpha,\beta,p)
\limsup _{\ez\to0} \|1/f\|^{-2}_{C^0(\overline B)} \int_{B}  | D (|Du^\ez|^2+\kappa)^{ \gamma/2 } |^2\,dx\\
&\quad+  C(\alpha,\beta,p) \ez  \kappa ^{\alpha-3}\|1/f\|^{-2}_{C^0(\overline B)} \limsup _{\ez\to0}\ez\int_{B}   (\Delta u^\ez )^2\,dx.
\end{align*}

Note that $\ez  \int_B (\Delta u^\ez )^2\,dx\le \wz M_1(B)$ as given in the proof of Lemma \ref{uniform DDuez} (i),
and that $f^\ez$ is  bounded away from $0$ on $B$ uniformly in $\ez\in(0,\ez_U)$.
We have
\begin{align*}
&\liminf _{\ez\to0}\int_{B}   |D   (|Du^\ez |^2+\kappa)^{   \alpha}  |^p   \, dx \nonumber\\
&\le C(\alpha,\beta,p) \liminf _{\ez\to0}\int_{B}   |D(|Du^\ez|^2+\kappa)^{\beta/2}|^2\,dx \\
&\quad +  C(\alpha,\beta,p,f,B)
\limsup _{\ez\to0}   \int_{B}  | D (|Du^\ez|^2+\kappa)^{ \gamma/2 } |^2\,dx,
\end{align*}
by  Lemma \ref{uniform DDuez} (iii) for $\bz,\gz\in(3/2,2)$,     which   is uniform bounded in $\kz\in(0,1)$ as desired.

This completes the proof of Lemma \ref{strong converge}.
\end{proof}

\begin{rem}
Note that   the proof of Lemma \ref{uniform DDuez} (ii) uses $\kappa>0$, and hence, for any $\alpha\in(0,2)\cup(2,3)$, does not give
  $D|Du^\ez|^\alpha \in L^{ 2}_\loc(U)$ uniformly in $\ez>0$.

For $\alpha\in(2,3)$, if   $ |Du^\ez|  \in L^\fz_\loc(U)$ is unavailable (for example, under $f\in W^{1,\fz}_\loc(\Omega))$,
by $$|D |Du^\ez|^\alpha|= \frac{\alpha }2 |Du^\ez|  ^{ \alpha-2 } |D |Du^\ez|^2  |,
$$
one would conclude $|D |Du^\ez|^\alpha|\in L^2_\loc(U)$ uniformly in $\ez>0$ from
$|D |Du^\ez|^2  |  \in L^2_\loc(U)$ uniformly in $\ez>0$.
But in general,   $ |Du^\ez| \in L^\fz_\loc$ uniformly in $\ez>0$ is unavailable as Remark \ref{sobolevbv} for details.

\end{rem}

Taking $\phi=|Du^\ez|^4[(u^\ez-P) \xi]^2$ in \eqref{functional}
we obtain the following flatness. The details are postponed to Section 6.
\begin{lem}\label{flat}
For any  linear function $P$, we have
\begin{align*}
& \int_{U}\langle Du^{\ez}, D u^\ez -DP \rangle ^2|Du^\ez|^6  \,dx\\
& \le C\left[ \int_{U}   |D^2u^\ez Du^\ez|^2   \xi^2 \,dx \right]^{1/2}
\left[ \int_{U}    |Du^{\ez}|  ^{12}       |Du-DP|^2    (u^\ez-P)^2\xi^2 \,dx\right]^{1/2}\\
&\quad+C  \int_{U}    [|Du^{\ez}|  ^8(|D\xi|^2 +|D^2\xi||\xi|)+|Du^{\ez}|  ^2 (f^\ez)^2\xi^2 ](u^\ez-P)^2  \,dx\\
&\quad +C  \left|\int_{U}
f^\ez_iu^\ez_i|Du^{\ez}|  ^4      (u^\ez-P)^2\xi^2  \,dx\right|.
 \end{align*}

\end{lem}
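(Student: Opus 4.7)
The plan is to derive the target inequality by integrating by parts the integral $\int_U A^2 |Du^\ez|^6 \xi^2\,dx$, where $A := \langle Du^\ez, Du^\ez - DP\rangle = u^\ez_i\,\partial_i(u^\ez - P)$, and then to exploit the approximating PDE $-\Delta_\fz u^\ez - \ez\Delta u^\ez = f^\ez$ and identity \eqref{identityIII}. I would start by writing
$$\int_U A^2 |Du^\ez|^6 \xi^2 \,dx = -\int_U (u^\ez - P)\,\partial_i\bigl[A\, u^\ez_i |Du^\ez|^6 \xi^2\bigr]\,dx$$
and expanding the divergence via the product rule, using the identities
$$u^\ez_i\partial_i A = \tfrac{1}{2}\langle D|Du^\ez|^2, Du^\ez - DP\rangle + \Delta_\fz u^\ez, \quad u^\ez_i\partial_i |Du^\ez|^6 = 6|Du^\ez|^4\Delta_\fz u^\ez,\quad \partial_i u^\ez_i = \Delta u^\ez.$$
Replacing each occurrence of $\Delta_\fz u^\ez$ by $-\ez\Delta u^\ez - f^\ez$ via the PDE then converts the identity into a sum of five integrals over $U$.

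The principal contribution $-\tfrac12\int_U (u^\ez - P)\langle D|Du^\ez|^2, Du^\ez - DP\rangle |Du^\ez|^6 \xi^2\,dx$ equals (up to sign) $\int_U (u^\ez-P)\,(D^2u^\ez Du^\ez)\cdot(Du^\ez - DP)|Du^\ez|^6\xi^2\,dx$, which a direct Cauchy-Schwarz bounds by
$$\Bigl[\int_U |D^2 u^\ez Du^\ez|^2 \xi^2\,dx\Bigr]^{1/2} \Bigl[\int_U |Du^\ez|^{12}|Du^\ez - DP|^2 (u^\ez - P)^2 \xi^2\,dx\Bigr]^{1/2},$$
matching the first summand of the stated right-hand side. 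Each of the four remaining integrals carries an explicit factor of $A$ paired with one of $\Delta u^\ez$, $f^\ez$, $\ez\Delta u^\ez$, or $\langle Du^\ez, D\xi\rangle$, and I would estimate each via Young's inequality $|ab|\le \lambda a^2 + (4\lambda)^{-1}b^2$ with $\lambda$ small, absorbing the $\lambda A^2|Du^\ez|^6\xi^2$ portion back into the left-hand side. The complementary $(4\lambda)^{-1}b^2$ pieces yield the cut-off error $|Du^\ez|^8(|D\xi|^2 + |D^2\xi||\xi|)(u^\ez - P)^2$ and the $f^\ez$-error $|Du^\ez|^2 (f^\ez)^2 \xi^2 (u^\ez - P)^2$ (a further integration by parts on the $\Delta u^\ez$-pieces converts them into the $D\xi$ and $D^2\xi$ contributions).

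The two $f^\ez$-integrals without a derivative, namely $\int_U(u^\ez - P)f^\ez |Du^\ez|^6\xi^2\,dx$ and $\int_U(u^\ez-P)A f^\ez|Du^\ez|^4 \xi^2\,dx$, require one more integration by parts to shift the derivative onto $f^\ez$; this produces exactly the last claimed term $\int_U f^\ez_i u^\ez_i |Du^\ez|^4(u^\ez - P)^2\xi^2\,dx$, while the factors generated by the product rule fall into the already-handled error categories. The main obstacle will be the careful tracking of the $\ez$-dependent contributions that arise from the PDE substitution: several $\ez\Delta u^\ez$-times-$A$ integrals remain and must be controlled uniformly in $\ez$. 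To this end I would invoke \eqref{identityIII} with $\psi = |Du^\ez|^2[(u^\ez-P)\xi]^2$ (so that $\phi = \psi|Du^\ez|^2$), which supplies the positive quantity $\ez\int_U(\Delta u^\ez)^2|Du^\ez|^2(u^\ez-P)^2\xi^2\,dx$ needed to close the remaining Cauchy-Schwarz bounds on these $\ez$-pieces without spoiling the other error contributions.
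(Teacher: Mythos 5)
Your plan starts from the direct integration by parts
$$\int_U A^2 |Du^\ez|^6\xi^2\,dx = -\int_U (u^\ez-P)\,\partial_i\bigl[A\,u^\ez_i\,|Du^\ez|^6\xi^2\bigr]\,dx,\qquad A:=\langle Du^\ez, Du^\ez-DP\rangle,$$
and this is where the difficulty lies. Expanding the divergence produces (among manageable terms) the piece coming from $\partial_i u^\ez_i = \Delta u^\ez$, namely
$$-\int_U (u^\ez-P)\,A\,\Delta u^\ez\,|Du^\ez|^6\,\xi^2\,dx,$$
with a \emph{naked} Laplacian $\Delta u^\ez$ and no $\ez$ prefactor. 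Young's inequality against $A^2|Du^\ez|^6\xi^2$ leaves a complementary piece proportional to $\int_U (\Delta u^\ez)^2|Du^\ez|^6(u^\ez-P)^2\xi^2\,dx$, which is not in any of the allowed error categories and is \emph{not} controlled uniformly in $\ez$ (recall the only a priori control on $\Delta u^\ez$ coming from the approximating PDE is of size $1/\ez$). You suggest instead a further integration by parts on the $\Delta u^\ez$-piece, but if you carry it out, writing $\Delta u^\ez = \partial_i u^\ez_i$ and moving the derivative, you simply regenerate $\int_U A^2|Du^\ez|^6\xi^2\,dx$ together with the negatives of the other terms in your original expansion: the identity collapses to a tautology. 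Finally, your appeal to \eqref{identityIII} at the very end does not repair this, because the positive quantity it supplies is $\ez\int_U(\Delta u^\ez)^2\psi\,dx$, carrying an $\ez$, whereas the problematic term has no $\ez$; moreover the $\psi$ you propose (with $\phi=|Du^\ez|^4[(u^\ez-P)\xi]^2$) has the wrong power of $|Du^\ez|$ relative to what would be needed.

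The way the paper circumvents this is not to compute $\int_U A^2|Du^\ez|^6\xi^2\,dx$ directly, but to compute the functional $\mathbb I_\ez(\phi)=\int_U(-\det D^2u^\ez)\,\phi\,dx$ for $\phi=|Du^\ez|^6(u^\ez-P)^2\xi^2$ in \emph{two} ways simultaneously. Via Lemma \ref{identity1} (the crucial determinant identity $(-\det D^2u^\ez)|Du^\ez|^2=|D^2u^\ez Du^\ez|^2+\ez(\Delta u^\ez)^2+f^\ez\Delta u^\ez$), the term $\Delta u^\ez$ is forced to appear paired with $\Delta_\fz u^\ez$, which the PDE then converts into $-\ez(\Delta u^\ez)^2-f^\ez\Delta u^\ez$ — an $\ez$-weighted nonnegative term that goes to the left-hand side and an $f^\ez\Delta u^\ez$ term that can be integrated by parts onto $f^\ez$. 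Via the divergence form of the determinant (\eqref{functional}) and the integration by parts recorded inside the proof of Lemma \ref{identity2}, the remaining $\Delta u^\ez$-factor appearing from $\Delta u^\ez\,u^\ez_i\tilde\xi_i\tilde\xi$ (with $\tilde\xi=(u^\ez-P)\xi$) gets absorbed together with the $-u^\ez_{ij}u^\ez_j\tilde\xi_i\tilde\xi$ term — using the divergence-free structure of the cofactor matrix — into $D^2u^\ez$, $D\xi$, $D^2\xi$, and $\Delta_\fz u^\ez$ pieces, again leaving no naked $\Delta u^\ez$. That pairing and that use of the determinant structure are the missing ingredients in your plan: the proof of Lemma \ref{flat} in the paper is exactly Lemma \ref{identity2} with $\alpha=4$, $\kappa=0$, and $\xi$ replaced by $(u^\ez-P)\xi$, followed by Cauchy--Schwarz and Young on the resulting $J_1,\dots,J_4$, not a one-way integration by parts of $\int_U A^2|Du^\ez|^6\xi^2\,dx$.
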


 \section{Proofs of Theorem \ref{mainthm} and a flatness when $ f\in (\cup_{q>1}W^{1,q}_\loc(\Omega))\cap C^0(\Omega)$ }
Suppose $\Omega \Subset\mathbb R^2$, and $ f\in W^{1,q}_\loc(\Omega)\cap C^0(\Omega)$ for some $q>1$ with $f>0$ in $\Omega$. Let $u\in C^0(\Omega)$ be a viscosity solution to  $-\Delta_\fz u=f$ in $\Omega$.
Given arbitrary domain $U \Subset\mathbb R^2$,  let $\ez_U$ and $\{f^\ez\}_{\ez\in(0,\ez_U]}$ as in Section 3.
For each  $  \ez \in(0,\ez_U]$,  let $ u^\epsilon\in C^\infty(U)\cap C(\overline U)$ be a   solution to  \eqref{infty ezf}.

The following convergence follows from Lemma \ref{uniform Sobolev}, Lemma \ref{maxandboundary}  and Lemma \ref{unique}.
\begin{lem}\label{uniform conv}
$u^\ez\to u$  in $C^0( U)$ as $\ez\to0$.
\end{lem}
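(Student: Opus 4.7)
The plan is to combine the uniform Sobolev estimates from Lemma \ref{uniform Sobolev} with the uniform boundary regularity from Lemma \ref{maxandboundary} to obtain equicontinuity on $\overline U$, extract a uniformly convergent subsequence, identify the limit as a viscosity solution to $-\Delta_\fz v=f$ in $U$ with $v=u$ on $\partial U$, and then conclude via the uniqueness in Lemma \ref{unique}.

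First I would establish that $\{u^\ez\}_{\ez\in(0,\ez_U]}$ is equicontinuous on $\overline U$. By Lemma \ref{uniform Sobolev} applied with any $p>2$, $u^\ez$ is uniformly bounded in $W^{1,p}(V)$ for every $V\Subset U$, so by Morrey's embedding it is uniformly H\"older continuous on compact subsets of $U$. By Lemma \ref{maxandboundary}, $u^\ez$ is uniformly bounded on $\overline U$ and satisfies $|u^\ez(x)-u(x_0)|\le C|x-x_0|^\gz$ for all $x\in U$ and $x_0\in\partial U$, with $C$ and $\gz$ independent of $\ez$. Splitting the estimate of $|u^\ez(x)-u^\ez(y)|$ into the case where both points lie in a compact subset $V\Subset U$ (use the interior H\"older bound) and the case where at least one point is near $\partial U$ (use the triangle inequality with a nearest boundary point and the boundary estimate), I obtain a uniform modulus of continuity of $u^\ez$ on $\overline U$.

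By the Arzel\`a--Ascoli theorem, any subsequence of $\{u^\ez\}$ contains a further subsequence $\{u^{\ez_k}\}$ converging uniformly on $\overline U$ to some $v\in C^0(\overline U)$. Since $u^\ez=u$ on $\partial U$, the limit satisfies $v=u$ on $\partial U$. I would then invoke the standard stability theorem for viscosity solutions (cf.\ \cite{CIL}): if $\phi\in C^2$ touches $v$ from above at $x_0\in U$, then, since $u^{\ez_k}\to v$ uniformly, there exist $x_k\to x_0$ at which $u^{\ez_k}-\phi$ attains a local maximum; writing the viscosity inequality for $u^{\ez_k}$ at $x_k$ using the test function $\phi$ gives
\[
-\phi_i(x_k)\phi_j(x_k)\phi_{ij}(x_k)-\ez_k\Delta\phi(x_k)\le f^{\ez_k}(x_k),
\]
and letting $k\to\fz$ (using $\ez_k\to 0$ and $f^{\ez_k}\to f$ uniformly on $\overline U$, which follows from standard mollification properties) yields $-\phi_i(x_0)\phi_j(x_0)\phi_{ij}(x_0)\le f(x_0)$. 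The analogous argument for test functions touching from below shows $v$ is a viscosity solution to $-\Delta_\fz v=f$ in $U$.

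By Lemma \ref{unique}, since $|f|>0$ in $\Omega\supset U$ and $v=u$ on $\partial U$, we must have $v=u$ on $\overline U$. Therefore every subsequence of $\{u^\ez\}$ has a further subsequence converging uniformly to $u$, which forces the entire family $u^\ez\to u$ uniformly on $\overline U$, hence in $C^0(U)$. The only mildly delicate step is the equicontinuity up to the boundary, where the interior $W^{1,p}$ constants degenerate as one approaches $\partial U$; this is resolved by exploiting the boundary H\"older bound from Lemma \ref{maxandboundary} in a boundary layer and the interior bound in its complement. All other steps are routine consequences of the lemmas already proven.
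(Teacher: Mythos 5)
Your proof is correct and follows essentially the same route as the paper: uniform interior H\"older from Lemma \ref{uniform Sobolev}, uniform boundary regularity from Lemma \ref{maxandboundary}, compactness and stability of viscosity solutions, and uniqueness from Lemma \ref{unique}. The only cosmetic difference is that you establish equicontinuity on $\overline U$ and then extract a $C^0(\overline U)$-convergent subsequence, whereas the paper first extracts a locally uniformly convergent subsequence and then uses the boundary estimate to pin down the boundary values of the limit; the ingredients and the logic are the same.
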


\begin{proof}
By Lemma \ref{uniform Sobolev}  we know that for any $\gamma\in(0,1)$, $u^\ez\in C^{0,\gamma} (U)$ uniformly in $\ez\in(0,\ez_U]$.
Thus, there exists a function $\hat u\in C^{0,\gamma} (U)$ such that, up to a subsequence,
$
u^\epsilon\rightarrow \hat u \ {\rm{in}}\ C^0(U).
$ as $\ez\to0$.
By Lemma \ref{maxandboundary},
 for sufficient small   $ \epsilon>0$  we have
\begin{equation}\label{bdry_uniform}
\big|u^\epsilon(x)-u (x_0)\big|\le C|x-x_0|^\gamma, \ \forall\ x\in U, \ x_0\in\partial U.
\end{equation}
Note that
   $u^\ez(x)\to \hat u(x)$ for $x\in U$ as $\ez\to0$. Letting $\ez\to0$ in \eqref{bdry_uniform}, we obtain
$$|\hat{u}(x)-u(x_0)|\le C|x-x_0|^\gamma, \ \forall\ x\in U, \ x_0\in\partial U.$$
Thus $\hat{u}\in C(\overline U)$ with $\hat u\equiv u $ on $\partial U$.
By the compactness property of viscosity solutions of elliptic equations (see Crandall-Ishii-Lions \cite{CIL}), we know that $\hat{u}
\in C(\overline U)$ is a viscosity solution to the equation $\Delta_\fz v=-f$ in $U$.
 Since $\hat{u}\equiv u$ on $\partial U$ and $f> 0$ in $\overline U$, it follows from Lemma \ref{unique} that $\hat{u}=u$ in $U$. This also implies that
$u^\epsilon\rightarrow u$ in $C^0 (U)$ as $\epsilon\rightarrow 0$.\end{proof}

\begin{rem}\label{rq}
\begin{enumerate}
\item[(i)] When $0<f\in  W^{1,\fz}_\loc(\Omega)$, it is already proved in \cite{l,lmz} that
$u^\ez \to u$  in $C^0_\loc ( U)$ as $\ez\to0$.
 Note that the assumption  $0<f\in  (\cup_{q>1}W^{1,q}_\loc(\Omega))\cap C^0(\Omega)$   used here  is much weaker than
$0<f\in  W^{1,\fz}_\loc(\Omega)$.

\item[(ii)] Under the assumption  $0<f\in  W^{1,1}_\loc(\Omega)\cap C^0(\Omega)$ or $0<f\in BV_\loc(\Omega)\cap C^0(\Omega)$,
it is still unknown whether  $u^\ez \to u$  in $C^0_\loc ( U)$  as $\ez\to0$ or not.
Note that by our above argument, the convergence $u^\ez \to u$  in $C^0_\loc ( U)$ as $\ez\to0$ would follow from
 $u^\ez\in W^{1,p}_\loc(U)$ uniformly in $\ez>0$ for some $p\in(2,\fz]$,
but which, as indicated by Remark \ref{sobolevbv}, is   available only  when  $0<f\in  (\cup_{q>1}W^{1,q}_\loc(\Omega))\cap C^0(\Omega)$.
  %$f\notin  W^{1,q}_\loc(\Omega)\cap C^0(\Omega)$for any $q>1$.
\end{enumerate}
\end{rem}

Lemma \ref{uniform DDuez}, Lemma \ref{DDuez} and Lemma \ref{flat} allow us to prove the following Sobolev convergence, which is crucial to prove Theorem \ref{mainthm} under $0<f\in W^{1,q}_\loc (\Omega)\cap C^0(\Omega)$ with $q>1$.
\begin{lem} \label{strong converge}

\begin{enumerate}
\item[(i)] If   $  \alpha \ge3$ or $  \alpha =2$,  we have
 $|Du^\epsilon|^{\alpha}\to |Du|^{\alpha}$  in $ L^p_{\loc}(U)$
 for all $p\in[1,\infty)$ and weakly in $W^{1,2}_\loc(U)$ as $\ez\to0$.

 Moreover,  $u^{\ez} \to u$   in $W^{1,\,p}_{\loc}(U)$ for all $p\in[1,\infty)$ as $\ez\to0$.

\item[(ii)] If $ \alpha\in(0,2)\cup(2,3)$,  for each $\kappa\in(0,1]$ we have
 $(|Du^\epsilon|^2+\kappa)^{\alpha/2}\to (|Du |^2+\kappa)^{\alpha/2}$  in $ L^p_{\loc}(U)$ for all $p\in[1,\infty)$ and weakly in $W^{1,2}_\loc(U)$
 as $\ez\to0$.

\item[(iii)]  If $ \alpha\in(3/2,2)\cup(2,3)$, we have $(|Du |^2+\kappa)^{\alpha/2}\to  |Du | ^{  \alpha  }$  in $ L^p_{\loc}(U)$ for all $p\in[1,\infty)$ and weakly in $W^{1,2}_\loc(U)$ as $\kz\to0$ .

\item[(iii)]  If $ \alpha\in(0,3/2]$ and $p\in[1, 3/(3-\alpha) )$,   we have $(|Du |^2+\kappa)^{\alpha/2}\to  |Du | ^{  \alpha  }$  in $ L^t_{\loc}(U)$ for all $t\ge1$ and  weakly in $W^{1,p}_\loc(U)$  as $\kz\to0$.
 \end{enumerate}
\end{lem}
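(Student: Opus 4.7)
The plan is to prove Part~(i) at $\alpha=2$ first, since this is the only case requiring the flatness lemma; everything else then follows by soft arguments. By Lemma~\ref{uniform DDuez}(i) the family $\{|Du^\ez|^2\}_{\ez>0}$ is bounded in $W^{1,2}_\loc(U)$, so Rellich--Kondrachov produces a subsequence converging to some $h\in W^{1,2}_\loc(U)$ in $L^p_\loc$ for every $p\in[1,\fz)$ and weakly in $W^{1,2}_\loc$. Lemma~\ref{uniform Sobolev} together with Lemma~\ref{uniform conv} simultaneously gives $Du^\ez\rightharpoonup Du$ weakly in $L^p_\loc$ (the distributional limit must be $Du$ because $u^\ez\to u$ in $C^0$), whence $|Du|^2\le h$ a.e.\ by weak lower semicontinuity. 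The task is to upgrade this inequality to equality.

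The identification $h=|Du|^2$ is the crucial step and the main technical obstacle. I would apply the integral flatness of Lemma~\ref{flat} at a generic $x_0\in U$ which is a point of differentiability of $u$ (such points are of full measure by Rademacher, since $u$ is locally Lipschitz by Lemma~\ref{lipu}) and a Lebesgue point of both $h$ and $|Du|^2$. With $P(x)=u(x_0)+Du(x_0)\cdot(x-x_0)$ and $\xi$ a smooth cutoff adapted to $B(x_0,r)$, the weight $(u^\ez-P)^2$ on $B(x_0,2r)$ is controlled by $(\|u^\ez-u\|_{L^\fz}+\|u-P\|_{L^\fz(B(x_0,2r))})^2=o(r^2)$ after sending first $\ez\to 0$ and then $r\to 0$, while the other factors $|Du^\ez|^8$, $|Du^\ez|^{12}$, $|Du^\ez|^2(f^\ez)^2$ on the right-hand side of Lemma~\ref{flat} are uniformly equi-integrable by Lemma~\ref{uniform Sobolev}; the inhomogeneous term $\int f^\ez_iu^\ez_i|Du^\ez|^4(u^\ez-P)^2\xi^2\,dx$ is absorbed by H\"older using $f\in W^{1,q}_\loc$. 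Normalising by $|B(x_0,r)|$ and iterating the limits then forces $Du^\ez$ to approach $Du(x_0)$ in an averaged $L^2$ sense on shrinking balls, which through a diagonal extraction yields $h(x_0)=|Du(x_0)|^2$.

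With $h=|Du|^2$ in hand, strong $L^p_\loc$-convergence of $|Du^\ez|^2$ together with weak convergence $Du^\ez\rightharpoonup Du$ and the uniform convexity of $L^2$ yield $Du^\ez\to Du$ strongly in $L^2_\loc$; interpolation with Lemma~\ref{uniform Sobolev} upgrades this to $W^{1,p}_\loc$-convergence for every $p\in[1,\fz)$. Pointwise a.e.\ convergence of $Du^\ez$ combined with dominated convergence and the uniform $L^{2\alpha}_\loc$-bounds then gives $|Du^\ez|^\alpha\to|Du|^\alpha$ in $L^p_\loc$ when $\alpha\ge 3$, and $(|Du^\ez|^2+\kz)^{\alpha/2}\to(|Du|^2+\kz)^{\alpha/2}$ in $L^p_\loc$ when $\alpha\in(0,2)\cup(2,3)$ and $\kz>0$; the uniform $W^{1,2}_\loc$-bounds of Lemma~\ref{uniform DDuez}(i)--(ii) convert these into the stated weak $W^{1,2}_\loc$-convergences by uniqueness of weak limits, completing Parts~(i) and~(ii).

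For Parts~(iii) and~(iv), the approximation $u^\ez$ plays no role: pointwise $(|Du|^2+\kz)^{\alpha/2}\to|Du|^\alpha$ as $\kz\to 0$, and domination by $(|Du|^2+1)^{\alpha/2}\in L^t_\loc$ (using $u\in C^{0,1}$) yields strong $L^t_\loc$-convergence by dominated convergence. To produce the Sobolev weak limit, I note that the weak $W^{1,2}_\loc$-convergence $(|Du^\ez|^2+\kz)^{\alpha/2}\rightharpoonup(|Du|^2+\kz)^{\alpha/2}$ from Part~(ii) gives $\|D(|Du|^2+\kz)^{\alpha/2}\|_{L^2(V)}\le\liminf_{\ez\to 0}\|D(|Du^\ez|^2+\kz)^{\alpha/2}\|_{L^2(V)}$, which by Lemma~\ref{uniform DDuez}(iii)--(iv) is uniformly bounded in $\kz\in(0,1)$ (as an $L^2$ or $L^p$ bound respectively). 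A weakly convergent subsequence in $W^{1,2}_\loc$ (resp.\ $W^{1,p}_\loc$) then exists, and its limit is forced to equal $|Du|^\alpha$ by uniqueness of limits against the strong $L^t_\loc$-limit, so the whole family converges.
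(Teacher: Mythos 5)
Your proposal follows the same overall route as the paper's proof: extract the weak $W^{1,2}_\loc$ limit $h$ of $|Du^\ez|^2$ via Lemma~\ref{uniform DDuez}, identify $h=|Du|^2$ using the integral flatness of Lemma~\ref{flat} at Lebesgue/differentiability points, upgrade to strong $L^p_\loc$ convergence of $Du^\ez$, and then handle (ii)--(iv) by soft compactness and dominated convergence. Your prior inequality $|Du|^2\le h$ from weak lower semicontinuity is a nice simplification (the paper instead treats the case $h(\bar x)=0$ by a direct Poincar\'e argument), and your treatment of the $\kz$-limits in (iii)--(iv) matches the paper.

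The one place you should be more careful is the description of what the flatness lemma actually yields. You write that the bound ``forces $Du^\ez$ to approach $Du(x_0)$ in an averaged $L^2$ sense.'' That is not what comes out: the flatness controls $\bint_{B(x_0,r)}\langle Du^\ez,Du^\ez-Du(x_0)\rangle^2|Du^\ez|^6\,dx$, which is degenerate when $|Du^\ez|$ is small, so it gives no direct control on $|Du^\ez-Du(x_0)|$. Passing $\ez\to0$ (which requires justifying the limit of the nonlinear integrand: the terms $|Du^\ez|^{10}$ and $\langle Du(x_0),Du^\ez\rangle|Du^\ez|^8$ converge by strong/weak pairing, and $\langle Du(x_0),Du^\ez\rangle^2|Du^\ez|^6$ requires weak lower semicontinuity with the frozen weight $h^3$) gives $\bint_{B(x_0,r)}(h-\langle Du(x_0),Du\rangle)^2h^3\,dx\le C\lambda$, and then $r\to0$ at a Lebesgue point yields $(h(x_0)-|Du(x_0)|^2)^2\,h(x_0)^3\le C\lambda$, so $(h(x_0)-|Du(x_0)|^2)^2\,h(x_0)^3=0$. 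This forces $h(x_0)=|Du(x_0)|^2$ only when $h(x_0)>0$; your inequality $|Du|^2\le h$ closes the case $h(x_0)=0$. Also, $x_0$ must be chosen to be a Lebesgue point of the relevant products (e.g.\ $h^5$, $Du\,h^4$, $Du_iDu_jh^3$), not merely of $h$ and $|Du|^2$; the paper handles this by requiring Lebesgue points of $h^N$ for a sufficiently large $N$ as well as of $Du$. Finally, ``diagonal extraction'' is not what happens; the argument is a fixed order $\ez\to0$, then $r\to0$, then the auxiliary parameter $\lambda\to0$.
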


\begin{proof}

{\it Proof of (i)}  By Lemma \ref{uniform DDuez}  for $\alpha=2$  we know that
$D|Du^\epsilon| ^2\in W^{1,2}_\loc(U) $ uniformly in $\ez>0$.
From the weak compactness of $W_\loc^{1,2}(U)$, it follows that
$|Du^\epsilon| ^{2} $ converges as $\ez\to0$ (up to some subsequence) to some function $h  $ in $L^p_\loc(U)$
and weakly  in $W^{1,2}_\loc(U)$.

It suffices to prove  $h =|Du|^2 $   almost everywhere.
Indeed, assume this holds for the moment. We then have
 $|Du^\epsilon| ^{2}\to  |Du|^2$ in $L^p_\loc(U)$ for all $p\ge1$ as $\ez\to0$.
This together with $Du^\ez\to Du$ weakly in $L^p_\loc(U)$ implies that  $Du^\ez\to Du$   in $L^p_\loc(U)$.
For $\alpha\ge3$,
$$ |D u^\ez| ^{  \alpha  }=(|D u^\ez| ^2)^{  \alpha  /2}\to (h  )^{\alpha/2}=(|Du |^2 )^{\alpha/2}=
 |Du | ^{  \alpha  }$$
almost everywhere as $\ez\to0$. By Lemma \ref{uniform DDuez}  for   $\alpha\ge3$ we also know that
$D|Du^\epsilon| ^{\alpha}\in W^{1,2}_\loc(U) $ uniformly in $\ez>0$.
From the weak compactness of $W_\loc^{1,2}(U)$ again and
$ |D u^\ez| ^{  \alpha  }\to
 |Du | ^{  \alpha  }$, it follows that
$|Du^\epsilon| ^{\alpha} \to |Du | ^{  \alpha  }$   in $L^p_\loc(U)$ for any $p\in[1,\fz)$ and weakly in $L^2_\loc(U)$ as $\ez\to0$.

To   prove  $h =|Du|^2 $ almost everywhere, we only need to prove $h (\bar x)=|Du(\bar x)|^2 $ for all
$\bar x\in U$ such that
  $u$ is differentiable at $\bar x$, and   $\bar x$ is the Lebesgue point of $ Du $
and  $[h]^N$   with $N\ge  5q/(q-1)$.
Note that the set of such $\bar x$ has full measure in $U$.

If $h(\bar x)=0$, then by $u^\ez\to u$ in $C^0_\loc(U)$  (see Lemma \ref{uniform Sobolev}) and
$|Du^\epsilon| ^{2}\to h $ in $L^2_\loc(U)$ as $\ez\to0$, we have
\begin{align*}|Du(\bar x)| &\le C\limsup_{R\to0} \frac1R\bint_{B(\bar x, R)} |u- u_{B(\bar x, R)}|\\
& =  C
\limsup_{R\to0} \limsup_{\ez\to0}\frac1R\bint_{B(\bar x, R)} | u^\ez-   u^\ez_{B(\bar x, R)}|\,dx \\
&\le C\limsup_{R\to0} \limsup_{\ez\to0} \left [\bint_{B(\bar x, R)} |D   u^\ez| ^2\,dx\right ]^{1/2}\\
&\le C\limsup_{R\to0}   \left [\bint_{B(\bar x, R)}  h \,dx\right ]^{1/2}\\
&\le C[h(\bar x)]^{1/2}\\
&=0
\end{align*}
as desired.

  Below, assume
 $h(\bar x)>0$.
Note that for $p=1,\cdots, N$ , we have $$\lim_{r\to0}\bint_{B(\bar x,r)}h^p\,dx = [h(x)]^p,$$
which implies that there exists $r_{\bar x}<\dist(\bar x, \partial U)/8$ such that for all $r<r_{\bar x}$, we have
$$ \bint_{B(\bar x,r)}h^p\,dx \le  2[h(x) ]^p.$$
Considering $|Du^\ez|^2\to h$ in $L^{p}_\loc(U)$ as $\ez\to0$, we know that  for each $r\in(0,r_{\bar x})$,
 there exists   $ \ez_{\bar x,r} \in(0,r)\cap(0,\ez_U)$ such that for  any $\ez \in(0,\ez_r)$,
  $$ \bint_{B(\bar x,r)}|Du^\ez|^{2p}\,dx \le 4 [h(x) ]^p.$$

Moreover,  for any $\lambda\in(0,1)$, thanks to the differentiability at $\bar x$ of $u$,
there exists  $r_{\lambda,\bar x}\in (0, r_{\bar x})$ such that for any $ r\in(0,r_{\lambda,\bar x})$, we have
$$ \sup_{B(\bar x, 2r)}\frac{|u(x) - u(\bar x)-\langle Du(\bar x),(x-\bar x)\rangle | }{r }\le \lambda.$$
By Lemma \ref{uniform Sobolev}, for arbitrary     $ r\in(0,r_{\lambda,\bar x})$, there exists $\ez_{\lambda,\bar x, r}\in(0, \ez_{\bar x,r}]$ such that for all $\ez\in(0,\ez_{\lambda,\bar x,r })$, we have
$$ \sup_{B(\bar x, 2r)}\frac{|  u^\ez(x) -    u^\ez(\bar x)-\langle Du(\bar x),(x-\bar x)\rangle| }{r }\le 2\lambda.$$

Let $P(x)= u^\ez(\bar x)-\langle Du(\bar x),(x-\bar x)\rangle$ in Lemma \ref{flat}. Then $DP=Du(\bar x)$ in $U$.
For all balls $B= B(\bar x, r)$ with $r\in(0,r_{\lambda,\bar x})$, choose a suitable  cut-off function $\xi$  associated to $B$.
For  $ \ez\in(0,\epsilon_{\lambda,\bar x,r})$,
from $|u^\ez-P|\le 2r\lz$ on $2B$ and  Lemma \ref{flat} it follows that
\begin{align*}
& \int_{B}\langle Du^{\ez}, D u^\ez -Du(\bar x) \rangle ^2|Du^\ez|^6  \,dx\\
& \quad\le r^2\lz \left[ \int_{2B}   |D^2u^\ez Du^\ez|^2     \,dx \right]^{1/2}
\left [ \bint_{2B}    |Du^{\ez}|  ^{12}       |Du^\ez-Du(\bar x)|^2      \,dx\right]^{1/2}\\
&\quad\quad+C  r^2\lz^2 \int_{2B}    [r^{-2}|Du^{\ez}|  ^8  +|Du^{\ez}|  ^2 (f^\ez)^2
+|Du^{\ez}|  ^5  | D f^\ez|      ]    \,dx
 \end{align*}
Note that  by Lemma \ref{DDuez}, we have
\begin{align*}
&   \int_B  |D | Du^\epsilon  |^2 |^2   \,dx\bint_{2B}    |Du^{\ez}|  ^{12}       |Du^\ez-Du(\bar x)|^2      \,dx  \\
&\quad\le
  C \left[\bint_{2B} |Du^\ez|^4\,dx +\int_{2B}|Df^\ez| \,dx+1 \right] \left[\bint_{2B}    |Du^{\ez}|  ^{14}         \,dx + |Du(\bar x)|^2\bint_{2B}    |Du^{\ez}|  ^{12}         \,dx \right]\\
  &\quad\le  C\{[h(\bar x)]^2 +1+ (r_{ \bar x})^{1-1/q} \|Df\|_{L^q(2B)}\} [h(\bar x)]^6[h(\bar x)+ |Du(\bar x)|^2],
\end{align*}
where we use$$ \int_{2B}|Df^\ez| \,dx\le r^{1-1/q} \|Df^\ez\|_{L^q(2B)} \le 2 (r_{ \bar x})^{1-1/q} \|Df\|_{L^q(4B)}.$$
  Moreover,
\begin{align*}&\int_{2B}    [r^{-2}|Du^{\ez}|  ^8  +|Du^{\ez}|  ^2 (f^\ez)^2
+|Du^{\ez}|  ^5  | D f^\ez|      ]    \,dx\\
&\quad\le C\bint_{2B}|Du^{\ez}|  ^8 \,dx + r^2\|f^\ez\|^2_{C^0(\overline U)} \bint_{2B}|Du^{\ez}|  ^2 \,dx
\\
&\quad\quad+r^{2-2/q}\|Df^\ez\|_{L^q(U)}\left[\bint_{2B} |Du^\ez|^{5q/(q-1)}\,dx\right]^{(q-1)/q}\\
&\quad\le  C
[h(\bar x) ]^4+C r^2\|f \|^2_{C^0(\overline U)} h(\bar x)   +
C( r_{\bar x})^{2-2/q}\|Df \|_{L^q(U)} [h(\bar x) ]^{N/5}
\end{align*}
 We therefore conclude that
\begin{align}\label{flatuez}
 \bint_{B(\bar x,r)} (|Du ^{\ez}|^2- \langle Du(\bar x),D u^\ez\rangle  )^2|Du ^{\ez}|^6 \,dx\le
 C (\bar x,  \|f\|_{C(\bar U)},\|Df\|_{L^q(U)}) \lambda .
\end{align}

Since
$ |D u^\ez|^2 \to h$ in $L^2_\loc(U)$ and $D u^\ez \rightharpoonup Du$ weakly in $L^2_\loc(U)$ as $\ez\to0$,
applying \eqref{flatuez} with $r\in(0,r_{\lambda,\bar x})$ we have
\begin{align*}
\bint_{B(\bar x,r)} (h- \langle Du(\bar x),Du \rangle  )^2 |h|^3\,dx
 &\le   \liminf_{\ez\to0}\bint_{B(\bar x,r)} (|Du ^{\ez}|^2- \langle Du(\bar x),D  u^\ez\rangle  )^2|Du ^{\ez}|^6 \,dx\\
 &
  \le
C   (\bar x,\|f\|_{C^0(\overline U)},\|Df\|_{L^q(U)})\lambda   \quad
\end{align*}
Since $\bar x$ is a Lebesgue point of $[h]^4$ with $p=1,\cdots,N$ and $ Du $, via  H\"older's inequality, we obtain
\begin{align*}|h(\bar x)-|Du( \bar x)|^2|^2|h(\bar x)|^3&=\lim_{r\to 0} \bint_{B(\bar x,r)} |h- \langle Du(\bar x),Du \rangle  ||h|^3\,dx\\
&\le   C (\bar x,  \|f\|_{C(\bar U)},\|Df\|_{L^q(U)})\lambda  . \end{align*}
By $h(\bar x)>0$, letting $\lambda\to0$ we have $h(\bar x)=|Du(\bar x) |^2$ as desired.

{\it Proof of (ii)} For $ \alpha\in(0,2)\cup(2,3)$ and $\kappa\in(0,1)$, by Lemma \ref{uniform DDuez} (ii) we know that $ (|Du^\epsilon| ^2+\kappa)^{\alpha/2} \in W^{1,2}_\loc(U)$ uniformly in $\ez>0$.
By (i) $ (|Du^\epsilon| ^2+\kappa)^{\alpha/2} \to  (|Du | ^2+\kappa)^{\alpha/2} $ almost everywhere.
From this and the weak compactness of $W_\loc^{1,2}(U)$, it follows that
$(|Du^\epsilon| ^2+\kappa)^{\alpha/2}\to  (|Du | ^2+\kappa)^{\alpha/2} $
in $L^p_\loc(U)$ for all $p\ge1$
and weakly  in $W^{1,2}_\loc(U)$ as $\kappa\to0$.

{\it Proof of (iii)} For $\alpha\in(3/2,2)\cup(2,3)$,  by (ii) and Lemma \ref{uniform DDuez} (iii) we have $(|Du | ^2+\kappa)^{\alpha/2} $       in  $W^{1,2}_\loc(U)$ uniformly in $\kappa\in(0,1)$.
Observing that $(|Du | ^2+\kappa)^{\alpha/2}  \to |Du|^{\alpha}$ almost everywhere as $\kz\to0$,  by Lemma \ref{lipu} and Lebesgue's theorem, we have $(|Du | ^2+\kappa)^{\alpha/2}  \to |Du|^{\alpha}$  in $L^t_\loc(U)$ for all $t\ge1$ as $\kz\to0$.
By the compactness of $W^{1,2}_\loc(U)$ again,    we  have
$|Du|^{\alpha}\in W^{1,2}_\loc(U)$, $(|Du | ^2+\kappa)^{\alpha/2}  \to |Du|^{\alpha}$   weakly in $W^{1,2}_\loc(U)$ as $\kappa\to0$.

{\it Proof of (iv)} For $\alpha\in(0,3/2]$ and $p\in[1,3/(3-\alpha))$,
by (ii) and Lemma \ref{uniform DDuez} (iv)  we have $(|Du | ^2+\kappa)^{\alpha/2} \in W^{1,p}_\loc(U)$ uniformly in $\kappa\in(0,1)$.
  Similarly to (iii), observing that $(|Du | ^2+\kappa)^{\alpha/2}  \to |Du|^{\alpha}$ almost everywhere as $\kz\to0$,  by Lemma \ref{lipu} and Lebesgue's theorem, we have
 $(|Du | ^2+\kappa)^{\alpha/2}  \to |Du|^{\alpha}$  in $L^t_\loc(U)$ for all $t\ge1$ as $\kz\to0$.
 By the compactness of $W^{1,p}_\loc(U)$ again,    we  have
 $|Du|^{\alpha}\in W^{1,p}_\loc(U)$, $(|Du | ^2+\kappa)^{\alpha/2}  \to |Du|^{\alpha}$   weakly in $W^{1,p}_\loc(U)$ as $\kappa\to0$.

This completes the proof of Lemma \ref{strong converge}.
\end{proof}

\begin{proof}[Proofs of Theorem \ref{mainthm} when $ 0<f\in (\cup_{q>1}W^{1,q}_\loc(\Omega))\cap C^0(\Omega)$.]
By Lemma \ref{strong converge}, we have  that
$|Du|^{\alpha}\in W^{1,2}_\loc(U)$ when $\alpha>3/2$, and $|Du|^{\alpha}\in W^{1,p}_\loc(U)$ when $\alpha\in(0,3/2]$ and $p\in[1,3/(3-\alpha))$.  This gives (ii), and also reduces
(i) to verifying \eqref{strongest}, where  we note that \eqref{ex01} follows from \eqref{strongest} and Lemma \ref{lipu}.
Moreover, note that (iii) follows from (i) and (iv) as indicated by Remark \ref{relthm} (i).
 So below we only need to prove   \eqref{strongest},  and (iv), that is, \eqref{chain} and \eqref{pw}.

{\it Proof of \eqref{strongest}.}
Let $U\Subset \Omega$ such that $V:={\rm supp}\, \xi\Subset U$.
 For $\ez\in(0,1)$, let $u^\ez$ be a smooth solution to \eqref{infty ezf}.
Note that $u^\ez$ to $u$ in $W^{1,p}_\loc (U)$ for all $p\ge1$ as given in Lemma \ref{strong converge}, and $f^\ez \to f $ in $W^{1,q}(U)$,
For $  \alpha =2$ or $  \alpha \ge3$
we have  $$\int_{U}          f^\ez  _i  u^\ez_i |Du^\ez | ^{2\alpha-4} \xi^2\, dx\to \int_{U}  f _i  u_i |Du | ^{2\alpha-4} \xi^2\, dx,$$
For $\alpha\in(3/2,2)\cup(2,3)$ and $\kappa\in(0,1)$,
we have  $$\int_{U}          f^\ez  _i  u^\ez_i (|Du^\ez | ^2+\kappa)^{ { \alpha-2}} \xi^2\, dx\to \int_{U}  f _i  u_i (|Du | ^2+\kappa)^{ { \alpha-2}} \xi^2\, dx,$$

Letting $\ez\to 0$ in Lemma \ref{DDuez} (i) we have
\begin{align}\label{ex3}
  \int_{U}  |D |   Du|^2 |^2 \xi^2 \,dx&\le \liminf_{\ez\to 0} \left[ \int_{U}  |D |   Du^\ez|^2|^2 \xi^2 \,dx+
   \ez \int_{U}   |   \Delta u^\ez|^2 \xi^2 \,dx\right]\nonumber\\
  &\le C   \liminf_{\ez\to 0}  \int_{U}|Du^\ez | ^{6}(|D\xi|^2+|D^2\xi||\xi|) \,dx
 +  C\liminf_{\ez\to 0} \left| \int_{U}          f^\ez _i  u^\ez_i  \xi^2\, dx\right|\nonumber\\
 &\le C       \int_{U}|Du  | ^6(|D\xi|^2+|D^2\xi||\xi|) \,dx
 +  C  \left|\int_{U}          f  _i  u _i   \xi^2\, dx\right|
\end{align}
as desired.

For $  \alpha \ge3$ letting $\ez\to 0$ in Lemma \ref{DDuez} (ii)  we have
 \begin{align*}
  &\int_{U}  |D |   Du|^{\alpha} |^2 \xi^2 \,dx\\
  &\quad\le \liminf_{\ez\to 0}  \int_{U}  |D |   Du^\ez|^{\alpha} |^2 \xi^2 \,dx\\
  &\quad\le C   \liminf_{\ez\to 0}  \int_{U}|Du^\ez | ^{ 2\alpha }(|D\xi|^2+|D^2\xi||\xi|) \,dx
 +  C\liminf_{\ez\to 0} \left| \int_{U}          f^\ez _i  u^\ez_i |Du | ^{2\alpha-4} \xi^2\, dx\right|\\
 &\quad\quad + C(\alpha)\liminf_{\ez\to 0} \ez\left [\int_{U}     (\Delta  u^\epsilon)^2       \xi^2  \, dx\right]^{1/2}   \left[\int_{U}     (f^\ez )^2 |Du^\epsilon |^{4  \alpha -12}      \xi^2  \, dx\right]^{1/2} \\
 &\quad\le C       \int_{U}|Du  | ^{ 2\alpha }(|D\xi|^2+|D^2\xi||\xi|) \,dx
 +  C  \left|\int_{U}          f  _i  u _i |Du | ^{2\alpha-4} \xi^2\, dx\right|,
\end{align*}
where by \eqref{ex3} and  Lemma \ref{uniform Sobolev} we have
 \begin{align*}&\liminf_{\ez\to 0} \ez\left [\int_{U}     (\Delta  u^\epsilon)^2       \xi^2  \, dx\right]^{1/2}   \left[\int_{U}     (f^\ez )^2 |Du^\epsilon |^{4{ \alpha-1}-8}      \xi^2  \, dx\right]^{1/2} \\
&\quad\le \liminf_{\ez\to 0} \ez^{1/2}\cdot \ez \int_{U}     (\Delta  u^\epsilon)^2       \xi^2  \, dx
   +\liminf_{\ez\to 0} \ez^{1/2} \int_{U}     (f^\ez )^2 |Du^\epsilon |^{4{ \alpha-1}-8}      \xi^2  \, dx \\
   &\quad=0.
      \end{align*}

Similarly,  for $  \alpha \in(3/2,2)\cup(2, 3)$ and $\kappa\in(0,1)$, letting $\ez\to 0$ in Lemma \ref{DDuez} (iii) and (iv),
 we have
 \begin{align*}
  &\int_{U}  |D (|   Du|^2+\kappa)^{\alpha/2} |^2 \xi^2 \,dx\\
  &\quad\le
  \liminf_{\ez\to 0}  \int_{U}  |D (|   Du^\ez|^2+\kappa)^{\alpha/2} |^2 \xi^2 \,dx\\
  &\quad\le C ({ \alpha})     \int_{U}(|Du  | ^2+\kz)^{ \alpha}(|D\xi|^2+|D^2\xi||\xi|) \,dx
 +  C(\alpha) \left| \int_{U}          f _i  u_i( |Du | ^2+\kz)^{ { \alpha-2}} \xi^2\, dx\right|\\
   &\quad\quad +C(\alpha)\kappa^{{ \alpha-3/2}} \int_{U}(  |D  f |    \xi^2 +f |D\xi||\xi|)\, dx.
\end{align*}
Sending $\kappa\to0$, by Lemma \ref{uniform Sobolev} we obtain
 \begin{align}%\label{ex8}
  \int_{U}  |D  |   Du|  ^{  \alpha  } |^2 \xi^2 \,dx
  &\le
  \liminf_{\kappa\to 0}  \int_{U}  |D (|   Du |^2+\kappa)^{\alpha/2} |^2 \xi^2 \,dx\nonumber\\
  &\le C ({ \alpha })     \int_{U}|Du  | ^{  2\alpha }(|D\xi|^2+|D^2\xi||\xi|) \,dx
 +  C(\alpha) \left| \int_{U}          f _i  u_i  |Du |  ^{ 2\alpha-4} \xi^2\, dx\right|\nonumber
\end{align}
as desired.

{\it Proof of \eqref{chain}.}
We only consider the case
 $\alpha,\tau\in(0,2)\cup (2,3)$; the other cases are similar and easier.
    By Lemma \ref{strong converge} we have, for any $\Phi\in C^\fz_c(U,\mathbb R^2)$, \begin{align*}
\int_U |Du|^\tau \langle D|Du|^\alpha, \Phi\rangle \,dx&=\lim_{\kappa\to0}
\int_U (|Du|^2+\kappa)^{\tau/2} \langle D(|Du|^2+\kappa)^{\alpha/2}, \Phi\rangle\,dx\\
&=\lim_{\kappa\to0}\lim_{\ez\to0}
\int_U (|Du^\ez|^2+\kappa)^{\tau/2}  \langle D(|Du^\ez|^2+\kappa)^{\alpha/2}, \Phi\rangle\,dx\\
&=\frac{\alpha}{\alpha+\tau}\lim_{\kappa\to0}\lim_{\ez\to0}
\int_U \langle D(|Du^\ez|^2+\kappa)^{(\alpha+\tau)/2}, \Phi\rangle\,dx\\
&=\frac{\alpha}{\alpha+\tau} \lim_{\kz\to0}
\int_U  \langle D(|Du |^2+\kappa)^{(\alpha+\tau)/2} , \Phi\rangle\,dx\\
&=\frac{\alpha}{\alpha+\tau}
\int_U  D|Du | ^{ \alpha+\tau }, \Phi\rangle\,dx,
\end{align*}
which implies \eqref{chain} as desired.

{\it Proof of \eqref{pw}.}
By \eqref{chain}, to obtain \eqref{pw} it suffices to prove $(|Du|^2)_iu_i=2f$ almost everywhere. Indeed, assume this holds for the moment.
When $\alpha> 2$, we have $$(|Du|^\alpha)_iu_i= \frac{\alpha}{2} |Du|^{ \alpha-2} (|Du|^2)_iu_i =\alpha |Du|^{\alpha-2}f $$
almost everywhere.
When $\alpha\in(0,2)$, noting that
 $f>0$ implies that $Du$ and $D|Du|^2 $ vanishes only on a set with measure $0$; otherwise
  $(|Du|^2)_iu_i=0\ne 2f$  on a set with positive measure. Therefore
  $D(|Du|^\alpha)  = \frac{\alpha}{2} |Du|^{ \alpha-2}  D|Du|^2$  and hence,  we obtain
  $(|Du|^\alpha)_iu_i =\alpha |Du|^{\alpha-2}f $ similarly.

Finally, we prove $(|Du|^2)_iu_i=2f$ almost everywhere.
By  $Du^\epsilon\to Du $ in $W^{1,2}_\loc(U)$ and     $D|Du^\epsilon|^2 \to D|Du |^2$ weakly in $L^2_\loc(U)$ as given in Lemma~\ref{strong converge},
 we have
\begin{align*}\int_U \langle D|Du|^2,Du\rangle \phi\,dx&=
\lim_{\epsilon\to0} \int_U \langle D|Du^\epsilon|^2,Du^\epsilon\rangle \phi\,dx=
\lim_{\epsilon\to0}   \int_U  2    \Delta_\infty u^\ez \phi\,dx \quad\forall\phi\in C_c^\infty(U).
\end{align*}
Applying $\Delta_\infty u^\ez =-\ez\Delta u^\ez-f^\ez$, we have
\begin{align*}\int_U \langle D|Du|^2,Du\rangle \phi\,dx
&= \lim_{\epsilon\to0} \left[-
2\ez \int_U\Delta  u^\epsilon  \phi\,dx-  \int_U  2   f^\ez \phi\,dx\right]  \quad\forall\phi\in C_c^\infty(U).
\end{align*}
By \eqref{ex3},
\begin{align*}
 \lim_{\epsilon\to0}
 \ez \int_U  \Delta  u^\epsilon  \phi\,dx
  \le \liminf_{\epsilon\to0} \ez \left[\int_U (\Delta  u^\epsilon )^2 |\phi|^2\,dx\right]^{1/2}\,|\rm supp \, \phi|^{1/2}
 =0.
 \end{align*}
   Therefore
$$\int_U \langle D|Du|^2,Du\rangle \phi\,dx= -  \int_U   2   f  \phi\,dx\quad\forall\phi\in C_c (U) $$
as desired.
\end{proof}

Finally, with the aid of Lemma \ref{flat}, Lemma \ref{strong converge} and Lemma \ref{DDuez},
we  prove the following integral flatness for $u$, which we use to prove Theorem \ref{mainthm} when $0<f\in  BV_\loc(\Omega)\cap C^0(\Omega)$.
\begin{lem}\label{flatu}
For any  linear function $P$, we have
\begin{align*}
& \int_{\Omega}\langle Du  , D u  -DP \rangle ^2|Du |^6  \xi^2\,dx\\
& \le  C\left [\int_{\Omega}|Du | ^4 (|D\xi|^2 +|D^2\xi||\xi|) \,dx + \left|\int_{\Omega}   u_if_i \xi^2  \, dx\right| \right ]^{1/2}\\
&\quad\quad\times
\left[ \int_{\Omega}    |Du |  ^{12}       |Du-DP|^2    (u -P)^2\xi^2 \,dx\right]^{1/2}\\
&\quad+C  \int_{\Omega}    [|Du |  ^8(|D\xi|^2 +|D^2\xi||\xi|) +|Du |  ^2 (f )^2\xi^2 ] (u-P)^2  \,dx
\\
&\quad+C \left| \int_{\Omega}    f_iu_i|Du |  ^4\xi^2   (u-P)^2  \,dx\right|.
 \end{align*}
\end{lem}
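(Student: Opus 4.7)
\textbf{Proof plan for Lemma \ref{flatu}.}
The strategy is to apply Lemma \ref{flat} to the smooth approximations $u^\ez$ (with the same cutoff $\xi$ and linear function $P$) and pass to the limit $\ez \to 0$. The factor $\bigl[\int_U |D^2 u^\ez Du^\ez|^2 \xi^2\,dx\bigr]^{1/2}$ which appears on the right-hand side of Lemma \ref{flat} but not of Lemma \ref{flatu} must first be absorbed, and for that I would use Lemma \ref{DDuez}(i) with $\alpha = 2$:
$$\int_U |D^2 u^\ez Du^\ez|^2 \xi^2\,dx \le C\int_U |Du^\ez|^4(|D\xi|^2+|D^2\xi||\xi|)\,dx + C\Bigl|\int_U u^\ez_i f^\ez_i \xi^2\,dx\Bigr| + C\ez^2\int_U |D\xi|^2\,dx.$$
Substituting this into Lemma \ref{flat}, the $\ez^2$ term vanishes in the limit, and the remaining bound has exactly the shape claimed in Lemma \ref{flatu}, with $u^\ez, f^\ez$ in place of $u, f$.

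The second step is to pass to the limit term by term. Lemma \ref{strong converge}(i) yields $Du^\ez \to Du$ strongly in $L^p_\loc(U)$ for every $p<\infty$ and, along a subsequence, also pointwise almost everywhere; combined with the uniform $L^p_\loc$-bounds of Lemma \ref{uniform Sobolev}, Vitali's theorem gives $|Du^\ez|^k \to |Du|^k$ strongly in $L^p_\loc$ for every $k, p \in [1,\infty)$. Lemma \ref{uniform conv} gives $u^\ez \to u$ uniformly on $\mathrm{supp}(\xi)$, so $(u^\ez - P)^2 \to (u - P)^2$ uniformly there. Standard mollification estimates provide $f^\ez \to f$ in $C^0(\mathrm{supp}\,\xi)$ and $Df^\ez \to Df$ in $L^q_\loc$ for the exponent $q>1$ from the hypothesis. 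Each of the right-hand integrals in the inequality resulting from the first step then converges to its $u,f$-analogue; meanwhile, applying Fatou's lemma to the a.e.-convergent integrand on the left produces
$$\int_\Omega \langle Du, Du - DP\rangle^2 |Du|^6 \xi^2\,dx \le \liminf_{\ez \to 0} \int_U \langle Du^\ez, Du^\ez - DP\rangle^2 |Du^\ez|^6 \xi^2\,dx,$$
and this closes the inequality.

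The main obstacle is verifying convergence of the various high-power mixed products, in particular $|Du^\ez|^{12}|Du - DP|^2(u^\ez - P)^2\xi^2$ entering via Cauchy--Schwarz and $f^\ez_i u^\ez_i |Du^\ez|^4 (u^\ez-P)^2\xi^2$ in the last term. For the former, uniform $L^p_\loc$-boundedness of $Du^\ez$ together with a.e. convergence and uniform convergence of $(u^\ez-P)^2$ gives $L^1_\loc$-convergence of the integrand by Vitali's theorem. For the latter, one splits the $Du^\ez \cdot Df^\ez$ pairing by H\"older on the conjugate pair $(q',q)$, invoking $Du^\ez \to Du$ in $L^{q'}_\loc$ (by Lemma \ref{strong converge}(i), since $q'<\infty$) and $Df^\ez \to Df$ in $L^q_\loc$; the remaining factor $|Du^\ez|^4(u^\ez-P)^2\xi^2$ converges uniformly or in any $L^p_\loc$. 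Assembling these convergences with the bound obtained after substituting Lemma \ref{DDuez}(i) into Lemma \ref{flat} yields the stated flatness inequality for $u$.
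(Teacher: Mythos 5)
Your proposal is correct and follows the paper's own proof almost exactly: apply Lemma \ref{flat} to the smooth approximations $u^\ez$, control the Cauchy--Schwarz factor $\bigl[\int_U |D^2u^\ez Du^\ez|^2\xi^2\,dx\bigr]^{1/2}$ through Lemma \ref{DDuez}(i) with $\alpha=2$, and pass to the limit $\ez\to 0$ using Lemma \ref{strong converge}, Lemma \ref{uniform conv}, and the mollification properties of $f^\ez$. The only cosmetic difference is that the paper records the left-hand limit as an equality via Lemma \ref{strong converge} and then takes a single $\liminf$ over the product on the right, whereas you invoke Fatou on the left and Vitali/H\"older term by term on the right; both are valid routes to the same inequality.
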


\begin{proof}
Let $\xi\in C_c^2(\Omega)$, and $U\Subset\Omega$ such that $V:={\rm supp\,}\xi\Subset U$.
For  $\ez\in (0,\,\ez_U)$, let $u^\ez\in C^\fz(U)$ be a solution to \eqref{infty ezf}.
By Lemma \ref{strong converge}, we have
$$ \int_{U}\langle Du , D u  -DP \rangle ^2|Du |^6\xi^2  \,dx
=\lim_{\ez\to0}\int_{U}\langle Du^{\ez}, D u^\ez -DP \rangle ^2|Du^\ez|^6\xi^2  \,dx.$$
By Lemma \ref{DDuez}(i), we have
\begin{align*}\liminf_{\ez\to0}
\int_{U}   |D^2u^\ez Du^\ez|^2   \xi^2 \,dx &\le C\liminf_{\ez\to0}\left[
\int |Du^\ez|^4(|D\xi|^2+|D^2\xi||\xi|)\,dx+C   \left|\int_{U}
f^\ez _iu^\ez _i\xi^2  \,dx\right|\right]\\
&=C\int |Du |^4(|D\xi|^2+|D^2\xi||\xi|)\,dx+C   \left|\int_{U}
f _iu _i\xi^2  \,dx\right| .
 \end{align*}
Using these, Lemma \ref{flat} and Lemma \ref{strong converge}, we obtain
\begin{align*}
 &\int_{U}\langle Du , D u  -DP \rangle ^2|Du |^6\xi^2  \,dx \\
 &\quad=\lim_{\ez\to0}\int_{U}\langle Du^{\ez}, D u^\ez -DP \rangle ^2|Du^\ez|^6\xi^2  \,dx\\
& \quad\le \liminf_{\ez\to0} \left[ \int_{U}   |D^2u^\ez Du^\ez|^2   \xi^2 \,dx \right]^{1/2}
\left[ \int_{U}    |Du^{\ez}|  ^{12}       |Du-DP|^2    (u^\ez-P)^2\xi^2 \,dx\right]^{1/2}\\
&\quad\quad+C  \liminf_{\ez\to0}\int_{U}    [|Du^{\ez}|  ^8(|D\xi|^2 +|D^2\xi||\xi|)+|Du^{\ez}|  ^2 (f^\ez)^2\xi^2](u^\ez-P)^2  \,dx\\
&\quad\quad +C  \liminf_{\ez\to0} \left|\int_{U}
f^\ez_iu^\ez_i|Du^{\ez}|  ^4      (u^\ez-P)^2\xi^2  \,dx\right|\\
& \quad\le  \left[\int |Du |^4(|D\xi|^2+|D^2\xi||\xi|)\,dx+ \left|\int_{U}
f _iu _i\xi^2  \,dx\right|\right]^{1/2}\\
&\quad\quad\quad\times
\left[ \int_{U}    |Du |  ^{12}       |Du-DP|^2    (u -P)^2\xi^2 \,dx\right]^{1/2}\\
&\quad\quad+C  \int_{U}    [|Du |  ^8(|D\xi|^2 +|D^2\xi||\xi|)+|Du |  ^2 (f )^2\xi^2](u -P)^2  \,dx\\
&\quad\quad +C   \left|\int_{U}
f _iu _i|Du |  ^4      (u -P)^2\xi^2  \,dx\right|
 \end{align*}
 as desired.
\end{proof}

\section{Proof of Theorem \ref{mainthm}  when  $ f\in  BV_\loc(\Omega)\cap C^0(\Omega)$}

Suppose $\Omega \Subset\mathbb R^2$,   and $ f\in BV_\loc(\Omega)\cap C^0(\Omega)$ with  $f>0$ in $\Omega$.
Given any domain $U \Subset\Omega$,  let  $ \ez_U $, $ \wz U$ and $ f^\ez$ for $ \ez\in (0,\ez_U]$ be as in Section 3.
 For $\dz\in(0, \ez_U )$, we have
  $$\|  f^\dz\|_{C^0(\overline U)}\le \|f\|_{C^0(\overline{\wz U})}\quad{\rm and }\quad \|  f^\dz\|_{BV(U)}\le  \|f\|_{BV(\wz U)}.$$
 For any ball $B=B(x,R)\subset  U$ with radius $R$, if $\dz\le \min\{R,\ez_U\}$, we have
  $$\|  f^\dz\|_{C^0(\overline B)}\le \|f\|_{C^0(2\overline B)}\quad{\rm and }\quad \|  f^\dz\|_{BV(B)}\le  \|f\|_{BV(2B)}.$$

For each  $  \dz \in(0,\ez_U]$,   let $ \hat u^\dz\in   C(\overline U)$ be a   solution to
\begin{equation}\label{infty dzf}
-\Delta_{\infty} \hat u^\dz =f^\dz \quad \text{in $U$};\quad u^\ez=u   \quad \text{on $\partial  U$}.
 \end{equation}
Since $f^\dz\in C^\fz(U)$ and $f^\dz>0$ in $U$, as proved in Section 4, Theorem \ref{mainthm} and Lemma \ref{flatu} hold  for $\hat u^\dz$ in $U$.
By Lemma \ref{stability}, we know that  $ \hat u^\dz\to u$ in $C^{0}(\overline U)$. Below we obtain a Sobolev convergence, which is crucial to prove Theorem \ref{mainthm} when    $0<f\in  BV_\loc(\Omega)\cap C^0(\Omega)$.

\begin{lem}\label{strong stability}
\begin{enumerate}
\item[(i)]
For $\alpha>3/2$, we have
 $|D  \hat u^\dz|^{\alpha}\to  |D u |^{\alpha}$ in $L^{ p}_\loc(U)$ for all $p\ge1$ and weakly in $W^{1,2}_\loc(U)$.
 Moreover,   $\hat u^\dz\to u$ in $W^{1,p}_\loc(U)$  for all $p\ge1$.

\item[(ii)] For $\alpha\in(0,3/2]$ and $p\in[1, 3/(3-\alpha))$, we have
 $|D  \hat u^\dz|^{\alpha}\to  |D u |^{\alpha}$ in $L^{ t}_\loc(U)$ for all $t\ge1$ and weakly in $W^{1,p}_\loc(U)$.
 \end{enumerate}
 \end{lem}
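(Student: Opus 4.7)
\medskip
\noindent\textbf{Proof plan for Lemma \ref{strong stability}.}
The strategy mirrors the proof of Lemma \ref{strong converge}, but with the regularization parameter $\ez$ replaced by the mollification parameter $\dz$, and with uniform bounds coming from the $BV$-norm of $f^\dz$ together with the uniform Lipschitz bound on $\hat u^\dz$. The first step is to produce the right uniform estimates. Since $f^\dz\in C^\infty(U)$ and $f^\dz>0$, Theorem \ref{mainthm} and the flatness Lemma \ref{flatu} already apply to $\hat u^\dz$. By Lemma \ref{stability}, $\hat u^\dz\to u$ in $C^0(\overline U)$, so $\|\hat u^\dz\|_{C^0(\overline U)}\le C$ uniformly in $\dz$; combined with Lemma \ref{lipu} and the uniform control $\|f^\dz\|_{C^0(\overline B)}\le\|f\|_{C^0(2\overline B)}$, one obtains $\|\hat u^\dz\|_{C^{0,1}(B)}\le C(B)$ for every $B\subset 2B\Subset U$, uniformly in $\dz\in(0,\ez_U]$.

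\smallskip
The second step is to use \eqref{strongest} (applied to $\hat u^\dz$) to upgrade these Lipschitz bounds to uniform $W^{1,2}_\loc(U)$-estimates for $|D\hat u^\dz|^\alpha$ when $\alpha>3/2$. Taking a cutoff $\xi\in C_c^2(U)$ and writing $\Phi^i:=\hat u^\dz_i|D\hat u^\dz|^{2\alpha-4}\xi^2$, the troublesome term on the right-hand side of \eqref{strongest} is
\[
\left|\int_U f^\dz_i\,\Phi^i\,dx\right|
\le \|\Phi\|_{L^\infty(U)}\,\|f^\dz\|_{BV(U)}
\le C(\alpha,\xi)\,\|\hat u^\dz\|_{C^{0,1}(\mathrm{supp}\,\xi)}^{2\alpha-3}\,\|f\|_{BV(\wz U)},
\]
which is uniformly bounded in $\dz$ by the first step and the inequality $\|f^\dz\|_{BV(U)}\le\|f\|_{BV(\wz U)}$. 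Combined with the uniform control of the $L^{2\alpha}$-norms of $|D\hat u^\dz|$ (coming from the uniform Lipschitz bound), this yields $\||D\hat u^\dz|^\alpha\|_{W^{1,2}(V)}\le C(V,\alpha)$ for every $V\Subset U$, and in particular (taking $\alpha=2$) a uniform $W^{1,2}_\loc(U)$-bound on $|D\hat u^\dz|^2$.

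\smallskip
The third step is the identification of the weak limit, which is the main obstacle. By weak compactness in $W^{1,2}_\loc(U)$ and Rellich--Kondrachov, a subsequence of $|D\hat u^\dz|^2$ converges to some $h\in W^{1,2}_\loc(U)$ strongly in all $L^p_\loc(U)$ and weakly in $W^{1,2}_\loc(U)$. I would then repeat the Lebesgue-point argument from the proof of Lemma \ref{strong converge}(i): at any point $\bar x$ where $u$ is differentiable and which is a Lebesgue point of $Du$ and powers of $h$, apply the flatness Lemma \ref{flatu} with $P(x)=u(\bar x)+\langle Du(\bar x),x-\bar x\rangle$ and with $\xi$ a cutoff adapted to $B(\bar x,r)$, to $\hat u^\dz$. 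The right-hand side of that flatness inequality is controlled by the uniform Lipschitz bound on $\hat u^\dz$, the uniform $BV$-bound on $f^\dz$, and the differentiability oscillation $\lambda=o(1)$; passing first to the limit $\dz\to0$ (using strong $L^p_\loc$-convergence of $|D\hat u^\dz|^2$, uniform bounds for the $BV$-term, and weak $L^2_\loc$-convergence of $D\hat u^\dz$) and then $r\to0$, we conclude $|h(\bar x)-|Du(\bar x)|^2|^2|h(\bar x)|^3\le C(\bar x)\lambda$ for every $\lambda>0$, whence $h=|Du|^2$ almost everywhere. This pins down the limit independently of the subsequence, so the full sequence converges; strong $L^p_\loc$-convergence of $|D\hat u^\dz|^2$ together with weak $L^p_\loc$-convergence of $D\hat u^\dz$ gives $\hat u^\dz\to u$ in $W^{1,p}_\loc(U)$ for all $p\ge1$. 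For $\alpha>3/2$, the pointwise convergence combined with the uniform Lipschitz bound and dominated convergence gives $|D\hat u^\dz|^\alpha\to|Du|^\alpha$ in $L^p_\loc(U)$, and the $W^{1,2}_\loc$-uniform bound then yields weak $W^{1,2}_\loc$-convergence. This proves (i).

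\smallskip
For part (ii), with $\alpha\in(0,3/2]$ and $p\in[1,3/(3-\alpha))$, I would follow the scheme of Remark \ref{relthm}(ii): pick $\beta\in(3/2,3/p-3/2+\alpha)$ and $\gamma$ with $3/2<\gamma<3$ satisfying $\gamma-3=p(\alpha-\beta)/(2-p)$, and write
\[
|D|D\hat u^\dz|^\alpha|^p\le C(\alpha,\beta,p)\bigl[|D|D\hat u^\dz|^\beta|^2+(f^\dz)^{-2}|D|D\hat u^\dz|^\gamma|^2\bigr]\quad\text{a.e.}
\]
which is justified by Theorem \ref{mainthm}(iv) applied to $\hat u^\dz$, together with $|f^\dz|$ bounded away from zero on compact sets. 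Both terms on the right are uniformly bounded in $L^1_\loc(U)$ by part (i) of the present lemma applied to $\alpha=\beta$ and $\alpha=\gamma$, so $|D\hat u^\dz|^\alpha$ is bounded in $W^{1,p}_\loc(U)$. A further weak-compactness/Rellich argument, using the pointwise a.e.\ convergence $|D\hat u^\dz|\to|Du|$ from part (i) and the uniform Lipschitz bound to upgrade to $L^t_\loc$-convergence for all $t\ge1$, completes the proof of (ii).
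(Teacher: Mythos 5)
Your proposal follows essentially the same route as the paper's own proof: establish uniform Lipschitz bounds on $\hat u^\dz$ from Lemma \ref{lipu} and the maximum/stability lemmas, use the $BV$-duality estimate for the $\int f^\dz_i\Phi^i$ term in \eqref{strongest} to get uniform $W^{1,2}_\loc$ bounds on $|D\hat u^\dz|^\alpha$ (this is exactly the paper's Lemma \ref{uniform DDudz}), extract a weak limit $h$ by compactness, and identify $h=|Du|^2$ at a.e.\ Lebesgue point via the flatness estimate of Lemma \ref{flatu} applied to $\hat u^\dz$, with part (ii) reduced to part (i) by the pointwise inequality from Remark \ref{relthm}(ii). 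The argument is correct and matches the paper's structure.
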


To prove this lemma, we need the following uniform Sobolev estimates.
 \begin{lem} \label{uniform DDudz}

\begin{enumerate}
\item[(i)]
  For $\alpha>3/2$, we have
    $|D \hat u^\dz|^{\alpha} \in  W^{1,2}_\loc(U)$ uniformly in $\dz\in(0,\ez_U)$.

  \item[(ii)]   For $\alpha\in(0,3/2]$ and $p\in(1,3/(3-\alpha))$, we have $|D \hat u^\dz|^{\alpha} \in  W^{1,p}_\loc(U)$ uniformly in $\dz\in(0,\ez_U)$.
 \end{enumerate}
   \end{lem}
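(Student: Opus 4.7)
My approach is to apply Theorem \ref{mainthm} directly to $\hat u^\dz$, which is legitimate since $f^\dz\in C^\infty(U)\subset (\cup_{q>1}W^{1,q}_\loc(\Omega))\cap C^0(\Omega)$ with $f^\dz>0$, and then to control the resulting right-hand sides uniformly in $\dz$ using the $BV$-structure of $f$ together with a uniform Lipschitz bound on $\hat u^\dz$. The Lipschitz bound comes first: Lemma \ref{stability} gives $\hat u^\dz\to u$ in $C^0(\overline U)$, so $\sup_{\dz}\|\hat u^\dz\|_{C^0(\overline U)}<\infty$; combined with $\|f^\dz\|_{C^0(\overline U)}\le\|f\|_{C^0(\overline{\wz U})}$, Lemma \ref{lipu} applied on balls $B\subset 2B\Subset U$ together with a chain-of-balls argument produces, for every $V\Subset U$, a constant $M(V)$ and some $\dz_V>0$ with $\sup_{\dz\in(0,\dz_V]}\|\hat u^\dz\|_{C^{0,1}(V)}\le M(V)$.

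For (i), fix $\alpha>3/2$ and $\xi\in C^2_c(U)$, and choose $V\Subset U$ containing $\mathrm{supp}\,\xi$. Since Theorem \ref{mainthm} was established in Section 4 for data in $(\cup_{q>1}W^{1,q}_\loc)\cap C^0$, estimate \eqref{strongest} applies to $\hat u^\dz$, yielding
\begin{align*}
\int_U |D|D\hat u^\dz|^\alpha|^2 \xi^2\,dx &\le C(\alpha)\int_U |D\hat u^\dz|^{2\alpha}(|D\xi|^2+|D^2\xi||\xi|)\,dx\\
&\quad+C(\alpha)\left|\int_U (f^\dz)_i (\hat u^\dz)_i |D\hat u^\dz|^{2\alpha-4}\xi^2\,dx\right|.
\end{align*}
The first integral is bounded by $C(\alpha,\xi)M(V)^{2\alpha}$. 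For the second, set $\Phi^i:=(\hat u^\dz)_i |D\hat u^\dz|^{2\alpha-4}\xi^2$; since $2\alpha-3>0$, the Lipschitz bound yields $\|\Phi\|_{L^\infty(U)}\le C(\alpha)M(V)^{2\alpha-3}\|\xi\|_{L^\infty}^2$. Because $f^\dz$ is smooth, $\|f^\dz\|_{BV(U)}=\|Df^\dz\|_{L^1(U)}$, and the standard mollification bound gives $\|f^\dz\|_{BV(U)}\le\|f\|_{BV(\wz U)}$; H\"older's inequality then yields
$$\left|\int_U (f^\dz)_i \Phi^i\,dx\right|\le \|Df^\dz\|_{L^1(U)}\|\Phi\|_{L^\infty(U)}\le C(\alpha)M(V)^{2\alpha-3}\|\xi\|_{L^\infty}^2\|f\|_{BV(\wz U)},$$
which proves (i).

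For (ii), fix $\alpha\in(0,3/2]$ and $p\in[1,3/(3-\alpha))$, pick $\beta\in(3/2,3/p-3/2+\alpha)$, and set $\gamma:=(\alpha-\beta)p/(2-p)+3$; the choice of $\beta$ ensures $\gamma>3/2$. The pointwise estimate of Remark \ref{relthm}(ii) applied to $\hat u^\dz$ reads
$$|D|D\hat u^\dz|^\alpha|^p\le C(\alpha,\beta,p)\left[|D|D\hat u^\dz|^\beta|^2+\frac{1}{(f^\dz)^2}|D|D\hat u^\dz|^\gamma|^2\right]\quad\text{a.e. in }U.$$
Since $\beta,\gamma>3/2$, part (i) provides uniform $L^2_\loc(U)$-bounds on both gradient terms, while $1/f^\dz$ is uniformly bounded on $\overline V$ for any $V\Subset U$ because $f\in C^0(\Omega)$ with $f>0$; integrating over $V$ then yields (ii). The step requiring most care is the uniform Lipschitz bound, since Lemma \ref{lipu} is only local and one must chain balls interior to $U$ while keeping the constant independent of $\dz$; every subsequent step is a direct substitution into identities already at hand.
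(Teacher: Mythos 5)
Your proof is correct and follows essentially the same route as the paper: apply the estimate \eqref{strongest} (valid for $\hat u^\dz$ since $f^\dz$ is smooth), obtain a uniform Lipschitz bound on $\hat u^\dz$ via Lemma \ref{lipu} together with a uniform $C^0$-bound (the paper uses the maximum principle Lemma \ref{max} rather than the stability Lemma \ref{stability}, but either suffices), bound the inhomogeneous term by $\|f^\dz\|_{BV}\|D\hat u^\dz\|_{L^\infty}^{2\alpha-3}$, and deduce (ii) from (i) via the pointwise estimate of Remark \ref{relthm}(ii) applied to $\hat u^\dz$. The only cosmetic difference is that the paper applies Lemma \ref{lipu} directly ball-by-ball without the "chain-of-balls" framing.
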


   \begin{proof} Notice that by Remark 1.3 (ii) to $\hat u^\dz$, we know that here (ii) follows from (i).
   Below, we prove (i).
  For $\alpha>3/2$, by Lemma \ref{lipu} and Lemma \ref{max}, we know that
  for each ball $B\subset 2B \Subset U$ with radius $R$ and $\dz<\ez_U$,
\begin{align*}
\|D\hat u^\dz\|_{L^\fz( B)}&\le  \frac CR\|\hat u^\dz\|_{C^0(2\overline B)}+C(R\|f^\dz\|_{C^0(2\overline B)})^{1/3} \le  \frac CR\|  u \|_{C^0(\overline U)}+  C(R\|f \|_{C(\overline {\wz U})} )^{1/3}.
  \end{align*}
Therefore,    for each ball $B\subset 4B \Subset U$ with radius $R$ and $\dz<\ez_U$,  and for any $\xi\in C_c^1(2B) $  with $0\le \xi\le1$, we have
\begin{align}\label{dudf}
\int_U f^\dz_i \hat u^\dz_i|D\hat u^\dz|^{2\alpha-4}\xi\,dx&\le
\|f^\dz\|_{BV(2B)}\|D\hat u^\dz\|^{2\alpha-3}_{L^\fz(2B)}\nonumber\\
&\le C\|  f \|_{BV(\wz U)}\left[\frac1R\| u \|_{C^0(\overline U)}+ (R\|f \|_{C(\overline{\wz U})})^{1/3}\right].
\end{align}
 Applying  to $f^\dz, \hat u^\dz$ with   $\xi\in C^2_c(2 B)$ satisfying $\xi=1$ on $B$, $0\le \xi\le1$ and
$|D\xi| +|D^2\xi|^{1/2}\le\frac CR$,    we obtain
\begin{align*}
  \int_B  |D |D \hat u^\dz|^{\alpha}  |^2   \,dx &\le \int_U  |D |D \hat u^\dz|^ {\alpha}  |^2 \xi^2  \,dx\\
  &\le C ({ \alpha})   \int_{U}|D  \hat u^{\dz}| ^{ 2\alpha } (|D\xi|^2 +|D^2\xi||\xi|)\,dx + C(\alpha)\left| \int_{U}    \hat u^\dz_i    f^\dz_i |D\hat u^\dz|^{2\alpha-4} \xi^2  \, dx \right| \\
  &\le C ({ \alpha}) \|D  \hat u^{\dz}\| ^{ 2\alpha } _{L^\fz(2B)}+C(\alpha)\|f^\dz\|_{BV(2B)}\|D\hat u^\dz\|^{2\alpha-3}_{L^\fz(2B)}
\end{align*}
which is then bounded uniformly in  $\dz\in (0,\ez_U)$, that is,   $|D \hat u^\dz|^{\alpha} \in  W^{1,2}_\loc(U)$ uniformly in $\delta>0$.
\end{proof}

The idea of the proof of Lemma \ref{strong stability} is  similar to that of Lemma \ref{strong converge}.
% we give the necessary details below.

 \begin{proof}[Proof of Lemma \ref{strong stability}]

  By Lemma \ref{uniform DDudz} (i) for $\alpha=2$ we know that
$D|D\hat u^\dz| ^2\in W^{1,2}_\loc(U) $ uniformly in $\dz$.
From the weak compactness of $W_\loc^{1,2}(U)$, it follows that
$|D\hat u^\dz| ^2 $ converges, up to some subsequence,  to some function $\hat h  $ in $L^t_\loc(U)$ for all $t\ge1$
and weakly  in $W^{1,2}_\loc(U)$ as $\dz\to0$.
By Lemma \ref{uniform DDudz} again and a similar reason as in the proof of Lemma \ref{strong converge}, the proof of Lemma \ref{strong stability} is reduced to proving  $\hat h =|Du|^2 $   almost everywhere. Here we omit the details.

%Similarly, by Lemma \ref{uniform DDudz} (ii) for $\alpha\in(0,3/2]$ and $p\in(0,3/(3-\alpha))$,
%$D|D\hat u^\dz| ^{\alpha}\in W^{1,p}_\loc(U) $ uniformly in $\dz$.
%From the weak compactness of $W_\loc^{1,p}(U)$, it follows that $|D\hat u^\dz| ^{\alpha} $ converges, up to some subsequence,  to some function %$\hat h^{(\alpha)} $ in $L^t_\loc(U)$ for all $t\in[1,2p/(2-p))$ and weakly  in $W^{1,2}_\loc(U)$ as $\dz\to0$.

Below we  prove  $\hat h(\bar x) =|Du(\bar x)|^2 $ for all $\bar x\in U$ satisfying that
 $u$ is differentiable at $\bar x$, and  $\bar x$ is Lebesgue point of $[h]^{14}$  and $ Du $.
Note that the set of such $\bar x$ has full measure in $U$.

If $h(\bar x)=0$, similarly to the proof of Lemma \ref{strong converge}, we have $|Du(\bar x)|^2=0$.

Assume that $h(\bar x) >0$ below.
For any $\lambda\in(0,1)$, thanks to the differentiability at $\bar x$ of $u$,
there exists  $r_{\lambda,\bar x}\in (0, \dist(\bar x, \partial U)/8)$ such that for any $ r\in(0,r_{\lambda,\bar x})$, we have
$$ \sup_{B(\bar x, 2r)}\frac{|u(x) - u(\bar x)-\langle Du(\bar x),(x-\bar x)\rangle | }{r }\le \lambda.$$
By Lemma \ref{stability}, for arbitrary     $ r\in(0,r_{\lambda,\bar x})$, there exists $\dz_{\lambda,\bar x, r}\in(0, r]$ such that for all $\dz\in(0,\dz_{\lambda,\bar x,r })$, we have
$$ \sup_{B(\bar x, 2r)}\frac{|\hat u^\dz(x) -  \hat u^\dz(\bar x)-\langle Du(\bar x),(x-\bar x)\rangle| }{r }\le 2\lambda.$$
By the same argument as in the proof of Lemma \ref{strong converge}, to obtain
$h (\bar x)=|Du(\bar x)|^2 $,
it suffices to prove that
for all $r\in(0,r_{\lambda,\bar x})$ and $ \dz\in(0,\dz_{\lambda,\bar x,r})$,
\begin{align}\label{flatudz}
 \bint_{B(\bar x,r)} (|Du ^{\dz}|^2- \langle Du(\bar x),D\hat u^\dz\rangle  )^2|Du ^{\dz}|^6 \,dx\le
 C (\bar x, \|u\|_{C^0(\overline U)},\|f\|_{C^0(\overline U)},\|Df\|_{BV(U)}) \lambda .
\end{align}
We omit the details here.

To prove \eqref{flatudz},    applying  Lemma \ref{flatu} to $\hat u^\dz$ and
 $P(x)= \hat u^\dz (\bar x)-\langle Du(\bar x),(x-\bar x)\rangle$,
    we get
 \begin{align*}
& \int_{U}\langle D\hat u^\dz  , D u ^\dz -DP \rangle ^2|D\hat u^\dz |^6  \xi^2\,dx\\
& \le C\left  [  \int_{U}|Du ^\dz| ^4 |D\xi|^2\,dx +  \left|\int_{U}   \hat u^\dz_if^\dz_i \xi^2  \, dx \right|  \right]^{1/2}
\left [ \int_{U}    |D\hat u^\dz |  ^{12}       |D\hat u^\dz-DP|^2    (\hat u^\dz -P)^2\xi^2 \,dx\right ]^{1/2}\\
&\quad+C  \int_{U}    [|D\hat u^\dz |  ^8(|D\xi|^2 +|D^2\xi||\xi|) +|D\hat u^\dz |  ^2 (f^\dz )^2\xi^2] (\hat u^\dz-P)^2  \,dx\\
&\quad+C\left|\int_U f^\dz_i \hat u^\dz_i|D\hat u^\dz|  ^4 \xi^2   (\hat u^\dz-P)^2  \,dx\right|.
 \end{align*}

For any $B=B(\bar x, r)$ with $r\in(0,r_{\lambda,\bar x})$ and $ \dz\in(0,\epsilon_{\lambda,\bar x,r})$
 taking suitable cut-off function   $\xi\in C^2_c(2  B)$ satisfying $\xi=1$ on $B$, $0\le \xi\le1$ and
$|D\xi| +|D^2\xi|^{1/2}\le\frac Cr$.
Then
$$\int_{B}\langle D\hat u^\dz  , D u ^\dz -DP \rangle ^2|D\hat u^\dz |^6  \,dx\le
\int_{U}\langle D\hat u^\dz  , D u ^\dz -DP \rangle ^2|D\hat u^\dz |^6  \xi^2\,dx.$$

Moreover, for the first term in the right hand side,
 by Lemma \ref{lipu}  we obtain
  \begin{align*}
 & \int_{U}|D\hat u^\dz| ^4 |D\xi|^2\,dx +  \int_{U}   \hat u^\dz_if^\dz_i \xi^2  \, dx\\
  &\quad\le C\bint_{2B}|D\hat u^\dz| ^4\,dx+ Cr^2\|f^\dz\|_{BV(2B)}\|D\hat u^\dz\|_{L^\fz(2B)} \\
  &\quad\le C \|D\hat u^\dz\| _{L^\fz(B(\bar x, \frac14\dist(\bar x,\partial U)))}[1+ Cr^2\|\hat f \|_{BV(U)}]\\
  &\quad\le C   \left[ \frac1{\dist(\bar x,\partial U)}\|u\|_{C^0(\overline U )}+ (\dist(\bar x,\partial U)\|f\|_{C^0(\overline U)})^{1/3}\right]^4\\
  &\quad\quad +
   C \dist(\bar x,\partial U)\|\hat f \|_{BV(U)}\left[ \frac1{\dist(\bar x,\partial U)}\|u\|_{C^0(\overline U )}+ (\dist(\bar x,\partial U)\|f\|_{C^0(\overline U)})^{1/3}\right],
       \end{align*}
and
    \begin{align*}
&  \int_{U}    |D\hat u^\dz |  ^{12}       |D\hat u^\dz-DP|^2    (\hat u^\dz -P)^2\xi^2 \,dx\\&\quad\le
   r^4\lz^2\|D\hat u^\dz\|^{12}_{L^\fz(2B)} [\|D\hat u^\dz\|^2_{L^\fz(2B)} +|Du(\bar x)|^2]\\
    &\quad\le C r^4\lz^2   \left  [ \frac1{\dist(\bar x,\partial U)}\|u\|_{C^0(\overline U )}+ (\dist(\bar x,\partial U)\|f\|_{C^0(\overline U)})^{1/3}\right] ^{14}.
      \end{align*}
For the second and third terms in the right hide side, similarly we have
\begin{align*}
& \int_{U}    [|D\hat u^\dz |  ^8(|D\xi|^2 +|D^2\xi||\xi|) +|D\hat u^\dz |  ^2 (f )^2\xi^2 ]     (\hat u^\dz-P)^2  \,dx \\
&\quad\le Cr^2\lz^2 [\|D\hat u^\dz\|^8_{L^\fz(2B)}+ r^2\|f^\dz\|^2_{C^0(2\overline B)}\|D\hat u^\dz\|^2_{L^\fz(2B)} ]\\
&\quad\le C r^2\lz^2\left\{\left[ \frac1{\dist(\bar x,\partial U)}\|u\|_{C^0(\overline U )}+ (\dist(\bar x,\partial U)\|f\|_{C^0(\overline U)})^{1/3}\right] ^8\right.\\
&\quad\quad+\left.
 \dist(\bar x,\partial U)^2\|\hat f \|^2_{C^0(\overline U)}
\left[ \frac1{\dist(\bar x,\partial U)}\|u\|_{C^0(\overline U )}+ (\dist(\bar x,\partial U)\|f\|_{C^0(\overline U)})^{1/3}\right] ^2
\right\} ,
 \end{align*}
 and
  \begin{align*}
 &\left|\int_U f^\dz_i \hat u^\dz_i|D\hat u^\dz|  ^4 \xi^2   (\hat u^\dz-P)^2  \,dx\right|\\
 &\quad\le C r^2\lz^2\|Df^\dz\|_{BV(2B)} \|D\hat u^\dz\|^5_{L^\fz(2B)}\\
 &\quad\le C r^2\lz^2 \|Df^\dz\|_{BV(U)}\left[ \frac1{\dist(\bar x,\partial U)}\|u\|_{C^0(\overline U )}+ (\dist(\bar x,\partial U)\|f\|_{C^0(\overline U)})^{1/3}\right] ^5.
  \end{align*}
Combining all estimates together we have \eqref{flatudz} as desired.
 This completes the proof of Lemma  \ref{strong stability}.
 \end{proof}

Finally, we prove Theorem \ref{mainthm} when $0<f\in BV_\loc(\Omega)\cap C^0(\Omega)$ as below.

\begin{proof}[Proof of Theorem \ref{mainthm} when $0<f\in BV_\loc(\Omega)\cap C^0(\Omega)$.]

By Lemma \ref{strong stability}, we have
$|Du|^{\alpha}\in W^{1,2}_\loc(U)$ when $\alpha>3/2$, and $|Du|^{\alpha}\in W^{1,p}_\loc(U)$ when $\alpha\in(0,3/2]$ and $p\in[1,3/(3-\alpha))$.  This gives (ii), and also reduces
(i) to verifying   \eqref{ex01}.
Moreover, by Remark \ref{relthm} (i), we know that (iii) follows from (i) and (iv).
 So below we only need to prove   \eqref{ex01}, \eqref{strongest} when $f\in W^{1,1}_\loc(\Omega)$ additionally,  and (iv), that is, \eqref{pw} and \eqref{chain}.
Recall that, as proved in Section 4,  Theorem \ref{mainthm} holds for $\hat u^\dz$ for any $\dz\in(0,\ez_U)$.

{\it Proof of \eqref{strongest} when $f\in W^{1,1}_\loc(\Omega)$ additionally.}
By $\hat u^\dz\to u$ in $W^{1,p}_\loc(U)$ and $|D\hat u^\dz|^{2\alpha-4} \to |D u |^{2\alpha-4} $ in $L^p_\loc(U)$ for any $p\ge1$ as $\dz\to0$
 implies that $|D\hat u^\dz|^{2\alpha-3}D\hat u^\dz\to |D u |^{2\alpha-3}Du$ in weak-$\ast$ topology of $L^1_\loc(U)$.
Observing $f^\dz\to f$ in $W^{1,1}_\loc(U)$, we have
$$\int_\Omega f^\dz_i u^\dz_i|D\hat u^\dz|^{2\alpha-4}\xi^2\,dx\to \int_\Omega f_i u_i|Du|^{2\alpha-4}\xi^2\,dx$$
as $\ez\to0$.
Since \eqref{strongest} holds for $\hat u^\dz$,  by Lemma 5.1 and 5.2 we futher obtain \eqref{strongest} for $u$ as desired.

{\it Proof of \eqref{ex01}.} By Lemma \ref{strong stability},
for $\alpha>3/2$ and for all $B\subset 8B\Subset U$   we have
\begin{align*}
 &
 \int_B  |D |D  u |^{\alpha}  |^2   \,dx
 \le \liminf_{\dz\to0}  \int_B  |D |D \hat u^\dz|^{\alpha}  |^2   \,dx\le  \lim_{\dz\to0} \int_U  |D |D \hat u^\dz|^ {\alpha}  |^2 \xi^2  \,dx,\end{align*}
where  $\xi\in C^2_c(2  B)$ satisfying $\xi=1$ on $B$, $0\le \xi\le1$ and
$|D\xi| +|D^2\xi|^{1/2}\le\frac CR$.
Since \eqref{strongest} holds for $\hat u^\dz$,   we have
 \begin{align*}
  &\int_U  |D |D \hat u^\dz|^{\alpha}  |^2\xi^2   \,dx\\
   &\quad\le C ({ \alpha})  \liminf_{\dz\to0} \int_{U}|D  \hat u^{\dz}| ^{ 2\alpha } (|D\xi|^2 +|D^2\xi||\xi|)\,dx + C(\alpha) \liminf_{\dz\to0}\int_{U}    \hat u^\dz_i    f^\dz_i |D\hat u^\dz|^{2\alpha-4} \xi^2  \, dx  \\
  &\quad\le C ({ \alpha}) \liminf_{\dz\to0}\bint_{2B}|D  \hat u^{\dz} | ^{ 2\alpha }\,dx  +C(\alpha)\liminf_{\dz\to0}\|f^\dz\|_{BV(2B)}\|D\hat u^\dz\|^{2\alpha-3}_{L^\fz(2B)}
\end{align*}
Notice that when $\delta>0$ is sufficient small,
by Lemma \ref{lipu} we have
$$\|D\hat u^\dz\| _{L^\fz(2B)}\le  C\frac1R\| u^\dz\|_{C^0(2\overline B)}+(R\|f^\dz\|_{C^0( 2\overline B)})^{1/3}
\le C\frac1R\| u\|_{C^0(2\overline B)}+C(R\|f\|_{C^0( 4\overline B)})^{1/3}.$$
By  Lemma \ref{strong stability} again, we   conclue
 \begin{align*}
   \int_B  |D |D  u|^{\alpha}  |^2   \,dx &\le C ({ \alpha})\bint_{2B}|D  u | ^{ 2\alpha }\,dx\\
   &\quad +
     C(\alpha)\|f \|_{BV(4B)}[ \frac1R\| u\|_{C^0(4\overline B)}+(R\|f\|_{C^0( 4\overline B)})^{1/3}]^{2\alpha-3} ,
\end{align*}
as desired.

{\it Proof of \eqref{chain}.}
By Lemma \ref{strong stability} and applying \eqref{chain} to $\hat u^\dz$ we have, for any $\Phi\in C^\fz_c(U,\mathbb R^2)$, \begin{align*}
\int_U |Du|^\tau \langle D|Du|^\alpha, \Phi\rangle \,dx&= \lim_{\dz\to0}
\int_U  |D\hat u^\dz|^ \tau  \langle D|D\hat u^\dz|^\alpha,\Phi\rangle \,dx\\
&=\frac{\alpha}{\alpha+\tau} \lim_{\dz\to0}
\int_U  \langle D|D\hat u^\dz|^{\alpha+\tau},  \Phi\rangle \,dx\\
&=\frac{\alpha}{\alpha+\tau}
\int_U  \langle D |Du | ^{ \alpha+\tau },  \Phi\rangle \,dx,
\end{align*}
which gives \eqref{chain}.

{\it Proof of   \eqref{pw}.} Given any $U\Subset\Omega$, by  Lemma \ref{strong stability}  and applying  (iv) to $\hat u^\dz$, we have
\begin{align*}\int_U \langle D|Du|^2,Du\rangle \phi\,dx&=
\lim_{\dz\to0} \int_U \langle D|D\hat u^\dz|^2,D\hat u^\dz\rangle \phi\,dx\\
& = \lim_{\dz\to0} -  2\int_U      f^\dz \phi\,dx =
  -  2\int_U        f  \phi\,dx \quad \forall \phi\in C_c^\fz(U),\end{align*}
  which implies that  $(|Du|^2)_iu_i=-2f$ almost everywhere in $U$ and hence in $\Omega$.
  Note that $|Du| $ vanishes only in a set with measure $0.$
  By this and \eqref{chain}, for $\alpha>2$ or $\alpha\in(0,2)$ we have  $(|Du|^\alpha)_i=\frac\alpha2 |Du|^{ \alpha-2 } (|Du|^2)_i$ almost everywhere, and hence $(|Du|^\alpha)_iu_i= -\alpha |Du|^{ \alpha-2 }f$ almost everywhere in $\Omega.$
  This completes the proof of Theorem \ref{mainthm}.
\end{proof}

\section{Proofs of Lemma \ref{DDuez}, Lemma \ref{Duez} and Lemma \ref{flat}}

We first derive the following identity by taking    $\phi=|Du^\ez|^2(|Du^\ez|^2+\kappa)^{   { \alpha-2}} \xi^2$ in \eqref{functional}
and  applying Lemma \ref{identity1}.

\begin{lem} \label{identity2} Let $\xi\in C^2_c(U)$.
If $ \alpha \ge 2$ and $\kappa\ge 0$ or $ \alpha >0$ and $\kappa>0$, we have
\begin{align*} &\int_{U}  |D^2u^\ez Du^\epsilon |^2  (|Du^\ez|^2+\kappa)^{    \alpha-3} [2(|Du^\ez|^2+\kappa)+({ \alpha-2}) |Du^\ez|^2]\xi^2 \, dx \\
&\quad\quad
+ \ez\int_{U}  | \Delta u^\epsilon |^2  (|Du^\ez|^2+\kappa)^{ \alpha-3} [2(|Du^\ez|^2+\kappa)+({ \alpha-2}) |Du^\ez|^2]\xi^2 \, dx\\
&\quad\quad+\int_{U}\langle Du^{\ez}, D\xi\rangle ^2|Du^\ez|^2 (|Du^\ez|^2+\kappa)^{   { \alpha-2}} \,dx\\
&\quad= -2\int_{U}  (|Du^\ez|^2+\kappa)^{    \alpha-3}[(|Du^\ez|^2+\kappa)+({ \alpha-2})|Du^{\ez}|  ^2]     u^{\ez}_i \xi_i  \Delta_\fz u^{\ez}  \xi \,dx\\
&\quad\quad   -  \int_{U}\xi_{ik} u^{\ez}_k u^{\ez}_i |Du^\ez|^2 (|Du^\ez|^2+\kappa)^{   { \alpha-2}}\xi  \,dx\nonumber\\
&\quad\quad-2\int_{U} |Du^{\ez}| ^2 (|Du^\ez|^2+\kappa)^{   { \alpha-2}}  u^{\ez}_{ik}u^{\ez}_k\xi_i \xi \,dx\nonumber \\
&\quad\quad -\int_{U}
 f^\ez \Delta u^\ez  (|Du^\ez|^2+\kappa)^{    \alpha-3}[2(|Du^\ez|^2+\kappa)+({ \alpha-2}) |Du^\ez|^2] \xi^2 \, dx
\end{align*}
\end{lem}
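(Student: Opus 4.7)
The plan is to evaluate $\mathbb I_\ez(\phi)$ in two ways with the test function
$$\phi = |Du^\ez|^2(|Du^\ez|^2+\kappa)^{\alpha-2}\xi^2 = |Du^\ez|^2 \psi, \qquad \psi := (|Du^\ez|^2+\kappa)^{\alpha-2}\xi^2.$$
Under either hypothesis ($\alpha\ge 2$ with $\kappa\ge 0$, or $\alpha>0$ with $\kappa>0$), $\phi\in W^{1,2}_c(U)$, so both \eqref{functional} and \eqref{identityIII} apply. The latter immediately yields
$$\mathbb I_\ez(\phi)=\int_U |D^2u^\ez Du^\ez|^2\,\psi\,dx+\ez\int_U (\Delta u^\ez)^2\,\psi\,dx+\int_U f^\ez \Delta u^\ez\,\psi\,dx.$$

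For \eqref{functional} I would first compute $\phi_i=2u^\ez_{ki}u^\ez_k F\xi^2+2|Du^\ez|^2 G\,\xi\xi_i$, where
$$F:=(|Du^\ez|^2+\kappa)^{\alpha-3}\bigl[(|Du^\ez|^2+\kappa)+(\alpha-2)|Du^\ez|^2\bigr], \qquad G:=(|Du^\ez|^2+\kappa)^{\alpha-2}.$$
Substituting into $\tfrac12[\Delta u^\ez u^\ez_i\phi_i-u^\ez_{ij}u^\ez_j\phi_i]$ and using the contractions $u^\ez_i u^\ez_{ki}u^\ez_k=\Delta_\fz u^\ez$ and $u^\ez_{ij}u^\ez_j u^\ez_{ki}u^\ez_k=|D^2u^\ez Du^\ez|^2$, one arrives at
$$\mathbb I_\ez(\phi)=\int_U\!\bigl[\Delta u^\ez \Delta_\fz u^\ez-|D^2u^\ez Du^\ez|^2\bigr]F\xi^2\,dx+\int_U|Du^\ez|^2 G\bigl[\Delta u^\ez u^\ez_i-u^\ez_{ij}u^\ez_j\bigr]\xi_i\xi\,dx.$$
The key observation is that the equation \eqref{infty ezf} gives $\Delta u^\ez\Delta_\fz u^\ez=-\ez(\Delta u^\ez)^2-f^\ez\Delta u^\ez$. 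Equating the two expressions for $\mathbb I_\ez(\phi)$ and transposing, the three matching quadratic terms combine on the left with the single coefficient $\psi+F\xi^2=(|Du^\ez|^2+\kappa)^{\alpha-3}\xi^2[2(|Du^\ez|^2+\kappa)+(\alpha-2)|Du^\ez|^2]$, which is precisely the weight in the claimed identity.

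The remaining task is to process $\int_U |Du^\ez|^2 G\,\Delta u^\ez\,u^\ez_i\xi_i\xi\,dx$ by an integration by parts against $u^\ez_{jj}$. Transferring $\partial_j$ off $u^\ez_j$ generates four pieces: the product rule gives $\partial_j(|Du^\ez|^2 G)=2u^\ez_{kj}u^\ez_k F$ (the same $F$ as above), which upon contraction with $u^\ez_j u^\ez_i\xi_i$ yields $-2\Delta_\fz u^\ez\,u^\ez_i\xi_i F\xi$; a second piece $-|Du^\ez|^2 G u^\ez_{ij}u^\ez_j\xi_i\xi$ which, together with its twin already present, gives the overall coefficient $-2$; a third piece $-|Du^\ez|^2 G u^\ez_i u^\ez_j\xi_{ij}\xi$; and a fourth piece $-|Du^\ez|^2 G\langle Du^\ez,D\xi\rangle^2$, whose transposition produces the $\int_U\langle Du^\ez,D\xi\rangle^2|Du^\ez|^2(|Du^\ez|^2+\kappa)^{\alpha-2}\,dx$ term on the left. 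Boundary terms vanish because $\xi\in C^2_c(U)$.

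The main obstacle is purely bookkeeping: one must carefully track the distinction between the weights $F$ and $G$ across the product-rule expansion of $\phi_i$ and the integration by parts, so that the $|D^2u^\ez Du^\ez|^2$, $\ez(\Delta u^\ez)^2$, and $f^\ez\Delta u^\ez$ terms all assemble with the common factor $2(|Du^\ez|^2+\kappa)+(\alpha-2)|Du^\ez|^2$, while the three $D\xi,D^2\xi$ error terms (with weight $(|Du^\ez|^2+\kappa)^{\alpha-2}$) and the $\Delta_\fz$ term (with weight $F$) remain cleanly separated. No new ingredient beyond Lemma \ref{identity1}, the representation \eqref{functional}, and the PDE for $u^\ez$ is required.
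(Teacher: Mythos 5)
Your proposal reproduces the paper's argument exactly: the same test function $\phi = |Du^\ez|^2(|Du^\ez|^2+\kappa)^{\alpha-2}\xi^2$ evaluated via \eqref{identityIII} and \eqref{functional}, the same observation $\partial_k\bigl(|Du^\ez|^2 G\bigr) = 2u^\ez_{ik}u^\ez_i F$ that makes the weights $\psi$ and $F\xi^2$ combine into $(|Du^\ez|^2+\kappa)^{\alpha-3}[2(|Du^\ez|^2+\kappa)+(\alpha-2)|Du^\ez|^2]\xi^2$, and the same integration by parts of $\int |Du^\ez|^2 G\,\Delta u^\ez\, u^\ez_i\xi_i\xi$ yielding the four remainder terms. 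No gap.
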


\begin{proof}
  Let $\psi=   (|Du^\ez|^2+\kappa)^{   { \alpha-2}} \xi^2$ for
 $ \xi\in C_c^\infty(U)$. Then $\phi=\psi|Du^\ez|^2,\psi\in W^{1,\,2}_c(U)$.
By \eqref{identityIII}, we write
\begin{align*}\mathbb I_\epsilon(\phi)&= \int_{U}  |D^2u^\ez Du^\epsilon |^2  (|Du^\ez|^2+\kappa)^{   { \alpha-2}} \xi^2\, dx +\ez\int_{U}   (\Delta u^\ez)^2 (|Du^\ez|^2+\kappa)^{   { \alpha-2}} \xi^2 \, dx\\
&\quad+  \int_{U}    f^\ez \Delta u^\ez  (|Du^\ez|^2+\kappa)^{   { \alpha-2}} \xi^2 \, dx.
\end{align*}

On the other hand, note that $$  \phi_i=2  u^{\ez}_{ik}u^{\ez}_k   (|Du^\ez|^2+\kappa)^{    \alpha-3}  [(|Du^\ez|^2+\kappa)+({ \alpha-2})|Du^{\ez}|  ^2] \xi^2  +  2\xi  \xi_i  |Du^{\ez}| ^{2 }(|Du^\ez|^2+\kappa)^{   { \alpha-2}} .$$
Pluging in  $ \phi _i$ in \eqref{functional}, we have
\begin{align*}
 \mathbb I_\epsilon(\phi)&= \frac12\int_{U} [\Delta u^\ez u^\ez_i\phi_i  - u^\ez_{ij} u^\ez_j \phi_i]\,dx \\
& = -   \int_{U}    |D^2 u^{\ez} Du^{\ez}|^2  (|Du^\ez|^2+\kappa)^{    \alpha-3} [(|Du^\ez|^2+\kappa)+({ \alpha-2})|Du^{\ez}|  ^2] \xi^2\,dx\\
 &\quad- \int_{U} |Du^{\ez}|  ^2(|Du^\ez|^2+\kappa)^{   { \alpha-2}} u^{\ez}_{ij}u^{\ez}_j\xi_i \xi \,dx\\
&\quad +  \int_{U} (|Du^\ez|^2+\kappa)^{    \alpha-3} [(|Du^\ez|^2+\kappa)+({ \alpha-2})|Du^{\ez}|  ^2]
\Delta u^{\ez} \Delta_\fz u^{\ez}  \xi^2 \,dx\\
&\quad+  \int_{U}|Du^{\ez}|  ^2(|Du^\ez|^2+\kappa)^{   { \alpha-2}}\Delta u^{\ez} u^{\ez}_i\xi_i   \xi  \, dx.
\end{align*}
Replacing $\Delta_\fz u^\ez $ by $-\epsilon\Delta u^\ez-f^\ez$ in  third term, we further have
\begin{align*}
&\int_{U} (|Du^\ez|^2+\kappa)^{    \alpha-3} [(|Du^\ez|^2+\kappa)+({ \alpha-2})|Du^{\ez}|  ^2] \Delta u^{\ez}  \Delta_\fz u^{\ez}  \xi^2 \\
&\quad=- \ez\int_{U} (|Du^\ez|^2+\kappa)^{    \alpha-3}  [(|Du^\ez|^2+\kappa)+({ \alpha-2})|Du^{\ez}|  ^2] (\Delta u^{\ez})^2  \xi^2 \,dx\\
&\quad\quad- \int_{U} (|Du^\ez|^2+\kappa)^{    \alpha-3} [(|Du^\ez|^2+\kappa)+({ \alpha-2})|Du^{\ez}|  ^2]   f^\ez\Delta u^{\ez}   \xi^2 \,dx.
\end{align*}
 Therefore

\begin{align*}
&    \int_{U}    |D^2 u^{\ez} Du^{\ez}|^2  (|Du^\ez|^2+\kappa)^{    \alpha-3} [2(|Du^\ez|^2+\kappa)+({ \alpha-2})|Du^{\ez}|  ^2] \xi^2\,dx\\
&\quad\quad+\ez\int_{U} (|Du^\ez|^2+\kappa)^{    \alpha-3}  [2(|Du^\ez|^2+\kappa)+({ \alpha-2})|Du^{\ez}|  ^2] (\Delta u^{\ez})^2  \xi^2 \,dx\\
 &\quad=- \int_{U} |Du^{\ez}|  ^2(|Du^\ez|^2+\kappa)^{   { \alpha-2}} u^{\ez}_{ij}u^{\ez}_j\xi_i \xi \,dx+  \int_{U}|Du^{\ez}|  ^2(|Du^\ez|^2+\kappa)^{   { \alpha-2}}\Delta u^{\ez} u^{\ez}_i\xi_i   \xi  \, dx\\
&\quad\quad- \int_{U} (|Du^\ez|^2+\kappa)^{    \alpha-3} [2(|Du^\ez|^2+\kappa)+({ \alpha-2})|Du^{\ez}|  ^2]   f^\ez\Delta u^{\ez}   \xi^2 \,dx.
\end{align*}

Via integration by parts   we have
\begin{align}
 &- \int_{U} |Du^{\ez}|  ^2(|Du^\ez|^2+\kappa)^{   { \alpha-2}} u^{\ez}_{ij}u^{\ez}_j\xi_i \xi \,dx+  \int_{U}|Du^{\ez}|  ^2(|Du^\ez|^2+\kappa)^{   { \alpha-2}}\Delta u^{\ez} u^{\ez}_i\xi_i   \xi  \, dx \nonumber  \\
 &
 \quad=-  \int_{U}  u^{\ez}_k (|Du^{\ez}|  ^2(|Du^\ez|^2+\kappa)^{   { \alpha-2}}  u^{\ez}_i\xi_i   \xi )_k \, dx- \int_{U} |Du^{\ez}|  ^2(|Du^\ez|^2+\kappa)^{   { \alpha-2}} u^{\ez}_{ij}u^{\ez}_j\xi_i \xi^3\,dx\nonumber\\
& \quad=  -2\int_{U}  (|Du^\ez|^2+\kappa)^{    \alpha-3}[(|Du^\ez|^2+\kappa)+({ \alpha-2})|Du^{\ez}|  ^2]     u^{\ez}_i \xi_i \Delta_\fz u^{\ez}  \xi \,dx\nonumber\\
 & \quad\quad-\int_{U}\langle Du^{\ez}, D\xi\rangle ^2|Du^\ez|^2 (|Du^\ez|^2+\kappa)^{   { \alpha-2}} \,dx\nonumber\\
&\quad\quad   -  \int_{U}\xi_{ik} u^{\ez}_k u^{\ez}_i |Du^\ez|^2 (|Du^\ez|^2+\kappa)^{   { \alpha-2}}  \,dx\nonumber\\
&\quad\quad-2\int_{U} |Du^{\ez}| ^2 (|Du^\ez|^2+\kappa)^{   { \alpha-2}}  u^{\ez}_{ik}u^{\ez}_k\xi_i \xi \,dx\nonumber
\end{align}
 as desired.
\end{proof}

As a consequence of Lemma \ref{identity2}, we have
\begin{cor} \label{identity3}
  Given any $\alpha>0$ and $0<\eta<<1$, for any $\xi\in C_c(U)$ and  $\kappa\ge 0$, we have
\begin{align*} & (1-\eta)\int_{U}  |D^2u^\ez Du^\epsilon |^2  (|Du^\ez|^2+\kappa)^{    \alpha-3} [ (|Du^\ez|^2+\kappa)+({ \alpha-2}) |Du^\ez|^2]\xi^2 \, dx \\
&\quad\quad
+ \ez\int_{U}  | \Delta u^\epsilon |^2  (|Du^\ez|^2+\kappa)^{ \alpha-3} [   (|Du^\ez|^2+\kappa)+ ({ \alpha-2})  |Du^\ez|^2]\xi^2 \, dx\\
&\quad\le C(\alpha, \eta )  \int_{U}  (|Du^\ez|^2+\kappa)^{    \alpha  } (|D\xi|^2 +|D^2\xi||\xi|)\,dx\nonumber\\
&\quad\quad-\int_{U}
 f^\ez \Delta u^\ez  (|Du^\ez|^2+\kappa)^{    \alpha-3}[2(|Du^\ez|^2+\kappa)+({ \alpha-2}) |Du^\ez|^2] \xi^2 \, dx.
\end{align*}
\end{cor}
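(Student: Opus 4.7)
The plan is to derive Corollary \ref{identity3} directly from the identity in Lemma \ref{identity2} by bounding its first three right-hand terms via Young's inequality and absorbing the resulting $|D^2u^\ez Du^\ez|^2$ contribution into the left-hand side.

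First I would drop the non-negative term $\int_U \langle Du^\ez, D\xi\rangle^2|Du^\ez|^2(|Du^\ez|^2+\kappa)^{\alpha-2}\,dx$ from the left of Lemma \ref{identity2}, turning the identity into an inequality. Abbreviate $C_1:=2(|Du^\ez|^2+\kappa)+(\alpha-2)|Du^\ez|^2=\alpha|Du^\ez|^2+2\kappa$ and $C_2:=(|Du^\ez|^2+\kappa)+(\alpha-2)|Du^\ez|^2=(\alpha-1)|Du^\ez|^2+\kappa$. Under $\alpha>0$ and $\kappa\ge 0$ one has $C_1\ge 0$, the pointwise domination $C_2\le C_1$, and the comparisons $|C_2|\le C(\alpha)C_1$ and $C_1\le C(\alpha)(|Du^\ez|^2+\kappa)$.

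Next I would bound each of the first three right-hand terms of Lemma \ref{identity2} pointwise. Writing $\Delta_\fz u^\ez=\langle D^2u^\ez Du^\ez, Du^\ez\rangle$ gives $|u^\ez_i\xi_i\Delta_\fz u^\ez\,\xi|\le|D\xi|\,|Du^\ez|^2\,|D^2u^\ez Du^\ez|\,|\xi|$; since $u^\ez_{ik}u^\ez_k$ is the $i$-th component of $D^2u^\ez Du^\ez$, one has $|u^\ez_{ik}u^\ez_k\xi_i\,\xi|\le|D\xi|\,|D^2u^\ez Du^\ez|\,|\xi|$; and $|\xi_{ik}u^\ez_iu^\ez_k\,\xi|\le|D^2\xi|\,|Du^\ez|^2\,|\xi|$. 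To each of the two Hessian-bearing terms I would apply Young's inequality $ab\le\eta a^2+C(\eta)b^2$ with $a:=(|Du^\ez|^2+\kappa)^{(\alpha-3)/2}\sqrt{C_1}\,|D^2u^\ez Du^\ez|\,|\xi|$, so that $\int_U a^2\,dx$ is precisely the Hessian term on the left of Lemma \ref{identity2}. The residual $b^2$ factors simplify, via $|C_2|^2/C_1\le C(\alpha)(|Du^\ez|^2+\kappa)$ and $|Du^\ez|^{2k}\le(|Du^\ez|^2+\kappa)^k$, to at most $C(\alpha,\eta)(|Du^\ez|^2+\kappa)^\alpha(|D\xi|^2+|D^2\xi||\xi|)$; the $D^2\xi$ term, which contains no Hessian factor, is bounded by the same quantity directly. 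The fourth (final) term of Lemma \ref{identity2}, the $f^\ez\Delta u^\ez$ integral, is carried through unchanged.

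Combining gives $I_1+I_2\le\eta I_1+C(\alpha,\eta)Q+R_4$, where $I_1,I_2$ are the two left-hand terms of Lemma \ref{identity2} remaining after the first step, $Q$ is the first integral on the right of Corollary \ref{identity3}, and $R_4$ is the $f^\ez\Delta u^\ez$ term. Absorbing $\eta I_1$ on the left and then replacing $C_1$ by $C_2$ on the left (legitimate because $|D^2u^\ez Du^\ez|^2(|Du^\ez|^2+\kappa)^{\alpha-3}\xi^2\ge 0$ and $C_2\le C_1$ pointwise, and similarly for the Laplacian term) yields exactly the statement of Corollary \ref{identity3}, the factor $(1-\eta)$ arising from Young's parameter. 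The main mild subtlety is the case $0<\alpha<1$ where $C_2$ can change sign, but this is handled by the comparison $|C_2|\le C(\alpha)C_1$, which keeps the Young bookkeeping honest regardless of the sign of $C_2$.
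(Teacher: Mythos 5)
Your proposal is correct and follows essentially the same route as the paper: drop the nonnegative $\langle Du^\ez,D\xi\rangle^2$ term, bound the three non-$f^\ez$ terms of Lemma~\ref{identity2} pointwise, apply Young with the natural weight, absorb the Hessian pieces into the left-hand side, and pass from the weight $\alpha|Du^\ez|^2+2\kappa$ to $(\alpha-1)|Du^\ez|^2+\kappa$ using their pointwise comparison. The only cosmetic difference is that the paper handles the $\Delta_\fz u^\ez$ term by splitting into the cases $\alpha\ge 2$ and $\alpha<2$, whereas you treat both at once via $|(\alpha-1)|Du^\ez|^2+\kappa|\le C(\alpha)(|Du^\ez|^2+\kappa)$, which is a minor unification of the same idea.
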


\begin{proof}
It suffices to estimate the first three terms in the right hand side of the identity given in Lemma \ref{identity2}.
Obviously,
$$-  \int_{U}\xi_{ik} u^{\ez}_k u^{\ez}_i |Du^\ez|^2 (|Du^\ez|^2+\kappa)^{   { \alpha-2}}\xi  \,dx\le
\int_U(|Du^\ez|^2+\kappa)^{  \alpha +1}|D^2\xi||\xi|\,dx.$$
By Young's inequality, we have
\begin{align*}
 &-2\int_{U} |Du^{\ez}| ^2 (|Du^\ez|^2+\kappa)^{   { \alpha-2}}  u^{\ez}_{ik}u^{\ez}_k\xi_i \xi \,dx\\
 &\quad\le
 \frac\eta2  \int_{U} |D^2u^\ez Du^\epsilon |^2   (|Du^\ez|^2+\kappa)^{ { \alpha-2}}    \xi^2 \,dx+C(\eta)
 \int_{U}   (|Du^\ez|^2+\kappa)^{ \alpha}    |D\xi|^2 \,dx.
 \end{align*}
 Write
 $$K=   -2\int_{U}  (|Du^\ez|^2+\kappa)^{    \alpha-3}[(|Du^\ez|^2+\kappa)+({ \alpha-2})|Du^{\ez}|  ^2]     u^{\ez}_i \xi_i  \Delta_\fz u^{\ez}  \xi \,dx.$$
If ${ \alpha }\ge2$, by Young's inequality, we have
\begin{align*}
 K
&\le 2\int_{U} |D^2u^\ez Du^\epsilon |    (|Du^\ez|^2+\kappa)^{   { \alpha-2}}[(|Du^\ez|^2+\kappa)+({ \alpha-2})|Du^{\ez}|  ^2]    | D \xi|  |\xi | \,dx\\
&\le \frac \eta2\int_{U}  |D^2u^\ez Du^\epsilon |^2  (|Du^\ez|^2+\kappa)^{    \alpha-3} [ (|Du^\ez|^2+\kappa)+({ \alpha-2}) |Du^\ez|^2]\xi^2 \, dx\\
&\quad +
C( \eta)({ \alpha-1})
 \int_{U}   (|Du^\ez|^2+\kappa)^{ \alpha}    |D\xi|^2 \,dx
\end{align*}
If $  \alpha <2$, by Young's inequality, we have
\begin{align*}
K
&\le 4\int_{U} |D^2u^\ez Du^\epsilon |    (|Du^\ez|^2+\kappa)^{   { \alpha-1}}    | D \xi|  |\xi| \,dx\\
&\le \frac \eta2\int_{U}  |D^2u^\ez Du^\epsilon |^2  (|Du^\ez|^2+\kappa)^{   { \alpha-2}}  \xi^2 \, dx +C( \eta)  \int_{U}   (|Du^\ez|^2+\kappa)^{ \alpha}    |D\xi|^2 \,dx
\end{align*}
as desired. \end{proof}

Now we prove Lemma \ref{DDuez} via Corollary \ref{identity3}.

\begin{proof}[Proof of Lemma \ref{DDuez}.]
We consider the four cases separately.

\medskip
{\it Proof of (i): Case $\alpha=2$.}
By Lemma \ref{identity3} with $\alpha=2$ and $ \kappa=0$, have
\begin{align*} &(2-\eta)\int_{U}  |D^2u^\ez Du^\epsilon |^2 \xi^2\, dx
+ 2\ez\int_{U}  |\Delta u^\epsilon |^2   \xi^4 \, dx
 \le C\int_{U}      |Du^\ez|^4 |D\xi|^2\,dx -2\int_{U}
 f^\ez \Delta u^\ez   \xi^2 \, dx.
\end{align*}
By integration by parts we have
\begin{align*}
 -2\int_{U}
 f^\ez \Delta u^\ez   \xi^2 \, dx
&=  \int_{U}      u^\ez_k [f^\ez   \xi^2]_k \, dx  =  2 \int_{U}  u^\ez_k f^\ez _k \xi^2  \, dx
+4  \int_{U}      u^\ez_k \xi_ k f^\ez \xi    \, dx.
\end{align*}
Observe that $\Delta_\fz u^\ez+\ez\Delta u^\ez=-f^\ez$ implies that
$$
 f^\ez  |Du^\ez| \le  |D^2u^\ez Du^\ez| |Du^\ez|^2+ 2\ez  |\Delta u^\ez||Du^\ez| . $$
We have
\begin{align*}
4\int_{U}      u^\ez_k \xi_ k f^\ez \xi    \, dx
&\le \int_U |D^2u^\ez Du^\ez| |Du^\ez|^2|D\xi|\xi \,dx + 2\ez \int_U |\Delta u^\ez||Du^\ez| |D\xi|\xi \,dx\\
&\le \eta \int_U |D^2u^\ez Du^\ez| ^2\xi^2 \,dx + C(\eta)\int_U  |Du^\ez|^4|D\xi|^2 \,dx \\
&\quad+ \eta\ez\int_U |\Delta u^\ez| ^2\xi^2 \,dx + C(\eta)\ez^2 \int_U    |D\xi|^2 \,dx.
\end{align*}
Therefore, we have
\begin{align*} & \int_{U}  |D^2u^\ez Du^\epsilon |^2 \xi^2\, dx
+  \ez\int_{U}  |\Delta u^\epsilon |^2   \xi^4 \, dx\\
 &\quad\le C\int_{U}      |Du^\ez|^4 |D\xi|^2\,dx +C\int_{U}
 f^\ez_i u^\ez_i  \xi^2 \, dx+C \ez^2 \int_U    |D\xi|^2 \,dx
\end{align*}
as desired.
\medskip

 {\it Proof of (ii): Case ${ \alpha}\ge3$. }
By Lemma \ref{identity3} with $  \alpha \ge3$ and $ \kappa=0$,
 we have
\begin{align*} &\alpha(1-\eta)\int_{U}  |D^2u^\ez Du^\epsilon |^2 |Du^\epsilon |^{2\alpha-4}\xi^4 \, dx
+ \alpha\ez\int_{U}  |D^2u^\ez Du^\epsilon |^2  |Du^\epsilon |^{2\alpha-4}  \xi^2 \, dx\\
&\quad\le  C(\eta)(\alpha-1) \int_{U}      |Du^\ez|^{ 2\alpha }|D\xi|^2 \, dx   -2\alpha\int_{U}
 f^\ez \Delta u^\ez   |Du^\epsilon |^{2\alpha-4} \xi^2 \, dx.
\end{align*}
By integration by parts we have
\begin{align*}
  &-2\alpha\int_{U}
 f^\ez \Delta u^\ez    |Du^\epsilon |^{2\alpha-4}  \xi^2 \, dx\\
&\quad=  2\alpha\int_{U}      u^\ez_k [f^\ez    |Du^\epsilon |^{2\alpha-4}  \xi^2]_k \, dx \\
&\quad =  2\alpha \int_{U}  u^\ez_k f^\ez _k   |Du^\epsilon |^{2\alpha-4} \xi^2  \, dx
+4 \alpha  \int_{U}      u^\ez_k \xi_ k f^\ez   |Du^\epsilon |^{2\alpha-4} \xi  \, dx\\
&\quad\quad+4\alpha(\alpha-2) \int_{U}    f^\ez \Delta_\fz u^\epsilon  |Du^\epsilon |^{2 \alpha-6}   \xi^2  \, dx.
\end{align*}
By Young's inequality, we have $$
4 \alpha  \int_{U}      u^\ez_k \xi_ k f^\ez   |Du^\epsilon |^{2\alpha-4} \xi  \, dx\le C(\eta) \alpha
\int_{U}      |Du^\epsilon |^{ 2\alpha } |D\xi|^2  \, dx+\eta \alpha \int_{U}      (f^\ez)^2|Du^\epsilon |^{2 \alpha-6} \xi^2  \, dx.$$
Applying $\Delta_\fz u^\epsilon=-\Delta  u^\epsilon-f^\ez$ we obtain
\begin{align*}
& 4\alpha(\alpha-2) \int_{U}    f^\ez \Delta_\fz u^\epsilon  |Du^\epsilon |^{2 \alpha-6}   \xi^2  \, dx\\
&\quad=-4\alpha(\alpha-2) \ez \int_{U}    f^\ez \Delta  u^\epsilon  |Du^\epsilon |^{2 \alpha-6}   \xi^2  \, dx- 4\alpha(\alpha-2)  \int_{U}    (f^\ez)^2    |Du^\epsilon |^{2 \alpha-6}   \xi^2  \, dx\\
&\quad\le C(\alpha)\ez \left [\int_{U}     (\Delta  u^\epsilon)^2       \xi^2  \, dx\right]^{1/2}  \left[\int_{U}     (f^\ez )^2 |Du^\epsilon |^{4 \alpha-12}      \xi^2  \, dx\right]^{1/2}\\
&\quad\quad\quad - 4\alpha(\alpha-2)  \int_{U}    (f^\ez)^2  |Du^\epsilon |^{2 \alpha-6}   \xi^2  \, dx.
\end{align*}
Combining all estimates together we arrive at the desired result.

\medskip

{\it Proof of (iii): Case $2< \alpha <3$.}
 By Lemma \ref{identity3} with $\kappa>0$ and $ \alpha >2$,   it suffices to estimate
 $$K=-\int_{U}
  f^\ez \Delta u^\ez  (|Du^\ez|^2+\kappa)^{    \alpha-3}[2(|Du^\ez|^2+\kappa)+({ \alpha-2}) |Du^\ez|^2] \xi^2 \, dx.$$
Via integration by parts, we write
\begin{align*}
K&=-\alpha\int_{U}
 f^\ez \Delta u^\ez  (|Du^\ez|^2+\kappa)^{   { \alpha-2}}  \xi^2 \, dx
 +({ \alpha-2})\kappa \int_{U}
 f^\ez \Delta u^\ez  (|Du^\ez|^2+\kappa)^{    \alpha-3}  \xi^2 \, dx\\
 &= \alpha \int_{U}
   u^\ez_i[ f^\ez  (|Du^\ez|^2+\kappa)^{   { \alpha-2}}  \xi^2]_i \, dx
 -\kappa ({ \alpha-2}) \int_{U}u^\ez_i[ f^\ez  (|Du^\ez|^2+\kappa)^{    \alpha-3}  \xi^2]_i \, dx\\
  &= \alpha \int_{U}
   u^\ez_i  f^\ez_i (|Du^\ez|^2+\kappa)^{   { \alpha-2}}  \xi^2  \, dx
 -\kappa ({ \alpha-2}) \int_{U}u^\ez_i  f^\ez _i (|Du^\ez|^2+\kappa)^{    \alpha-3}  \xi^2  \, dx\\
  &\quad+2\alpha \int_{U}
   u^\ez_i\xi_i  f^\ez  (|Du^\ez|^2+\kappa)^{   { \alpha-2}}  \xi  \, dx
 -2\kappa ({ \alpha-2}) \int_{U}u^\ez_i\xi_i f^\ez  (|Du^\ez|^2+\kappa)^{    \alpha-3}  \xi  \, dx\\
 &\quad+  2 \alpha(\alpha-2) \int_{U}
   \Delta_\fz u^\ez  f^\ez  (|Du^\ez|^2+\kappa)^{    \alpha-3}  \xi^2  \, dx\\
&\quad
 -2\kappa ({ \alpha-2}) ( \alpha-3)\int_{U}\Delta_\fz u^\ez   f^\ez  (|Du^\ez|^2+\kappa)^{    \alpha-4 }  \xi^2  \, dx\\
 &=K_1+\cdots+K_6.
\end{align*}

Notice that
$$ K_2\le
 C  \kappa^{{ \alpha-3/2}} \int_{U}
   |D  f^\ez|    \xi^2  \, dx$$
   and $$ K_4\le C \kappa^{{ \alpha-3/2}} \int_{U}
   |   f^\ez|  |D\xi|  |\xi |  \, dx.$$
By Young's inequality, we have
\begin{align*}
 K_3&\le C \int_{U}
   |D \xi|  f^\ez  (|Du^\ez|^2+\kappa)^{   { \alpha-3/2}}  |\xi | \, dx\\
 &\le C \int_{U}
   |D \xi| ^2    (|Du^\ez|^2+\kappa)^{   \alpha}   \, dx+ \eta \int_{U}
    ( f^\ez)^2  (|Du^\ez|^2+\kappa)^{     \alpha-3}  \xi^2  \, dx.
\end{align*}
Applying $\Delta_\fz u^\ez =-\ez\Delta u^\ez-f^\ez$, by $2< \alpha <3$ we further obtain
\begin{align*}
K_5+K_6&=-2 \alpha(\alpha-2)\ez  \int_{U}
   \Delta  u^\ez  f^\ez  (|Du^\ez|^2+\kappa)^{    \alpha-3}  \xi^2  \, dx\\
   &\quad
+2\kappa ({ \alpha-2}) ( \alpha-3)\ez \int_{U}\Delta  u^\ez   f^\ez  (|Du^\ez|^2+\kappa)^{    \alpha-4 }  \xi^2  \, dx\\
& \quad -2 \alpha(\alpha-2)   \int_{U}
   ( f^\ez)^2  (|Du^\ez|^2+\kappa)^{    \alpha-3}  \xi^2  \, dx\\
   &\quad
+2\kappa ({ \alpha-2}) ( \alpha-3) \int_{U}   ( f^\ez)^2  (|Du^\ez|^2+\kappa)^{    \alpha-4 }  \xi^2  \, dx\\
&\le  C \ez  \left[\int_{U}
  ( \Delta  u^\ez )^2\xi^2\,dx\right]^{1/2 }\left[\int_U( f^\ez)^2  (|Du^\ez|^2+\kappa)^{   2 \alpha-6}  \xi^2  \, dx\right]^{1/2} \\
& \quad -2 \alpha(\alpha-2)   \int_{U}
   ( f^\ez)^2  (|Du^\ez|^2+\kappa)^{    \alpha-3}  \xi^2  \, dx.
\end{align*}
Combining all estimates together we arrive at the desired result.

\medskip
{\it Proof of (iv): Case $3/2<  \alpha <2$.}
By Lemma \ref{identity3} with $\kappa>0$ and $3/2<  \alpha<2$, it suffices to estimate
the term $K$ as in the (iii).
Write $K=K_1+\cdots+K_6$ as in (iii).
The estimates for $K_2,\cdots,K_4$ are the same as there.
For $K_5$ and $K_6$, applying $f^\ez =-\Delta_\fz u^\ez-\ez\Delta u^\ez$ we have
\begin{align*}
K_5+K_6
&=- 2\alpha(\alpha-2) \int_{U}
   (\Delta_\fz  u^\ez)^2    (|Du^\ez|^2+\kappa)^{    \alpha-3}  \xi^2  \, dx\\
   &\quad - 2\alpha(\alpha-2) \ez \int_{U}
    \Delta_\fz u^\ez \Delta u^\ez    (|Du^\ez|^2+\kappa)^{    \alpha-3}  \xi^2  \, dx\\
   &\quad +2\kappa ({ \alpha-2}) ( \alpha-3)\int_{U} (\Delta_\fz u^\ez)^2     (|Du^\ez|^2+\kappa)^{    \alpha-4 }  \xi^2  \, dx\\
   &\quad +2\kappa ({ \alpha-2}) ( \alpha-3)\ez\int_{U}  \Delta_\fz u^\ez  \Delta u^\ez    (|Du^\ez|^2+\kappa)^{    \alpha-4 }  \xi^2  \, dx\\
   &=K_{5,1}+K_{5,2}+K_{6,1}+K_{6,2}
\end{align*}
Note that
\begin{align*}
   K_{5,2}+K_{6,2}
   &\le C\ez \kappa^{ \alpha-5/2}\left[\int_{U}
      (\Delta u^\ez)^2\,dx\right]^{1/2}  \left[\int_{U}
    |D^2u^\ez Du^\ez|^2 \,dx\right]^{1/2}
\end{align*}

Moreover by
\begin{align*}
&-K_{5,1}-K_{6,1}+\int_{U}  |D^2u^\ez Du^\epsilon |^2  (|Du^\ez|^2+\kappa)^{    \alpha-3} [2(|Du^\ez|^2+\kappa)+({ \alpha-2}) |Du^\ez|^2]\xi^2 \, dx \\
&\quad\ge \int_{U}  |D^2u^\ez Du^\epsilon |^2  (|Du^\ez|^2+\kappa)^{    \alpha-3} [2(|Du^\ez|^2+\kappa)+({ \alpha-2}) |Du^\ez|^2]\xi^2 \, dx \\
&\quad\quad-  2\alpha(2-\alpha) \int_{U} |D^2u^\ez D u^\ez|^2 |Du^\ez|^2
    (|Du^\ez|^2+\kappa)^{    \alpha-3}  \xi^2  \, dx\\
   &\quad\quad -2\kappa   \alpha   (1-{ \alpha })\int_{U} |D^2u^\ez D u^\ez|^2 |Du^\ez|^2      (|Du^\ez|^2+\kappa)^{    \alpha-4 }  \xi^2  \, dx\\
   &\quad\ge \alpha(2\alpha-3 )\int_{U} |D^2u^\ez D u^\ez|^2 |Du^\ez|^2
    (|Du^\ez|^2+\kappa)^{    \alpha-3}  \xi^2  \, dx\\
    &\quad\quad  +2\kappa[ 1- \alpha (1-  \alpha ) ]\int_{U} |D^2u^\ez D u^\ez|^2       (|Du^\ez|^2+\kappa)^{    \alpha-3}  \xi^2  \, dx\\
    &\quad\ge \min\{\alpha(2\alpha-3 ),\frac12\} \int_{U} |D^2u^\ez D u^\ez|^2
    (|Du^\ez|^2+\kappa)^{   { \alpha-2}}  \xi^2  \, dx.
\end{align*}

Notice that $3/2<{ \alpha }<2$ implies that
$$\alpha(2\alpha-3 )> ( 2\alpha-3) \quad {\rm and}\quad 2 [ 1-   \alpha  (2-{ \alpha }) ]\ge \frac14 .$$
Letting $\eta <\frac1{16}(2\alpha-3)<\frac1{16} $, combining all estimates together we obtain the desired result.
\end{proof}

\begin{proof}[Proof of Lemma \ref{Duez}.]
Letting ${ \alpha }=2$ and $\kappa=0$ and replacing $\xi$ by $u^\ez \xi^3$ in Lemma \ref{identity2}
we have

\begin{align*} &
2 \int_{U}  |D^2u^\ez Du^\epsilon |^2   (u^\ez \xi^3)^2 \, dx
+ 2\ez\int_{U}  | \Delta u^\epsilon |^2  (u^\ez \xi^3)^2 \, dx
+\int_{U}\langle Du^{\ez}, D(u^\ez \xi^3)\rangle ^2|Du^\ez|^2  \,dx\\
&= -2\int_{U}      u^{\ez}_i (u^\ez\xi^3)_i  \Delta_\fz u^{\ez}  u^\ez \xi^3 \,dx   -  \int_{U}(u^\ez\xi^3)_{ik} u^{\ez}_k u^{\ez}_i |Du^\ez|^2 u^\ez  \xi^3  \,dx\nonumber\\
&\quad-2\int_{U} |Du^{\ez}| ^2   u^{\ez}_{ik}u^{\ez}_k (u^\ez \xi^3)_i u^\ez \xi^3 \,dx  -\int_{U}
 f^\ez \Delta u^\ez    (u^\ez \xi^3)^2 \, dx.
\end{align*}
With a slight calculation, it can be  further writen  as
\begin{align*} &2\int_{U}  |D^2u^\ez Du^\epsilon |^2   (u^\ez)^{2 }\xi^{6} \, dx
+ 2\ez\int_{U}  | \Delta u^\epsilon |^2    (u^\ez)^{2 }\xi^{6}  \, dx\\
&\quad + \int_{U}   |Du^\ez|^{   6}  \xi^6\,dx+15\int_{U} \langle Du^{\ez}, D \xi \rangle^2 |Du^\ez|^{2}   (u^\ez)^{2 }  \xi^4 \,dx\\
&= -  5\int_{U}   |Du^\ez| ^{  2  }
  \Delta_\fz u^{\ez}    u^\ez \xi^6 \,dx -6\int_{U}      u^{\ez}_i\xi_i \Delta_\fz u^{\ez}   (u^\ez)^{2 }\xi^{5}  \,dx\\
&\quad-12
 \int_{U} u^\ez_i\xi_i |Du^\ez| ^{ 4} u^\ez\xi^{5} \,dx  -  3 \int_{U}\xi_{ik} u^{\ez}_k u^{\ez}_i |Du^\ez|^ {  2} (u^\ez)^{2 }\xi^5  \,dx\nonumber\\
 &\quad-6 \int_{U}  |Du^\ez| ^{ 2    }  u^{\ez}_{ik}u^{\ez}_k \xi _i  (u^\ez)^{2 }\xi^5 \,dx -2\int_{U}
 f^\ez \Delta u^\ez     (u^\ez)^{2 }\xi^6  \, dx
\end{align*}

For the first two terms, we have
 \begin{align*}
&   - 5 \int_{U}   \Delta_\fz u^{\ez}  |Du^\ez|^{2 } u^\ez\xi^6 \,dx
   -6\int_{U}       u^{\ez}_i  \xi_i \Delta_\fz u^{\ez} (u^\ez )^2\xi^5 \,dx\\
   &\quad=    - 5\ez  \int_{U}   \Delta  u^{\ez}  |Du^\ez|^{2  } u^\ez\xi^6 \,dx
   -6\ez \int_{U}      u^{\ez}_i   \xi _i \Delta u^{\ez} (u^\ez )^2\xi^5\,dx\\
   &\quad \quad+ 5 \int_{U}   f^{\ez}  |Du^\ez|^{2 } u^\ez\xi^6 \,dx
   +6  \int_{U}      u^{\ez}_i   \xi _i  f^{\ez} (u^\ez )^2\xi^5\,dx\\
&\quad\le      \frac1{16} \ez \int_{U}( \Delta  u^{\ez})^2  (u^\ez)^2  \xi^6  \,dx+
C   \ez \int_{U} |Du^\ez|^4  \xi^6\,dx   +C   \ez \int_{U} |Du^\ez|^{2  }( u^\ez)^2 \xi^4|D\xi|^2 \,dx\\
&\quad\quad  + \frac1{16} \int_{U} |Du^\ez|^6  \xi^6 \,dx +
C
\int_{U} (f u^\ez )^{3/2}\xi^6 \,dx+
C
\int_{U} [f (u^\ez)^2 |D\xi|]^{6/5}\xi^6 \,dx\\
&\quad\le   \frac18\ez \int_{U}( \Delta  u^{\ez})^2   (u^\ez)^2  \xi^6   \,dx+  \frac18 \int_{U} |Du^\ez|^6   \xi^6\,dx\\
&\quad\quad +
C
\int_{U} \xi^6 (f u^\ez )^{3} \,dx+
C
\int_{U}    (u^\ez) ^{6 } |D\xi| ^{6 } \,dx\\
&\quad\quad+C   \ez^{3} \int_{U}   \xi^6 \,dx +C   \ez^{3} \int_{U} |D\xi|^3 ( u^\ez)^3 |\xi|^3 \,dx.
    \end{align*}

By Young's inequality, we have
 \begin{align*}
&-12
 \int_{U} u^\ez_i\xi_i |Du^\ez| ^4 u^\ez\xi^5 \,dx - 3\int_{U}\xi_{ik} u^{\ez}_k u^{\ez}_i |Du^\ez|^ {  2  } (u^\ez)^{2 }\xi^5  \,dx\nonumber\\
 &\quad\le     \frac18 \int_{U} |Du^\ez|^6 \xi^6\,dx+ C
\int_{U}[| D\xi|^2+|D^2\xi||\xi|]^3(u^\ez )^6 \,dx.
 \end{align*}
Similarly,
 \begin{align*}
  & -6 \int_{U} |Du^{\ez}| ^{2 } u^{\ez}_{ik}u^{\ez}_k \xi _i (u^\ez)^2\xi^5 \,dx\\
&\quad\le \frac18 \int_{U}   |D^ 2u^{\ez} D u^{\ez}|^2 (u^\ez)^2\xi^6 \,dx+
 C \int_{U} |Du^{\ez}| ^4  (u^\ez)^2 |D\xi|^2\xi^4 \,dx\\
&\quad\le \frac18 \int_{U}   |D^ 2u^{\ez} D u^{\ez}|^2 (u^\ez)^2\xi^6 \,dx+
\frac18 \int_{U} |Du^{\ez}| ^6  \xi^6 \,dx+
 C  \int_{U}    (u^\ez)^6 |D\xi|^6  \,dx.
      \end{align*}

Finally,
by integration by parts we obtain
\begin{align*}
   -&\int_{U}
 f^\ez \Delta u^\ez    (u^\ez\xi^3)^2 \, dx\\
 &=   \int_{U}
    u^\ez _i   [f^\ez (u^\ez\xi^3)^2]_i\, dx\\
    &=    \int_{U}
    u^\ez _i   f^\ez_i (u^\ez\xi^3)^2  \, dx
        +2\int_{U}
    |D u^\ez |^2   f^\ez     u^\ez \xi ^{6}  \, dx +  6\int_{U}
  u^\ez _i      f^\ez    (u^\ez)^2 \xi_i \xi^{5}  \, dx\\
  &\le \int_{U}
    |D u^\ez|    |Df^\ez|  (u^\ez\xi^3)^2  \, dx+\frac18   \int_U |D u^\ez|^6\xi^6\,dx\\
&\quad+C \int_U (f^\ez u^\ez )^{3/2}  \xi^6\,dx+  C  \int_U  (f^\ez)^{6/5}    (u^\ez)^{12/5}|D\xi|^{6/5}|\xi|^{24/5}\,dx\\
  &\le\frac18  \int_U |D u^\ez|^6\xi^6\,dx+ \int_{U}
    |D u^\ez|    |Df^\ez|  (u^\ez\xi^3)^2  \, dx\\
&\quad+C \int_U (f^\ez u^\ez )^{3/2}   \xi^6\,dx+ C \int_U   (u^\ez)^{6} |D\xi|^6\,dx.
\end{align*}
Combining all estimates together with we obtain  the desired result.
\end{proof}

\begin{proof}[Proof of Lemma \ref{flat}.] Without loss of generality, we may assume that
$P(x)= cx_2$. Then  $ |c|= |DP|$, $DP=c{\bf e}_2$ and $\langle Du^\epsilon  , DP\rangle= cu^\epsilon _2$.
 Replacing  $\xi$ by $(u-cx_2)\xi$ in Lemma \ref{identity2}, letting ${ \alpha }=4$ and $\kappa=0$ we have
\begin{align*} &4\int_{U}  |D^2u^\ez Du^\epsilon |^2 |Du^\ez|^4  (u-cx_2)^2\xi^2 \, dx
+ 4\ez\int_{U}  |\Delta u^\ez  |^2    |Du^\ez|^4  (u-cx_2)^2\xi^2 \, dx\\
&\quad+\int_{U}\langle Du^{\ez}, D[(u-cx_2)\xi]\rangle ^2|Du^\ez|^6  \,dx\\
&= -4\int_{U}    |Du^{\ez}|  ^4      u^{\ez}_i [(u-cx_2)\xi]_i  \Delta_\fz u^{\ez} (u-cx_2)\xi \,dx\\
&\quad   -  \int_{U}[(u-cx_2)\xi]_{ik} u^{\ez}_k u^{\ez}_i |Du^\ez|^6 (u-cx_2)\xi \,dx\nonumber\\
&\quad-2\int_{U} |Du^{\ez}| ^6 u^{\ez}_{ik}u^{\ez}_k[(u-cx_2)\xi]_i (u-cx_2)\xi \,dx\nonumber \\
&\quad -\int_{U}
 f^\ez \Delta u^\ez   |Du^\ez|^4  [(u-cx_2)\xi]^2 \, dx\\
&=J_1+\cdots +J_4.
\end{align*}

Firstly, we note that
 \begin{align*}
&\int_{U}\langle Du^{\ez}, D[(u-cx_2)\xi]\rangle ^2|Du^\ez|^6  \,dx\\
%&=\int_{U}\langle Du^{\ez}, [D  u-DP]\xi+(u-P)\xi \rangle ^2|Du^\ez|^6  \,dx \\
&\quad=\int_{U}[\langle Du^{\ez},  D  u-DP  \rangle ^2]\xi ^2+
2 \langle Du^{\ez},  D  u-DP  \rangle \langle Du^{\ez},  D  \xi  \rangle\\
&\quad\quad\quad+ \langle Du^{\ez},  D \xi  \rangle ^2(u-cx_2)^2] |Du^\ez|^6 \,dx \\
&\quad\ge \frac12\int_{U}\langle Du^{\ez},  D  u-DP  \rangle ^2|Du^\ez|^6 \xi ^2 \,dx-
\int_{U}\langle Du^{\ez},  D\xi  \rangle ^2|Du^\ez|^6 (u^\ez-cx_2)^2 \,dx\\
&\quad\ge \frac12\int_{U}\langle Du^{\ez},  D  u-DP  \rangle ^2|Du^\ez|^6 \xi ^2 \,dx-
\int_{U}| D\xi|^2|Du^\ez|^8 (u^\ez-cx_2)^2 \,dx.
\end{align*}

 Next, we have
 \begin{align*}
J_1+J_3
&= -4\int_{U}    |Du^{\ez}|  ^4       u^{\ez}_i  (u-cx_2)_i \Delta_\fz u^{\ez} (u^\ez-cx_2)\xi^2 \,dx\\
 &\quad-4\int_{U}    |Du^{\ez}|  ^4       u^{\ez}_i  \xi_i \Delta_\fz u^{\ez} (u^\ez-cx_2)^2\xi \,dx\\
 &\quad
 -2\int_{U} |Du^{\ez}| ^6 u^{\ez}_{ik}u^{\ez}_k (u-cx_2)_i  (u-cx_2)\xi^2 \,dx\\
 &\quad-2\int_{U} |Du^{\ez}| ^6 u^{\ez}_{ik}u^{\ez}_k \xi _i (u-cx_2)^2\xi \,dx\\
& \le\left[\int_{U}   |D^2u^\ez Du^\ez|^2     \xi^2 \,dx\right]^{1/2}
\left[\int_{U}    |Du^{\ez}|  ^{12}       |Du-DP|^2    (u^\ez-cx_2)^2\xi^2 \,dx\right]^{1/2}\\
&\quad  + \int_{U}   |D^2u^\ez Du^\ez|^2 |Du^{\ez}|  ^4  (u^\ez-cx_2)^2  \xi^2 \,dx+
C\int_{U}    |Du^{\ez}|  ^{8}        |D\xi|^2    (u^\ez-cx_2)^2  \,dx
\end{align*}

We also have
 \begin{align*}
 J_2
 &=   -  \int_{U}  u^\ez  _{ik} u^{\ez}_k u^{\ez}_i |Du^\ez|^6 (u-cx_2)\xi^2 \,dx -2\int_{U}  (u^\ez-cx_2)  _{i }\xi_k u^{\ez}_k u^{\ez}_i |Du^\ez|^6 (u-cx_2)\xi \,dx\\
 &\quad -  \int_{U}  \xi  _{ik} u^{\ez}_k u^{\ez}_i |Du^\ez|^6 (u-cx_2)^2\xi  \,dx\nonumber \\
 &\le \left[\int_{U}   |D^2u^\ez Du^\ez|^2    \xi^2 \,dx \right]^{1/2}
\left[ \int_{U}    |Du^{\ez}|  ^{12}       |Du-DP|^2    (u^\ez-cx_2)^2\xi^2 \,dx\right]^{1/2}\\
 &\quad + \frac18\int_{U}   \langle Du^\ez, D u^\ez-DP\rangle^2 |Du^{\ez}|  ^6   \xi^2 \,dx\\
 &\quad
  +
 \int_{U}    |Du^{\ez}|  ^8        (|D\xi|^2 +|D^2\xi||\xi|)  (u^\ez-cx_2)^2  \,dx.
 \end{align*}
By integration by parts, we obtain
\begin{align*}
J_4
 &=   \int_{U}
    u^\ez _i  [f^\ez|Du^\ez|^4   (u-cx_2)^2\xi ^2 ]_i\, dx\\
    &=    2\int_{U}
    \Delta_\fz u^\ez    f^\ez|Du^\ez|^2   (u-cx_2)^2\xi ^2  \, dx+\int_{U}
    u^\ez _i   f^\ez_i|Du^\ez|^4   (u-cx_2)^2\xi ^2  \, dx  \\
      &\quad  +2\int_{U}
    u^\ez _i   f^\ez |Du^\ez|^4 (u-cx_2)_i  (u-cx_2) \xi ^2  \, dx +  2\int_{U}
  u^\ez _i      f^\ez|Du^\ez|^4  (u-cx_2)^2\xi _i\xi  \, dx\\
  &\le  \int_{U}   |D^2u^\ez Du^\ez|^2 |Du^{\ez}|  ^4  (u^\ez-cx_2)^2  \xi^2 \,dx+ \frac18\int_{U}   \langle Du^\ez, D u^\ez-DP\rangle^2 |Du^{\ez}|  ^6   \xi^2 \,dx\\
  &\quad +
C\int_{U}      |Du^{\ez}|  ^2 (f^\ez)^2         \xi^2    (u^\ez-cx_2)^2  \,dx +C\int_{U}         |Du^{\ez}|  ^8     |D\xi|^2     (u^\ez-cx_2)^2  \,dx\\
&\quad
+ C\int_{U}         |Du^{\ez}|  ^4     f^\ez_i u^\ez_i \xi^2     (u^\ez-cx_2)^2  \,dx.
\end{align*}

Combining all estimates together we arrive at
\begin{align*}
& \int_{U}\langle Du^{\ez}, D[(u-cx_2)\xi]\rangle ^2|Du^\ez|^6  \,dx\\
& \quad\le \left[ \int_{U}   |D^2u^\ez Du^\ez|^2   \xi^2 \,dx \right]^{1/2}
\left[ \int_{U}    |Du^{\ez}|  ^{12}       |Du-DP|^2    (u^\ez-cx_2)^2\xi^2 \,dx\right]^{1/2}\\
&\quad\quad+C  \int_{U}   [|Du^{\ez}|  ^8(|D\xi|^2 +|D^2\xi||\xi|) +|Du^{\ez}|  ^2 (f^\ez)^2\xi^2]   (u^\ez-cx_2)^2  \,dx\\
&\quad\quad+C \left| \int_{U}  |Du^{\ez}|  ^4  f^\ez_i u^\ez_i\xi   \    (u^\ez-cx_2)^2  \,dx\right|
 \end{align*}
 as desired.
\end{proof}

\bigskip
 \noindent {\bf Acknowledgment.} H. Koch and Y. Zhang have been
 partially supported by the Hausdorff Center for Mathematics.  Y. Zhou
 would like to thank the supports of von Humboldt Foundation, and
 National Natural Science of Foundation of China (No. 11522102).

\end{document}